\theoremstyle{definition}
\newtheorem{defi}{Definition}[section]
\newtheorem{remark}[defi]{Remark}
\theoremstyle{plain}
\newtheorem{theorem}[defi]{Theorem}
 \newtheorem{prop}[defi]{Proposition}
\newtheorem{lemma}[defi]{Lemma}
\newtheorem{cor}[defi]{Corollary}
\newtheorem*{conjecture}{Conjecture}
\let\TagsLeftOn\tagsleft@true
\let\TagsLeftOff\tagsleft@false
\newcommand{\B}{\mathbb B}
\newcommand{\R}{\mathbb R}
\newcommand{\N}{\mathbb N}
\newcommand{\T}{\mathbb{T}}
\newcommand{\ukn}{u_k^{(\eps)}}
\newcommand{\de}{\, \mathrm{d}}
\newcommand{\del}{\partial}
\newcommand{\Vol}{\operatorname{Vol}}
\newcommand{\Ukn}{U_k^{(\eps)}}
\newcommand{\CG}{\mathcal{G}}
\newcommand{\CH}{\mathcal{H}}
\newcommand{\CN}{\mathcal{N}}
\newcommand{\BT}{\mathbf T}
\newcommand {\RH}{\mathrm H}
\newcommand {\RL}{\mathrm L}
\newcommand{\RB}{\mathrm B}
\newcommand{\RV}{\mathrm V}
\newcommand{\RBV}{\RB\RV}
\newcommand{\RC}{\mathrm C}
\newcommand{\BS}{\mathbf S}
\newcommand{\BV}{\mathbf V}
\newcommand{\RP}{\mathbb{RP}}
\newcommand{\abs}[1]{\left\lvert #1 \right\rvert}
\newcommand{\set}[1]{\left\{ #1 \right\}}
\newcommand{\norm}[1]{\left\| #1 \right\|}
\newcommand{\bo}\boldsymbol{}
\newcommand{\bigo}[2][]{O_{#1}\left( #2 \right)}
\newcommand{\smallo}[2][]{o_{#1}\left( #2 \right)}
\DeclareMathOperator{\dist}{dist}
\DeclareMathOperator{\inj}{inj}
\DeclareMathOperator{\diam}{diam}
\newcommand{\ceil}[1]{\left\lceil#1\right\rceil}
\renewcommand{\hat}{\widehat}
\renewcommand{\tilde}{\widetilde}
\newcommand{\eps}{\varepsilon}
\renewcommand{\phi}{\varphi}
\renewcommand{\S}{\mathbb S}
\title[Large Steklov eigenvalues]{
  Large Steklov eigenvalues via homogenisation on
  manifolds}
\author{Alexandre Girouard}
\address{D\'epartement de math\'ematiques et de statistique, Pavillon Alexeandre-Vachon, Universit\'e Laval, Qu\'ebec, QC, G1V 0A6, Canada}
\email{alexandre.girouard@mat.ulaval.ca}
\author{Jean Lagac\'e}
\address{Department of Mathematics, University College London, Gower Street, London, WC1E 6BT, United Kingdom}
\email{j.lagace@ucl.ac.uk}
\begin{document}
\begin{abstract}
Using methods in the spirit of deterministic homogenisation theory
 we obtain convergence of the Steklov eigenvalues of a sequence of domains in a Riemannian manifold to weighted Laplace eigenvalues of that manifold. The
domains are obtained by removing small geodesic balls that are asymptotically
densely
uniformly distributed as their radius tends to zero. We use this relationship to
construct manifolds that have large Steklov eigenvalues.
  
In dimension two, and with constant weight equal to $1$, we prove that
Kokarev's upper bound of $8\pi$ for the first nonzero normalised Steklov
eigenvalue on orientable surfaces of genus 0 is saturated. For other topological
types and eigenvalue indices, we also obtain
  lower bounds on the best upper bound for the eigenvalue in terms of Laplace
  maximisers. For the first two eigenvalues, these lower bounds become
  equalities.
  A surprising consequence is the existence of free boundary minimal surfaces
  immersed in the unit ball by first Steklov eigenfunctions and with area
  strictly larger than $2\pi$. This was previously thought to be impossible. 
	We provide numerical evidence that some of the already known examples of
    free boundary minimal surfaces have these
    properties and also exhibit simulations of new free boundary minimal
    surfaces of genus zero in the unit ball with even larger area. 
	We prove that the first nonzero Steklov eigenvalue of all these examples is equal to
    1, as a consequence of their symmetries and topology,
    so that they verify a general conjecture by Fraser and Li.
  
  In dimension three and larger, we prove that the isoperimetric inequality of
  Colbois--El Soufi--Girouard is sharp and implies an upper bound for weighted
  Laplace eigenvalues. We also show that in any manifold
  with a fixed metric, one can construct by varying the weight a domain with
  connected boundary whose
  first nonzero normalised Steklov eigenvalue is arbitrarily large.
\end{abstract}

\maketitle

\section{Introduction and main results}

\subsection{The Laplace and Steklov eigenvalue problems}

Let $(M,g)$ be a smooth, closed connected Riemannian manifold of dimension $d\geq 2$ and let
$\Omega \subset M$ be a domain with smooth boundary $\del \Omega$. Let $\beta\in
\RC^\infty(M)$ be a smooth positive function. We study the weighted Laplace
eigenvalue problem
\begin{equation}
  \label{prob:laplace}
  -\Delta \phi = \lambda\beta \phi \qquad \text{in } M
\end{equation}
and the Steklov eigenvalue problem
\begin{equation}
  \label{prob:steklov}
  \begin{cases}
    \Delta u = 0 & \text{in } \Omega;\\
    \del_\nu u = \sigma u & \text{on } \del \Omega;
  \end{cases}
\end{equation}
where $\Delta$ is the Laplace operator and $\nu$ is the outwards unit
normal. The spectra of the Laplace and Steklov problems are discrete and their
eigenvalues form sequences
\begin{equation}
  0 = \lambda_0 < \lambda_1(M,g,\beta) \le \lambda_2(M,g,\beta) \le \dotso \nearrow
  \infty
\end{equation}
and
\begin{equation}
  0 = \sigma_0 < \sigma_1(\Omega,g) \le \sigma_2(\Omega,g) \le  \dotso
  \nearrow \infty
\end{equation}
accumulating only at infinity. Problem~\eqref{prob:laplace} is a staple of
geometric spectral theory, see e.g.~\cite{bergergauduchonmazet,chaveleigriem}.
The eigenvalues $\lambda_k(M,g,\beta)$ correspond to natural frequencies of a
membrane that is non-homogeneous when $\beta$ is not constant. It has recently
been studied by Colbois--El Soufi~\cite{ColboisElSoufi2019} and Colbois--El
Soufi--Savo~\cite{CeSS} in the Riemannian setting.
Problem~\eqref{prob:steklov} is a classical problem originating in mathematical physics \cite{stek1902}
and which has received growing attention in the last few
years. Its eigenvalues are those of the Dirichlet-to-Neumann operator, which maps a
function $f$ on $\del \Omega$ to the normal
derivative on the boundary of its harmonic extension.  See~\cite{gpsurvey} for a survey.

Our main theorem states that for any positive $\beta \in \RC^\infty(M)$, 
Problem~\eqref{prob:laplace} may be realized as a limit of Problem~\eqref{prob:steklov} defined
on carefully constructed domains $\Omega^\eps \subset M$. Denote by $\de
\mu_g$ the Lebesgue measure on $M$ and for every domain $\Omega \subset M$ by $\de A_{\del \Omega}$ the measure on
$M$ defined by integration against the Hausdorff measure on $\del \Omega$. 
\begin{theorem}
  \label{thm:approx}
  Let $(M,g)$ be a closed Riemannian manifold, and  $\beta \in \RC^\infty(M)$ positive. There is
  a sequence of domains $\Omega^\eps \subset M$
  such  that $\de A_{\del \Omega^\eps}$ converges weak-$*$ to $\beta \de \mu_g$  and
  \begin{equation}
  \sigma_k(\Omega^\eps,g)\xrightarrow{\eps \to 0}\lambda_k(M,g,\beta).
\end{equation}
\end{theorem}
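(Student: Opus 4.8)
The plan is to construct $\Omega^\eps$ by removing from $M$ a large number $N(\eps) \sim \eps^{-d}$ of small geodesic balls of radius $r(\eps) \ll \eps$, centred at points that become equidistributed with respect to the measure $\beta \, \de\mu_g$. The radius scaling must be chosen so that the total boundary area of the removed balls, which is of order $N(\eps) \, r(\eps)^{d-1}$, converges to $\int_M \beta \, \de\mu_g$, while at the same time the balls are small enough (capacity-wise) that they do not obstruct $H^1$-convergence. In dimension $d \ge 3$ this forces $r(\eps)$ to go to zero faster than $\eps$, and one distributes roughly $c\beta(x)\eps^{-d}$ centres per unit volume near $x$; in dimension $d = 2$ the balls must shrink exponentially in $\eps^{-1}$. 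The variational (min-max) characterisation of both spectra then reduces everything to comparing Rayleigh quotients.

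**Key steps in order.** First, I would make precise the notion of an admissible configuration of centres: a maximal $\eps$-separated net, refined so that the empirical measure $\frac{1}{N}\sum \delta_{x_i}$, weighted appropriately, converges weak-$*$ to $\beta\,\de\mu_g/\int\beta$, and choose $r(\eps)$ so that $\de A_{\del\Omega^\eps} \rightharpoonup \beta\,\de\mu_g$; this last convergence is essentially a Riemann-sum statement, using that on scale $r(\eps)$ the metric is Euclidean to leading order so each removed sphere has area $(1+o(1))\omega_{d-1} r(\eps)^{d-1}$. Second, I would set up the comparison of the Steklov Rayleigh quotient $\int_{\Omega^\eps}|\nabla u|^2 / \int_{\del\Omega^\eps} u^2$ with the weighted Laplace quotient $\int_M |\nabla\phi|^2 / \int_M \beta\phi^2$. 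For the upper bound on $\sigma_k(\Omega^\eps)$, I would take the first $k+1$ Laplace eigenfunctions $\phi_0,\dots,\phi_k$, restrict them to $\Omega^\eps$ as test functions, and show that the Dirichlet energy is unchanged to leading order (the balls are too small to see) while $\int_{\del\Omega^\eps}\phi_i\phi_j \to \int_M \beta\,\phi_i\phi_j$ by the weak-$*$ convergence already established; min-max then gives $\limsup \sigma_k(\Omega^\eps) \le \lambda_k$. Third, for the lower bound I would take a Steklov eigenfunction $\ukn$ for $\sigma_k(\Omega^\eps)$, normalised in $L^2(\de A_{\del\Omega^\eps})$, extend it harmonically (or by a controlled cutoff) across the removed balls to get $\tilde u_k^{(\eps)} \in H^1(M)$ with $\int_M|\nabla \tilde u_k^{(\eps)}|^2 \le (1+o(1))\int_{\Omega^\eps}|\nabla\ukn|^2$ — this is where the capacitary smallness of the balls enters, via the standard estimate that a single ball of radius $r$ contributes $O(r^{d-2})$ (resp. $O(1/\log r)$ in 2D) to the corrected Dirichlet energy, summed against $N(\eps)$ balls. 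Then I would argue the extended functions stay bounded in $H^1(M)$, extract a weakly convergent subsequence, and show the limit satisfies $\int_M|\nabla v_i|^2 \le \liminf\sigma_k \cdot \int_M\beta\,v_iv_j$ on a $(k+1)$-dimensional space, which by min-max for \eqref{prob:laplace} forces $\liminf\sigma_k(\Omega^\eps) \ge \lambda_k$. The orthogonality of the limiting functions in $L^2(\beta\,\de\mu_g)$ requires showing $\int_{\del\Omega^\eps}\ukn u_\ell^{(\eps)} \to \int_M \beta\, v_k v_\ell$, which again follows from the weak-$*$ convergence of $\de A_{\del\Omega^\eps}$ together with strong $L^2(M)$-convergence of the extensions.

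**The main obstacle.** The hard part is the lower bound, specifically the quantitative control of the harmonic extension across the holes: I must show that replacing $\ukn$ on each removed ball by its harmonic extension (with its own boundary values on the sphere) costs only $o(1)$ in Dirichlet energy \emph{in aggregate}, despite there being $\sim\eps^{-d}$ holes. This is a homogenisation-type estimate and is delicate because the natural per-hole bound involves the average of $|\nabla\ukn|^2$ over an annulus around each hole, and one needs that these local energies sum to something controlled by the global Dirichlet energy — i.e. that the eigenfunction does not concentrate near the holes. I would handle this by a mesoscale decomposition: partition $M$ into cells of size $\eps$, each containing one hole, use a trace/Poincaré inequality on the annulus between the hole of radius $r(\eps)$ and the cell boundary to bound the extension energy in each cell by $C\,(\text{something} \to 0)$ times the cell's Dirichlet energy plus a lower-order term controlled by $\int_{\del\Omega^\eps}|\ukn|^2$, then sum. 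A secondary technical point is that the Steklov eigenfunctions live on $\del\Omega^\eps$ whose measure is spread over $M$ only in the weak-$*$ limit, so care is needed in passing between $L^2(\del\Omega^\eps)$ and $L^2(M,\beta\,\de\mu_g)$; this is where the explicit choice of equidistributed net and the uniform density of the balls are essential.
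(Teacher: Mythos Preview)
Your overall architecture matches the paper's: construct $\Omega^\eps$ by removing many small geodesic balls, prove $\limsup\sigma_k^{(\eps)}\le\lambda_k$ by using Laplace eigenfunctions as Steklov test functions together with weak-$*$ convergence of $\de A_{\partial\Omega^\eps}$, and prove $\liminf\sigma_k^{(\eps)}\ge\lambda_k$ by harmonically extending Steklov eigenfunctions across the holes, extracting weak $\RH^1(M)$ limits, and identifying them as Laplace eigenfunctions. The construction in the paper differs slightly from yours (it uses a uniform maximal $\eps$-separated net with Vorono\u{\i} cells $V_p^\eps$, and encodes $\beta$ in the \emph{radii} via $\CH^{d-1}(\partial B_{r_{\eps,p}})=\beta(p)\Vol_g(V_p^\eps)$, rather than in the density of centres), but that is a matter of taste.

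There is, however, a concrete error and a genuine gap.

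\textbf{The error.} Your claim that in dimension $d=2$ the balls must shrink exponentially in $\eps^{-1}$ is wrong. That scaling belongs to Dirichlet homogenisation (Cioranescu--Murat), where what matters is the harmonic capacity of the holes. Here the boundary condition is Steklov, and the only constraint determining $r_\eps$ is that the total boundary length be of order one: with $N\sim\eps^{-2}$ holes and $\CH^1(\partial B_{r_\eps})\sim r_\eps$, this forces $r_\eps\asymp\eps^2$. More generally the paper takes $r_\eps\asymp\eps^{d/(d-1)}$ in every dimension. Relatedly, your framing of the harmonic extension cost in terms of ``$O(r^{d-2})$ per hole, $O(1/\log r)$ in 2D'' is a capacity estimate and is not what is used or needed: the paper proves instead that the harmonic extension operator $\RH^1(\Omega^\eps)\to\RH^1(\BT^\eps)$ has \emph{uniformly bounded} norm (not $1+o(1)$), and handles the cross-term $\int_{\BT^\eps}\nabla\Ukn\cdot\nabla v$ via Cauchy--Schwarz and $\Vol_g(\BT^\eps)\to 0$, using that $v$ is smooth.

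\textbf{The gap.} The step you flag as the main obstacle is indeed where the real work lies, but your proposed resolution is insufficient. You assert that $\int_{\partial\Omega^\eps}\ukn u_\ell^{(\eps)}\to\int_M\beta\, v_k v_\ell$ ``follows from the weak-$*$ convergence of $\de A_{\partial\Omega^\eps}$ together with strong $\RL^2(M)$-convergence of the extensions.'' But weak-$*$ convergence of measures only gives convergence of integrals against a \emph{fixed} continuous test function, whereas $(\ukn)^2$ varies with $\eps$; and strong $\RL^2(M)$-convergence says nothing about traces on the moving boundary $\partial\Omega^\eps$ (the uniform trace bound controls $\|\cdot\|_{\RL^2(\partial\Omega^\eps)}$ by $\|\cdot\|_{\RH^1}$, but you only have weak $\RH^1$-convergence). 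The paper closes this gap with two additional ingredients you do not mention: first, a uniform $\RL^\infty$ bound on $\ukn$ (obtained from a uniform $\RBV\to\RL^1$ trace bound and a Moser-type argument); second, and crucially, an auxiliary function $\Psi_p^\eps$ on each punctured Vorono\u{\i} cell $Q_p^\eps$ solving $\Delta\Psi_p^\eps=c_{\eps,p}$ with $\partial_\nu\Psi_p^\eps=1$ on $\partial B_{r_\eps}(p)$ and $\partial_\nu\Psi_p^\eps=0$ on $\partial V_p^\eps$. Integration by parts against $\Psi_p^\eps$ converts the boundary integral $\int_{\partial\Omega^\eps}\ukn v$ into a volume integral $\sum_p c_{\eps,p}\int_{Q_p^\eps}\ukn v$ (which converges) plus an error $\sum_p\int_{Q_p^\eps}\nabla\Psi_p^\eps\cdot\nabla(\ukn v)$, shown to be $o(1)$ via the estimate $\|\nabla\Psi_p^\eps\|_{\RL^2(Q_p^\eps)}=O(\eps^{d/2})$ (itself coming from a uniform Poincar\'e--Wirtinger inequality on the $Q_p^\eps$). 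This cell-corrector mechanism is the heart of the homogenisation argument and is absent from your outline.
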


The proof of Theorem~\ref{thm:approx} is in the spirit of 
Girouard--Henrot--Lagac\'e~\cite{GHL} where Neumann eigenvalues
of a domain in Euclidean space are related to Steklov eigenvalues of subdomains through
periodic homogenisation by obstacles.

Homogenisation theory is a branch of applied mathematics that is interested in
the study of PDEs and variational problems in the presence of structures at
many different scales; in the presence of two scales they are usually referred to
as the \emph{macrostructure} and \emph{microstructure}. The methods are usually
divided in two general categories: deterministic (or periodic), and
stochastic. 
The effectiveness of
homogenisation in shape optimisation, see for example the Allaire's influencial monograph~\cite{allaire} and the references therein, leads one to believe that
it should also be useful elsewhere in geometric analysis. 

The main obstacle to the application of deterministic homogenisation theory in
the Riemannian setting is that most Riemannian manifolds do not exhibit any form
of periodic structure. It is therefore not surprising that
homogenisation theory in this setting has either been 
applied when an underlying manifold exhibits a periodic-like structure, see e.g.
the work of Boutet de Monvel--Khruslov~\cite{BdMK} and
Contreras--Iturriaga--Siconolfi~\cite{CIS}, or used the periodic structure of an
ambient space in which a manifold is embedded, see Braides--Cancedda--Chiad\`o
Piat~\cite{BCC} or relied on an imposed periodic structure in predetermined charts, see
Dobbersch\"utz--B\"ohm~\cite{DB}. Our approach is distinct in that it is entirely
intrinsic and does not require a periodic structure at any stage.

We note that stochastic homogenisation has been used in geometric contexts, see e.g. the
recent paper by Li
\cite{xmli}. Chavel--Feldman~\cite{chavelfeldman1,chavelfeldman2} also studied
the effect on the spectrum of the Laplacian of removing a large, but fixed,
number of small geodesic balls on which Dirichlet boundary conditions are
imposed. However, no consideration was given to the distribution of those
geodesic balls, nor to asymptotic behaviour joint in the number of balls removed
and their size.

\begin{remark}
  It is natural to expect that the Steklov eigenvalues of a domain $\Omega$ with
  smooth boundary would be related to the
eigenvalues $\lambda_k(\partial\Omega)$ of the Laplace operator \emph{of its
boundary}, since the Dirichlet-to-Neumann map
is an elliptic pseudo-differential operator that has the same principal symbol
as the square root of the Laplace operator on $\del \Omega$, see \cite[Section
7.11]{Taylor}. Indeed, upper bounds for $\sigma_k(\Omega)$ in terms of
$\lambda_k(\partial\Omega)$ have been obtained by Wang--Xia~\cite{wangxia2} for
$k=1$ and by Karpukhin~\cite{Karpukhin2017} for higher eigenvalues.
Quantitative estimate for $|\sigma_k(\Omega)-\sqrt{\lambda_k(\partial\Omega)}|$
have been obtained by Provenzano--Stubbe~\cite{ProvStub2019} for 
domains in Euclidean space and by Xiong~\cite{Xiong2018} and
Colbois--Girouard--Hassannezhad \cite{ColbGirHas2020} in the Riemannian setting.
The eigenvalues of various other spectral problems have also been compared with
Steklov eigenvalues. See the work of Kuttler--Sigillito~\cite{kuttsigi1} and
Hassannezhad--Siffert~\cite{HassannezhadSiffert}.

A different type of relationship was studied in
Lamberti--Provenzano~\cite{LambertiProvenzano2015}, where it is proved that the
Steklov eigenvalues of a domain $\Omega\subset\R^d$ can be obtained as
appropriate limits of non-homogeneous Neumann eigenvalues with the mass
concentrated at the boundary of $\Omega$.

\end{remark}

\subsection{Isoperimetric inequalities}
Theorem~\ref{thm:approx} has several applications to the study of isoperimetric
inequalities for Steklov eigenvalues. These are most naturally stated in terms
of the scale invariant eigenvalues
\begin{equation}
  \label{eq:isolamb}
  \Lambda_k(M,g) := \Vol_g(M)^{2/d} \lambda_k(M,g,1)
\end{equation}
and
\begin{equation}
  \label{eq:isostek}
  \Sigma_k(\Omega,g) := \CH^{d-1}(\del \Omega)^{1/(d-1)} \sigma_k(\Omega,g),
\end{equation}
where $\Vol_g(M)$ is the volume of $M$ and $\CH^{d-1}(\del \Omega)$ is the $(d-1)$-Hausdorff
measure of the boundary $\del \Omega$.
It is natural to ask for upper bounds on the functionals \eqref{eq:isolamb} and
\eqref{eq:isostek}, and as such to define
\begin{equation}
  \Lambda_k^*(M) := \sup_{g\in\CG(M)} \Lambda_k(M,g)
\end{equation}
and
\begin{equation}
  \Sigma_k^*(M) := \sup_{\Omega \subset M} \sup_{g \in \CG(\overline \Omega)}
  \Sigma_k(\Omega,g)
\end{equation}
where for any manifold, with or without boundary, $\CG(M)$ is the set of all Riemannian metrics on $M$. Spectral isoperimetric inequalities often have a
wildly different behaviour in dimension two than in dimension at least three, as
exhibited in the work of Colbois--Dodziuk~\cite{cd} and Korevaar~\cite{kvr}. As
such, we study these cases separately.

\subsubsection{Isoperimetric inequalities in dimension two}
From Colbois--El Soufi--Girouard~\cite{ceg2} it is known that $\Sigma_k^*(M)$ is
finite for each surface. The next result provides an effective lower bound.
\begin{theorem}
  \label{thm:main}
  For every $k \in \N$ and every smooth, closed, connected surface $M$,
  \begin{equation}
    \Sigma_k^*(M) \ge \Lambda_k^*(M).
  \end{equation}
\end{theorem}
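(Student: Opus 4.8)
The plan is to obtain Theorem~\ref{thm:main} as an essentially immediate consequence of the approximation result Theorem~\ref{thm:approx}, applied with the constant weight $\beta\equiv 1$ (so that Problem~\eqref{prob:laplace} is the ordinary Laplace--Beltrami eigenvalue problem and $\lambda_k(M,g,1)=\lambda_k(M,g)$). Fix an arbitrary metric $g\in\CG(M)$. By Theorem~\ref{thm:approx} there is a sequence of domains $\Omega^\eps\subset M$, each obtained from $M$ by removing finitely many small geodesic balls, such that $\de A_{\del\Omega^\eps}$ converges weak-$*$ to $\de\mu_g$ and $\sigma_k(\Omega^\eps,g)\to\lambda_k(M,g,1)$ as $\eps\to 0$.

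The next step is to record that, since $M$ is compact, the constant function $1$ is an admissible test function for the weak-$*$ convergence, whence
\[
  \CH^1(\del\Omega^\eps)=\int_M \de A_{\del\Omega^\eps}\xrightarrow{\ \eps\to0\ }\int_M\de\mu_g=\Vol_g(M).
\]
As $d=2$, multiplying this by the eigenvalue convergence gives
\[
  \Sigma_k(\Omega^\eps,g)=\CH^1(\del\Omega^\eps)\,\sigma_k(\Omega^\eps,g)\xrightarrow{\ \eps\to0\ }\Vol_g(M)\,\lambda_k(M,g,1)=\Lambda_k(M,g).
\]
Each pair $(\Omega^\eps, g|_{\overline{\Omega^\eps}})$ is an admissible competitor in the definition of $\Sigma_k^*(M)$: $\Omega^\eps$ is connected because deleting small balls from the connected surface $M$ cannot disconnect it, and $\del\Omega^\eps$ is a finite disjoint union of small geodesic circles, hence smooth, so $g$ restricts to a smooth metric on $\overline{\Omega^\eps}$. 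Therefore $\Sigma_k^*(M)\ge\Lambda_k(M,g)$, and taking the supremum over $g\in\CG(M)$ yields $\Sigma_k^*(M)\ge\Lambda_k^*(M)$.

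Given Theorem~\ref{thm:approx}, the argument is pure bookkeeping and I do not expect any genuine obstacle; the only point meriting care is that the \emph{scale-invariant} functional $\Sigma_k$ passes to the limit, which is precisely why one needs simultaneous control of the boundary length $\CH^1(\del\Omega^\eps)$ (from weak-$*$ convergence against constants) and of $\sigma_k(\Omega^\eps,g)$ (the substance of Theorem~\ref{thm:approx}). As a sanity check one may also run the same computation with a general positive $\beta\in\RC^\infty(M)$: the limit is then $\bigl(\int_M\beta\,\de\mu_g\bigr)\lambda_k(M,g,\beta)$, which by conformal invariance of the Laplace spectrum in dimension two equals $\Lambda_k(M,\beta g)$, so letting $\beta$ vary buys nothing beyond letting $g$ range over $\CG(M)$ and the choice $\beta\equiv 1$ is already optimal. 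Finally, the possibility that $\Lambda_k^*(M)$ be infinite is immaterial, the inequality being vacuous in that case, while in fact $\Sigma_k^*(M)<\infty$ for every surface by Colbois--El Soufi--Girouard~\cite{ceg2}.
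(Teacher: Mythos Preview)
Your argument is correct and is essentially the paper's own proof: fix a metric, apply Theorem~\ref{thm:approx} with $\beta\equiv 1$, pass the normalised Steklov eigenvalue to the limit $\Lambda_k(M,g)$, and take the supremum over $g$. The only cosmetic difference is that the paper uses a $\delta$-approximation of $\Lambda_k^*(M)$ from the outset and invokes the exact equality $\CH^1(\partial\Omega^\eps)=\Vol_g(M)$ coming from the explicit construction, whereas you recover the boundary length in the limit from weak-$*$ convergence against constants; both work.
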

This should be compared with \cite[Theorem 9]{GHL} where a similar inequality
was proved, relating Steklov and Neumann eigenvalues of a domain in Euclidean
space.
The storied study of $\Lambda_k^*$ for various $k$ and smooth surfaces $M$ of
different topologies
yields explicit lower bounds for $\Sigma_k^*$, which we 
record in Section \ref{sec:fbms}. 
Kokarev~\cite[Theorem $A_1$, Example 1.3]{kok}
proved that $\Sigma_1^*(\S^2) \le 8\pi$.
Theorem
\ref{thm:main} and the known value $\lambda_1(\S^2,g_{0})=2$ for the round sphere,
shows that Kokarev's bound is sharp.
\begin{cor}
  \label{cor:sphere}
  The following equality holds:
  \begin{equation}
    \Sigma_1^*(\S^2) = 8 \pi.
  \end{equation}
\end{cor}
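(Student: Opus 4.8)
The plan is to deduce Corollary~\ref{cor:sphere} by combining three inputs: Kokarev's upper bound $\Sigma_1^*(\S^2)\le 8\pi$, Theorem~\ref{thm:main} specialised to $M=\S^2$, and the classical Hersch inequality for the first Laplace eigenvalue of the round sphere. Concretely, Theorem~\ref{thm:main} with $k=1$ gives $\Sigma_1^*(\S^2)\ge\Lambda_1^*(\S^2)$, so it suffices to show $\Lambda_1^*(\S^2)=8\pi$. This is a well-known fact: Hersch's theorem states that for every metric $g$ on $\S^2$ one has $\lambda_1(\S^2,g,1)\Vol_g(\S^2)\le 8\pi$, i.e.\ $\Lambda_1(\S^2,g)\le 8\pi$, with equality for the round metric $g_0$ for which $\lambda_1(\S^2,g_0)=2$ and $\Vol_{g_0}(\S^2)=4\pi$, so that $\Lambda_1(\S^2,g_0)=2\cdot 4\pi=8\pi$. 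Hence $\Lambda_1^*(\S^2)=8\pi$.

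Putting these together, I would write the chain of inequalities
\begin{equation}
  8\pi = \Lambda_1^*(\S^2) \le \Sigma_1^*(\S^2) \le 8\pi,
\end{equation}
where the equality on the left is Hersch's theorem, the first inequality is Theorem~\ref{thm:main} with $k=1$, and the last inequality is Kokarev's bound~\cite{kok}. This forces equality throughout, giving $\Sigma_1^*(\S^2)=8\pi$.

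There is essentially no obstacle here, since all the hard work has been done: the nontrivial direction, namely that the bound $8\pi$ is actually \emph{attained} (or approached) by Steklov eigenvalues of domains in $\S^2$, is precisely the content of Theorem~\ref{thm:main}, whose proof in turn rests on the homogenisation result Theorem~\ref{thm:approx}. The one point worth making explicit in the write-up is that Theorem~\ref{thm:main} is stated as an inequality between suprema, and that $\Lambda_1^*(\S^2)$ is genuinely achieved (by the round metric) rather than merely approached, so that the left-hand equality is an honest equality; but even if it were only a supremum the argument would go through unchanged. I would therefore keep the proof to the three-line display above, citing Hersch for $\Lambda_1^*(\S^2)=8\pi$, Theorem~\ref{thm:main} for the middle step, and Kokarev for the upper bound.
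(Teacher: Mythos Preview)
Your proof is correct and matches the paper's own argument: Kokarev's bound gives $\Sigma_1^*(\S^2)\le 8\pi$, while Theorem~\ref{thm:main} together with the round metric (for which $\lambda_1(\S^2,g_0)=2$ and $\Vol_{g_0}(\S^2)=4\pi$) gives the reverse inequality. The only difference is cosmetic: you invoke Hersch's theorem to pin down $\Lambda_1^*(\S^2)=8\pi$ exactly, whereas the paper only uses the lower bound $\Lambda_1^*(\S^2)\ge\Lambda_1(\S^2,g_0)=8\pi$, which is all that is needed.
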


\begin{remark}
  \label{rem:counter}
  Both Corollary \ref{cor:sphere} and, further along, Theorem \ref{thm:tetrahedral} along with
  \eqref{eq:boundbeaten} are in contradiction with parts of \cite[Theorems 8.2]{fraschoen2}, where the bound
  \begin{equation}
  \Sigma_1^*(\S^2) \le 4 \pi
\end{equation}
  is given. Further discussion and related results are delayed to Appendix
  \ref{sec:comment}.
\end{remark}

Very recent work of Karpukhin--Stern \cite[Theorem 5.2]{KS} in fact shows that for all
  surfaces $M$, and for $j \in \set{1,2}$
  \begin{equation}
    \Sigma_j^*(M) \le  \Lambda_j^*(M),
  \end{equation}
  using methods from min-max theory of harmonic maps. In combination with Theorem
  \ref{thm:main}, we obtain the following result, also presented as
  \cite[Proposition 5.9]{KS}, which extends Corollary
\ref{cor:sphere}.
  \begin{cor}
    \label{cor:equality}
    For all closed surfaces $M$, the following equalities hold
    \begin{equation}
      \Sigma_1^*(M) = \Lambda_1^*(M)
    \end{equation}
    and
    \begin{equation}
      \Sigma_2^*(M) = \Lambda_2^*(M).
    \end{equation}
  \end{cor}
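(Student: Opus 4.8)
The plan is simply to sandwich $\Sigma_j^*(M)$ between two copies of $\Lambda_j^*(M)$. The lower bound $\Sigma_j^*(M) \ge \Lambda_j^*(M)$ is exactly Theorem~\ref{thm:main}, applied with $k = j$; recall that this inequality is itself produced by the homogenisation procedure of Theorem~\ref{thm:approx} with constant weight $\beta \equiv 1$, which realises (after the appropriate volume/perimeter normalisation, with $d = 2$) any value $\Lambda_j(M,g)$ as a limit of values $\Sigma_j(\Omega^\eps,g)$ along a sequence of domains $\Omega^\eps \subset M$. For the reverse inequality we invoke \cite[Theorem 5.2]{KS}, which gives $\Sigma_j^*(M) \le \Lambda_j^*(M)$ in the two cases $j = 1$ and $j = 2$ via min-max for harmonic maps. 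Combining the two forces $\Sigma_j^*(M) = \Lambda_j^*(M)$ for $j \in \set{1,2}$, which is the assertion; the restriction to $j \in \set{1,2}$ comes entirely from the Karpukhin--Stern side, since Theorem~\ref{thm:main} holds for all $k$.

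The only point that requires care is purely one of bookkeeping: both inequalities must refer to the same normalisations and the same families of competitors. Here $\Lambda_j^*(M) = \sup_{g \in \CG(M)} \Vol_g(M)^{2/d}\lambda_j(M,g,1)$ and $\Sigma_j^*(M) = \sup_{\Omega \subset M}\sup_{g \in \CG(\overline\Omega)} \CH^{d-1}(\del\Omega)^{1/(d-1)}\sigma_j(\Omega,g)$ with $d = 2$, and one should check that the statement of \cite[Theorem 5.2]{KS} is phrased with exactly these quantities (in particular, that allowing the metric to vary over all of $\CG(\overline\Omega)$, rather than being the restriction of a metric on $M$, does not change $\Sigma_j^*$). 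There is no genuine obstacle beyond this: the mathematical content resides in Theorem~\ref{thm:approx}/Theorem~\ref{thm:main} on one side and in the harmonic-map min-max argument of \cite{KS} on the other, so once both are in hand the proof of the corollary is a one-line deduction. As a sanity check, taking $M = \S^2$ and $j = 1$ with the classical value $\Lambda_1^*(\S^2) = 8\pi$ recovers Corollary~\ref{cor:sphere}.
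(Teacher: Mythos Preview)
Your argument is correct and is exactly the paper's own: the corollary is stated immediately after the paragraph citing \cite[Theorem 5.2]{KS} for the upper bound $\Sigma_j^*(M)\le\Lambda_j^*(M)$, and the paper's proof is nothing more than combining that with Theorem~\ref{thm:main}. Your additional remarks on normalisation are reasonable caution but do not change the approach.
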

This leads naturally to the following conjecture.
\begin{conjecture}
  For all closed surfaces $M$ and all $k \in \N$,
  \begin{equation}
    \Sigma_k^*(M)=\Lambda_k^*(M).
\end{equation}
\end{conjecture}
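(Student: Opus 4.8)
The inequality $\Sigma_k^*(M)\ge\Lambda_k^*(M)$ is exactly Theorem~\ref{thm:main}: applying Theorem~\ref{thm:approx} with $\beta\equiv 1$ to a metric $g$ nearly maximising $\Lambda_k(M,g)$ produces domains $\Omega^\eps$ with $\CH^{1}(\del\Omega^\eps)\to\Vol_g(M)$ and $\sigma_k(\Omega^\eps,g)\to\lambda_k(M,g,1)$, so that $\Sigma_k(\Omega^\eps,g)\to\Lambda_k(M,g)$. Hence the conjecture is equivalent to the reverse inequality $\Sigma_k^*(M)\le\Lambda_k^*(M)$, and the plan is to prove this by extending to every index the harmonic-map min--max argument of Karpukhin--Stern, which settles the cases $k\in\set{1,2}$.

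For the upper bound I would start from a domain $\Omega\subset M$ and a metric $g$ for which $\Sigma_k(\Omega,g)$ is within $\eps$ of $\Sigma_k^*(M)$, normalised so that $\CH^{1}(\del\Omega)=1$. The span of the first $k+1$ Steklov eigenfunctions $u_0,\dotsc,u_k$ yields, after a Hersch-type balancing on the boundary curve (the centre-of-mass and canonical-dilation argument of Nadirashvili and Fraser--Schoen, as refined by Girouard--Karpukhin and used by Karpukhin--Stern), a map $\Phi\colon\overline\Omega\to\S^{k}$ whose boundary trace has vanishing average against $\de A_{\del\Omega}$ and boundary Dirichlet energy controlled by $\sigma_k(\Omega,g)^{-1}$; this exhibits $\sigma_k(\Omega,g)$ as bounded above by the infimum of the boundary energy over a suitable family of such maps. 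Running the free-boundary harmonic replacement scheme of Karpukhin--Stern along a homogenising sequence $\Omega^\eps$ with $\de A_{\del\Omega^\eps}\to\beta\,\de\mu_g$ weak-$*$, the resulting free-boundary harmonic maps should converge to a (possibly branched) harmonic map from the \emph{closed} surface $M$ into a round sphere, whose energy bounds $\Lambda_k(M,h)$ for a limiting metric $h$, hence $\Lambda_k^*(M)$.

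The main obstacle is the bubbling and energy-quantisation analysis when $k\ge 3$. As in the Laplace maximisation problem, the limiting object need not be a single harmonic map on $M$: energy may concentrate and escape into harmonic $2$-spheres, so that in the limit one controls $\Sigma_k^*(M)$ only by a sum $\Lambda_{k_0}^*(M)+\sum_{i\ge 1}\Lambda_{k_i}^*(\S^2)$ with $k_0+\sum_i k_i=k$; using the value $\Lambda_m^*(\S^2)=8\pi m$ of Karpukhin--Nadirashvili--Penskoi--Polterovich this reads $\Sigma_k^*(M)\le\Lambda_{k_0}^*(M)+8\pi(k-k_0)$. Closing the argument then demands the \emph{gluing inequality} $\Lambda_{k_0}^*(M)+8\pi m\le\Lambda_{k_0+m}^*(M)$ for the Laplace spectrum, i.e. that attaching spheres to a near-maximiser cannot decrease the normalised eigenvalue. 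This is trivially available precisely when no nontrivial decomposition is possible, i.e. for $k\le 2$, and open in general. In short, the difficulty is inherited from the still-open structure theory of $\Lambda_k^*(M)$ under degeneration of metrics, and — in view of Theorem~\ref{thm:main} — a proof of the conjecture is essentially equivalent to establishing this gluing inequality together with the regularity of Laplace maximisers.

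As a partial outcome, the same scheme would prove the conjecture conditionally: for any surface $M$ on which the expected recursion $\Lambda_k^*(M)=\Lambda_{k-1}^*(M)+\Lambda_1^*(\S^2)$ holds (known for $M=\S^2$), an induction on $k$ using the bubbling decomposition above gives $\Sigma_k^*(M)=\Lambda_k^*(M)$ for that $M$ and all $k$.
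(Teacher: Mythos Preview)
The statement is labelled a \emph{Conjecture} in the paper and no proof is offered there; it is presented as the expected extension of Corollary~\ref{cor:equality} (which combines Theorem~\ref{thm:main} with the Karpukhin--Stern bound to settle $k\in\{1,2\}$) to all indices, and is left open for $k\ge 3$. There is therefore no paper proof to compare your proposal against, and what you have written is, as you yourself acknowledge, a programme rather than a proof.

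On the programme itself, you have mislocated the obstruction. The gluing inequality $\Lambda_{k_0}^*(M)+8\pi m\le\Lambda_{k_0+m}^*(M)$ that you call ``open in general'' is in fact known: it is exactly the Colbois--El~Soufi sphere-attaching construction invoked in Corollary~\ref{cor:explicit}, applied with a near-maximiser for $\Lambda_{k_0}$ in place of one for $\Lambda_1$. Concretely, attach to $(M,g)$ a round sphere of area $a$ with $8\pi/a$ just below $\lambda_{k_0}(M,g)$ and let the neck degenerate; the $(k_0+1)$th normalised eigenvalue of the glued surface then tends to $\Lambda_{k_0}(M,g)+8\pi$, and iteration gives the general inequality. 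Consequently, if your bubbling decomposition
\[
\Sigma_k^*(M)\le\Lambda_{k_0}^*(M)+8\pi(k-k_0)
\]
were available for some $k_0\le k$, the conjecture would already be a theorem. The genuine gap lies earlier, in the sentence ``running the free-boundary harmonic replacement scheme \ldots\ the resulting free-boundary harmonic maps should converge to a (possibly branched) harmonic map from the closed surface $M$'': for $k\ge 3$ there is no established min--max or energy-quantisation framework that produces such a decomposition of $\Sigma_k^*$ into Laplace data on $M$ plus sphere bubbles. The Karpukhin--Stern argument for $k\le 2$ relies on features specific to the first two eigenvalues and does not yield this in general. That missing analytic step is the content of the conjecture, not the subsequent arithmetic with $\Lambda_{k_0}^*$.
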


\subsubsection{Isoperimetric inequalities in dimension at least three}
For $d\geq 3$, it follows from the work of Colbois--Dodziuk~\cite{cd} that
$\Lambda_1^*(M)=+\infty$. Together with Theorem~\ref{thm:approx} this gives
$\Sigma_1^*(M)=+\infty$. Using the extra freedom provided by the weight $\beta$, we arrive
at more precise statements, starting with the following corollary to Theorem \ref{thm:approx}.

\begin{cor}
  \label{cor:betad}
  Let $(M,g)$ be a Riemannian manifold of dimension $d \ge 2$. For
 constant density $\beta >0$, the domains
  $\Omega^\eps\subset M$ obtained in Theorem \ref{thm:approx} satisfy
  \begin{equation}
    \Sigma_k(\Omega^\eps,g) \xrightarrow{\eps \to 0}
    \beta^{\frac{2-d}{d-1}}\Vol_g(M)^{\frac{2-d}{d(d-1)}}\Lambda_k(M,g).
  \end{equation}
\end{cor}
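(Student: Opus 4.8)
The plan is to combine the two conclusions of Theorem~\ref{thm:approx} with the elementary scaling of weighted Laplace eigenvalues under a constant weight, and then to rewrite everything in terms of the scale-invariant quantities $\Sigma_k$ and $\Lambda_k$.

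First I would extract the asymptotics of the boundary measure. Since $M$ is closed, the constant function $1$ is continuous on $M$, so testing the weak-$*$ convergence $\de A_{\del\Omega^\eps}\to\beta\,\de\mu_g$ provided by Theorem~\ref{thm:approx} against it gives
\[
  \CH^{d-1}(\del\Omega^\eps)=\int_M\de A_{\del\Omega^\eps}\xrightarrow{\eps\to0}\int_M\beta\,\de\mu_g=\beta\,\Vol_g(M).
\]
Next I would record the scaling identity $\lambda_k(M,g,\beta)=\beta^{-1}\lambda_k(M,g,1)$, valid for constant $\beta>0$: if $-\Delta\phi=\lambda\beta\phi$ then $(\beta\lambda,\phi)$ solves the same problem with weight $1$, and the variational characterisations of \eqref{prob:laplace} for weights $\beta$ and $1$ differ only by the constant factor $\beta$ in the Rayleigh quotient denominator. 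Equivalently, by definition of $\Lambda_k$ in \eqref{eq:isolamb}, $\lambda_k(M,g,\beta)=\beta^{-1}\Vol_g(M)^{-2/d}\Lambda_k(M,g)$.

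Then I would simply substitute. By Theorem~\ref{thm:approx}, $\sigma_k(\Omega^\eps,g)\to\lambda_k(M,g,\beta)$, so, using the two displays above together with the definition \eqref{eq:isostek} of $\Sigma_k$,
\[
  \Sigma_k(\Omega^\eps,g)=\CH^{d-1}(\del\Omega^\eps)^{1/(d-1)}\sigma_k(\Omega^\eps,g)\xrightarrow{\eps\to0}\bigl(\beta\Vol_g(M)\bigr)^{1/(d-1)}\beta^{-1}\Vol_g(M)^{-2/d}\Lambda_k(M,g).
\]
Collecting the exponents, $\tfrac1{d-1}-1=\tfrac{2-d}{d-1}$ for $\beta$ and $\tfrac1{d-1}-\tfrac2d=\tfrac{2-d}{d(d-1)}$ for $\Vol_g(M)$, yields exactly the asserted limit.

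There is essentially no obstacle here: the only point requiring a moment's care is the legitimacy of testing the weak-$*$ convergence of measures against the constant $1$, which holds precisely because $M$ is compact; alternatively, one reads off directly from the construction of $\Omega^\eps$ in the proof of Theorem~\ref{thm:approx} that $\CH^{d-1}(\del\Omega^\eps)\to\beta\Vol_g(M)$, bypassing this remark entirely.
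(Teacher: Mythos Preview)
Your proposal is correct and follows exactly the paper's approach: the paper simply notes that for constant $\beta>0$ one has $\lambda_k(M,g,\beta)=\beta^{-1}\lambda_k(M,g)$, and leaves the remaining substitution and exponent bookkeeping (which you carry out explicitly) to the reader.
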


This is a direct consequence of Theorem~\ref{thm:approx} since for constant
density $\beta>0$ one has
$$\lambda_k(M,g,\beta)=\frac{1}{\beta}\lambda_k(M,g).$$
By removing thin tubes joining boundary components of a domain $\Omega\subset M$,
Fraser--Schoen~\cite{FraserSchoenHigherd} proved that in dimension $d \ge 3$,
there is a family of domains $\Omega_\delta\subset\Omega$, with 
connected boundary and such that $\abs{\Sigma_1(\Omega) -
\Sigma_1(\Omega_\delta)} < \delta$.
In combination
with the previous corollary, this leads to the following result.
\begin{cor}\label{cor:large}
  Let $(M,g)$ be a Riemannian manifold of dimension $d\geq 3$. Then there exists
  a sequence of domains $\Omega^\eps\subset M$ with connected boundary such that
  \begin{equation}
    \lim_{\eps\to 0}\Sigma_1(\Omega^\eps,g)=+\infty.
  \end{equation}
\end{cor}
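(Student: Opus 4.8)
The plan is to combine Corollary~\ref{cor:betad} with the tube‑drilling construction of Fraser--Schoen~\cite{FraserSchoenHigherd}, exploiting that the exponent $\frac{2-d}{d-1}$ is strictly negative once $d\ge 3$. Keep the metric $g$ fixed; then $\Vol_g(M)$ is a fixed positive number and, since $M$ is closed and connected, $\Lambda_1(M,g)=\Vol_g(M)^{2/d}\lambda_1(M,g,1)$ is a fixed \emph{positive} number as well. Hence the quantity
\begin{equation}
  \beta^{\frac{2-d}{d-1}}\,\Vol_g(M)^{\frac{2-d}{d(d-1)}}\,\Lambda_1(M,g)
\end{equation}
that appears in Corollary~\ref{cor:betad} tends to $+\infty$ as the constant density $\beta\to 0^+$.

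First I would fix $n\in\N$ and choose a constant density $\beta_n>0$ small enough that the displayed quantity exceeds $n+2$. Applying Corollary~\ref{cor:betad} (which is itself Theorem~\ref{thm:approx} together with the fact that testing the weak‑$*$ convergence $\de A_{\del\Omega^\eps}\rightharpoonup \beta_n\de\mu_g$ against the constant function $1$ gives $\CH^{d-1}(\del\Omega^\eps)\to\beta_n\Vol_g(M)$) with density $\beta_n$ produces a family of domains whose scale‑invariant first eigenvalue converges to that quantity; from this family I select a single domain $\Omega_n\subset M$ with smooth boundary for which $\Sigma_1(\Omega_n,g)>n+1$. Note that $\del\Omega_n$ is, by construction, a disjoint union of finitely many geodesic spheres, hence disconnected (for $n$ large there are many of them).

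Second, I would invoke the Fraser--Schoen perturbation: because $d\ge 3$, one may drill arbitrarily thin tubes inside $\Omega_n$ joining the components of $\del\Omega_n$, obtaining a domain $\Omega_n'\subset\Omega_n$ with \emph{connected} boundary and with $\abs{\Sigma_1(\Omega_n,g)-\Sigma_1(\Omega_n',g)}<1$ (iterating over the finitely many boundary components, each step costing an arbitrarily small amount). Consequently $\Sigma_1(\Omega_n',g)>n$. Choosing any sequence $\eps_n\searrow 0$ and relabelling $\Omega^{\eps_n}:=\Omega_n'$ yields a sequence of domains with connected boundary along which $\Sigma_1(\Omega^{\eps_n},g)\to+\infty$.

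The only point requiring care is that the Fraser--Schoen construction be applicable in our situation: it must handle a boundary with an arbitrary finite number of smooth components and must control the scale‑invariant $\Sigma_1$ (not merely $\sigma_1$), since drilling a tube changes both the boundary measure and the eigenvalue. This is exactly what~\cite{FraserSchoenHigherd} provides; alternatively, any small loss of eigenvalue can simply be absorbed into the choice of $\beta_n$. Everything else is the bookkeeping of ordering the two limits — first $\eps\to 0$ at a fixed small density, then $\beta\to 0$ — which the passage to the single domains $\Omega_n$ and then to the subsequence $\Omega^{\eps_n}$ handles cleanly.
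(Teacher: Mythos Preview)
Your proposal is correct and follows essentially the same approach as the paper: the paper simply notes that Corollary~\ref{cor:betad} combined with the Fraser--Schoen tube-drilling result (which connects boundary components while perturbing $\Sigma_1$ by less than any given $\delta$) yields the statement, and your write-up fills in exactly this argument, including the observation that the exponent $\frac{2-d}{d-1}$ is negative for $d\ge 3$ so that sending $\beta\to 0^+$ drives the limit in Corollary~\ref{cor:betad} to $+\infty$.
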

In recent years several constructions of manifolds with large normalised Steklov
eigenvalue $\Sigma_1$ have been proposed. Colbois--Girouard~\cite{colbgir1} and
Binoy~\cite{ColbGirBinoy} constructed a sequence $\Omega_n$ of compact surfaces
with connected boundary such that $\Sigma_1(\Omega_n)\to\infty$.
Cianci--Girouard~\cite{CianciGirouard}  proved that some manifolds $M$ of
dimension $d\geq 4$ carry Riemannian metrics that are prescribed on $\partial M$
with uniformly bounded volume and arbitrarily large first Steklov eigenvalue
$\sigma_1$. 
Corollary~\ref{cor:large} provides a
new outlook on this question.

\subsubsection{Transferring bounds for Steklov eigenvalues to bounds for Laplace eigenvalues}

If $(M,g)$ is conformally equivalent to a Riemannian manifold with non-negative Ricci curvature, it follows from Colbois--Girouard--El Soufi \cite{ceg2} that  for each domain $\Omega \subset M$ with smooth boundary, and for each $k \ge 1$,
\begin{equation}
  \label{eq:isohighd}
  \sigma_k(\Omega)
\leq C_d\frac{\Vol_g(\Omega)^{\frac{d-2}{d}}}{\CH^{d-1}(\del
  \Omega)}k^{2/d}.
\end{equation}
Using the domains $\Omega^\eps$ from Theorem~\ref{thm:approx} and taking the limit as $\eps\to 0$ leads to the following.
\begin{cor}
  Let $(M,g)$ be a closed manifold with $g$ conformally equivalent to a metric with nonnegative Ricci curvature. For each $\beta\in C^\infty(M)$ positive,
  \begin{equation}\label{ineq:gny}
    \lambda_k(M,g,\beta)\int_M\beta\,d\mu_g\leq C_d\Vol_g(M)^{\frac{d-2}{d}}k^{2/d}.
  \end{equation}
\end{cor}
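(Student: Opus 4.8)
The plan is to combine the isoperimetric inequality \eqref{eq:isohighd} for Steklov eigenvalues with the approximation Theorem~\ref{thm:approx} and pass to the limit. Given a positive weight $\beta \in \RC^\infty(M)$, Theorem~\ref{thm:approx} furnishes domains $\Omega^\eps \subset M$ with $\de A_{\del \Omega^\eps} \rightharpoonup \beta \de \mu_g$ weak-$*$ and $\sigma_k(\Omega^\eps,g) \to \lambda_k(M,g,\beta)$. First I would note that the weak-$*$ convergence, tested against the constant function $1$, yields $\CH^{d-1}(\del \Omega^\eps) = \int_M \de A_{\del \Omega^\eps} \to \int_M \beta \de \mu_g$. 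Similarly, since $\Omega^\eps$ is obtained by removing geodesic balls of radius tending to zero, $\Vol_g(\Omega^\eps) \to \Vol_g(M)$; one should check this is consistent with the construction in the proof of Theorem~\ref{thm:approx}, but it is immediate because the total volume removed is $o(1)$.

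Next I would apply \eqref{eq:isohighd} to each $\Omega^\eps$: since $g$ is conformally equivalent to a metric of nonnegative Ricci curvature, and since this property is inherited by the restriction of $g$ to any subdomain $\Omega^\eps \subset M$, the hypothesis of \cite{ceg2} is satisfied, giving
\begin{equation}
  \sigma_k(\Omega^\eps,g) \CH^{d-1}(\del \Omega^\eps) \le C_d \Vol_g(\Omega^\eps)^{\frac{d-2}{d}} k^{2/d}.
\end{equation}
Then I would take $\eps \to 0$ on both sides. The left-hand side converges to $\lambda_k(M,g,\beta) \int_M \beta \de \mu_g$ by Theorem~\ref{thm:approx} together with the boundary-measure convergence above, and the right-hand side converges to $C_d \Vol_g(M)^{\frac{d-2}{d}} k^{2/d}$. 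This yields exactly \eqref{ineq:gny}.

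The only genuine subtlety—and the step I would be most careful with—is verifying that $\Omega^\eps$ actually \emph{has} smooth boundary and inherits the conformal-to-nonnegative-Ricci hypothesis, so that \eqref{eq:isohighd} legitimately applies with a \emph{dimensional} constant $C_d$ independent of $\eps$. Smoothness is clear since we remove finitely many disjoint geodesic balls of small radius (their boundary spheres are smooth and, for $\eps$ small, disjoint from each other and embedded). The conformal hypothesis is purely a property of the ambient conformal class, hence restricts to any open subset. Everything else is a routine passage to the limit; there is no compactness or regularity obstacle because all the hard analytic work is already encapsulated in Theorem~\ref{thm:approx}.
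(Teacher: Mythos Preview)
Your proposal is correct and follows exactly the approach indicated in the paper: apply \eqref{eq:isohighd} to the domains $\Omega^\eps$ from Theorem~\ref{thm:approx} and pass to the limit, using the weak-$*$ convergence of $\de A_{\del\Omega^\eps}$ to $\beta\de\mu_g$ and $\Vol_g(\Omega^\eps)\to\Vol_g(M)$. Your additional care in checking that $\del\Omega^\eps$ is smooth and that the conformal hypothesis restricts to subdomains is appropriate and not spelled out in the paper.
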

  This is a special case of an inequality that was proved in Grigor'yan--Netrusov--Yau~\cite[Theorem 5.9]{GNY2004}.

  \begin{cor}\label{cor:exponent}
    The exponent\, $2/d$ cannot be improved  in \eqref{eq:isohighd}, and 
    the exponents on $\Vol_g(\Omega)$ and $\CH^{d-1}(\del \Omega)$ cannot be replaced by any other exponents.
  \end{cor}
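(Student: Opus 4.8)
The plan is to treat the inequality \eqref{eq:isohighd} as a family of statements parametrised by the powers of $\Vol_g(\Omega)$ and $\CH^{d-1}(\del\Omega)$ and of $k$, and to show that any strengthening is contradicted by a suitable sequence of domains built from Theorem \ref{thm:approx}. First I would fix a closed manifold $(M,g)$ that is conformally equivalent to a metric of nonnegative Ricci curvature — for instance the round sphere $(\S^d,g_0)$ — so that \eqref{eq:isohighd} applies to every smooth domain $\Omega\subset M$. I would then recall that for this fixed ambient manifold one has, by Colbois--Dodziuk \cite{cd} in dimension $d\ge 3$ and by classical results in dimension two, a good understanding of how $\Lambda_k(M,g,\beta)$ (equivalently $\lambda_k$ with a nonconstant weight) can be made large or small by varying $\beta$. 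Concretely, the strategy is: (i) use a weight $\beta$ concentrating mass in a small region to make $\lambda_k(M,g,\beta)$ large while $\int_M\beta\,d\mu_g$ stays bounded, so that the product on the left of \eqref{ineq:gny} is near its allowed maximum; (ii) feed such a $\beta$ into Theorem \ref{thm:approx} to obtain domains $\Omega^\eps$ with $\sigma_k(\Omega^\eps,g)\to\lambda_k(M,g,\beta)$ and $\CH^{d-1}(\del\Omega^\eps)\to\int_M\beta\,d\mu_g$; (iii) substitute these $\Omega^\eps$ into the hypothetical improved inequality and derive a contradiction.

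For the exponent on $k$: suppose \eqref{eq:isohighd} held with $k^{2/d}$ replaced by $k^{\alpha}$ for some $\alpha<2/d$. Taking $\beta\equiv 1$ and the domains $\Omega^\eps$ from Corollary \ref{cor:betad} gives, in the limit $\eps\to 0$, the inequality $\Lambda_k(M,g)\le C_d' k^{\alpha}$ for all $k$, i.e. $\Lambda_k^*(M)\le C_d' k^\alpha$. But Weyl's law forces $\lambda_k(M,g)\asymp k^{2/d}$ for the fixed metric, hence $\Lambda_k(M,g)$ already grows like $k^{2/d}$, contradicting $\alpha<2/d$ for $k$ large. (One must also rule out $\alpha>2/d$, but that is not a strengthening of \eqref{eq:isohighd} and in any case the whole point is that $2/d$ is forced from both sides by Weyl asymptotics.) For the exponents on $\Vol_g(\Omega)$ and $\CH^{d-1}(\del\Omega)$: here I would exploit the two-parameter freedom. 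Replacing $\beta$ by $t\beta$ for a constant $t>0$ scales $\lambda_k(M,g,t\beta)=t^{-1}\lambda_k(M,g,\beta)$ and $\int_M t\beta\,d\mu_g=t\int_M\beta\,d\mu_g$, so the inequality \eqref{ineq:gny} is scale-consistent precisely because the exponents on volume and boundary area in \eqref{eq:isohighd} are what they are; more directly, rescaling the ambient metric $g\mapsto c^2 g$ (which is allowed since this only changes the conformal representative by a constant, preserving the curvature hypothesis) scales $\Vol_g(\Omega)$, $\CH^{d-1}(\del\Omega)$ and $\sigma_k(\Omega)$ by explicit powers of $c$, and any inequality of the form $\sigma_k\le C\,\Vol^{a}\,\CH^{b}\,k^{\alpha}$ that is to hold for all $c$ must have $a=(d-2)/d$ and $b=-1$ (matching the scaling dimension $[\text{length}]^{-1}$ of $\sigma_k$), so no other exponents are possible even without producing a sequence.

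The main obstacle I anticipate is \emph{not} the scaling bookkeeping, which is routine, but making the two sharpness claims genuinely independent and correctly interpreting "cannot be improved": one must be careful that the family of test domains realising sharpness of the $k$-exponent also lies in the hypothesis class (conformally nonnegative Ricci), which is automatic here since the ambient $(M,g)$ is fixed and it is $\Omega\subset M$ that varies, and Theorem \ref{thm:approx} produces $\Omega^\eps\subset M$ with the required convergence regardless of how wild the boundary $\del\Omega^\eps$ becomes. A secondary subtlety is that \eqref{eq:isohighd} as stated has an implicit constant $C_d$ depending only on $d$; to show the $k$-exponent is optimal one needs the lower bound $\limsup_k k^{-2/d}\Lambda_k^*(\S^d)>0$, which follows from Weyl's law applied to the fixed round metric (a single admissible metric suffices to witness the growth). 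Assembling these pieces — Weyl asymptotics for the lower bound, Theorem \ref{thm:approx} together with Corollary \ref{cor:betad} to transfer it to Steklov data, and a scaling argument for the volume/area exponents — completes the proof.
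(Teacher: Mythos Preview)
Your approach is essentially the paper's: pass to the limit via Theorem~\ref{thm:approx} to obtain inequality~\eqref{ineq:gny}, invoke the Weyl law on a fixed metric for the $k$-exponent, and use scaling for the remaining exponents. One genuine slip, though: the claim that metric scaling $g\mapsto c^2 g$ alone forces both $a=(d-2)/d$ and $b=-1$ is wrong, since dimensional analysis yields only the single relation $da+(d-1)b=-1$, a one-parameter family of admissible pairs. You need \emph{two} independent scalings, and the paper uses exactly the pair you mention earlier but then abandon: scaling $\beta\mapsto t\beta$ in the limit inequality~\eqref{ineq:gny} forces the exponent on $\int_M\beta$ (hence on $\CH^{d-1}(\del\Omega)$) to be $\pm1$, and then metric scaling pins down the exponent on $\Vol_g$. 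So keep both scaling arguments and drop the phrase ``more directly \ldots\ even without producing a sequence''. Your remarks about concentrating $\beta$ to make $\lambda_k$ large are unnecessary for the $k$-exponent, since a single fixed metric and Weyl's law already give $\Lambda_k(M,g)\asymp k^{2/d}$; the paper does not use them either.
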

  Indeed for $\beta>0$ constant, inequality~\eqref{ineq:gny} becomes
$\lambda_k(M,g)\Vol_g(M)^{\frac{2}{d}}\leq C_dk^{2/d}$, where the exponent
carries over from \eqref{eq:isohighd}. That it cannot be improved follows from
the Weyl Law. Now, changing the exponent of $\CH^{d-1}(\del\Omega)$
in~\eqref{eq:isohighd} would yield an inequality with a non-trivial exponent for
$\beta$, while changing the exponent of $\Vol_g(\Omega)$ would lead to an
inequality similar to~\eqref{ineq:gny}, but not invariant under scaling of the
Riemannian metric.

\begin{remark}
  Corollary~\ref{cor:exponent} improves uppon~\cite[Remark 1.4]{ceg2}, where it was already observed that the exponent $2/d$ could not be replaced by $1/(d-1)$ in inequality~\eqref{eq:isohighd}. Note also that for an Euclidean domain $\Omega\subset\R^d$, it follows from the isoperimetric inequality and~\eqref{eq:isohighd} that $\sigma_k(\Omega)|\partial\Omega|^{1/(d-1)}\leq Ck^{2/d}$. Deciding if the exponent $2/d$ can be improved in this inequality is still an open problem, which was proposed as~\cite[Open problem 5]{gpsurvey}.
\end{remark}

\subsection{Free boundary minimal surfaces}
In dimension $d=2$, the striking connection between
the Steklov eigenvalue problem and free boundary minimal submanifolds in the
unit ball was revealed by Fraser and Schoen in \cite{fraschoen,fraschoen3,fraschoen2}.
\begin{defi}[{cf. \cite[Theorem 2.2]{Li19}}]
  \label{defi:fbms}
  For $m \ge 3$, let $\mathbb{B}^m$ be the $m$-dimensional Euclidean unit
  ball and let $\Omega\subset\mathbb{B}^m$ be a $k$-dimensional submanifold with
  boundary $\partial\Omega=\overline{\Omega}\cap\partial\mathbb{B}^m$. 
  We say that $\Omega$ is a \emph{free boundary minimal submanifold} in $\mathbb{B}^m$ if one of the following equivalent conditions hold. 
\begin{enumerate}
  \item\label{fbms-1} $\Omega$ it is a critical point for the area functional among all $k$-dimensional submanifolds of $\mathbb{B}^m$ with boundary on $\partial \mathbb{B}^m$. 
  \item\label{fbms-2} $\Omega$ has vanishing mean curvature and meets $\partial\mathbb{B}^m$ orthogonally. 
  \item\label{fbms-3} The coordinate functions $x^1,\dotsc,x^{m}$ restricted to $\Omega$ are solutions to the Steklov eigenvalue problem \eqref{prob:steklov} with eigenvalue $\sigma=1$. 
\end{enumerate}
\end{defi}
Conditions \eqref{fbms-1} and \eqref{fbms-2} can be used to generalise Definition
\ref{defi:fbms} to arbitrary background manifolds in place of $ \mathbb{B}^m$,
but the equivalence of condition \eqref{fbms-3} is a special property of the
Euclidean unit ball. 
Conversely, in \cite[Proposition 5.2]{fraschoen2}, it was proven for surfaces
that maximal metrics $g$ on $\Omega$ for
$\Sigma_1$ have first Steklov eigenfunctions which realise an isometric immersion of $\Omega$ as a free boundary minimal
surface inside the unit ball $\B^m$. 
It is conjectured by Fraser and Li \cite[Conjecture 3.3]{FraserLi} that $\sigma=1$ is actually
equal to the \emph{first} nonzero Steklov eigenvalue $\sigma_1(\Omega)$ for any
given compact, properly embedded free boundary minimal hypersurface $\Omega$ in
the unit ball. 
Even in the case $m=3$ it is a
challenging problem to construct free boundary minimal surfaces with a given
topology. The first nontrivial examples (apart from the equatorial disk and the
critical catenoid) were found by Fraser and Schoen \cite{fraschoen2}. 
Their surfaces have genus 0 and an arbitrary number of boundary components. 
An independent construction of free boundary minimal surfaces with genus
$\gamma\in\{0,1\}$ and any sufficiently large number $b$ of boundary
components was given by Folha--Pacard--Zolotareva \cite{Folha2017}. 
The sequence of surfaces converges as $b\to\infty$ to the equatorial disk with multiplicity two. 
McGrath \cite[Corollary 2]{McGrath2018} proved that these surfaces indeed have
the property that $\sigma_1=1$ as conjectured by Fraser and Li.

Let us now mention a few other constructions for which it is an open
problem whether $\sigma_1 = 1$. Free boundary minimal surfaces with high genus
were constructed by Kapouleas--Li \cite{KapLi17} and Kapouleas--Wiygul
\cite{KapouleasWiygul2017} using desingularisation methods.  
The equivariant min-max theory developed by Ketover
\cite{Ketover2016equiv,Ketover2016fb} allowed the construction of free boundary
minimal surfaces of arbitrary genus with dihedral symmetry and of genus~0 with
symmetry group associated to one of the platonic solids. 
If their genus is sufficiently high, Ketover's surfaces have three boundary components. 
More recently, Carlotto--Franz--Schulz \cite{Carlotto2020}
constructed free boundary minimal surfaces with dihedral symmetry, arbitrary
genus and connected boundary.  

For certain free boundary minimal surfaces which are invariant under the action
of the
symmetry group associated to one of the platonic solids (see \cite[Theorem
6.1]{Ketover2016fb}) we confirm Fraser and Li's conjecture about the first
Steklov eigenvalue in the following theorem based on the work of \mbox{McGrath}
\cite{McGrath2018}.

\begin{theorem}\label{thm:tetrahedral}
Let\, $\Omega\subset\mathbb{B}^3$ be an embedded free boundary
minimal surface of genus $0$. 
If\, $\Omega$ has 
tetrahedral symmetry and $b=4$ boundary components or 
octahedral symmetry and $b\in\{6,8\}$ boundary components or 
icosahedral symmetry and $b\in\{12,20,32\}$ boundary components, 
then $\sigma_1(\Omega)=1$. 
\end{theorem}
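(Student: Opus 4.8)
The plan is to combine the variational characterization of $\sigma_1$ with McGrath's symmetrization technique. For an embedded free boundary minimal surface $\Omega \subset \B^3$, the coordinate functions $x^1, x^2, x^3$ are Steklov eigenfunctions with eigenvalue $1$ by Definition \ref{defi:fbms}\eqref{fbms-3}, and they have mean zero on $\del \Omega$ (this follows from the free boundary condition and the divergence theorem applied to the harmonic extension). Hence $\sigma_1(\Omega) \le 1$ automatically. The content of the theorem is the reverse inequality $\sigma_1(\Omega) \ge 1$, equivalently that no admissible test function can beat the Rayleigh quotient achieved by the coordinates. I would establish this by showing that any first eigenfunction must be a linear combination of $x^1, x^2, x^3$, using that the symmetry group $G$ (tetrahedral, octahedral or icosahedral) acts on the eigenspace and the topological constraint $b = \dim H_1(\Omega; \Z/2)/2 + 1$ or the genus-zero condition forces the multiplicity to be small.

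First I would set up the group action: $G \subset O(3)$ acts on $\B^3$ preserving $\Omega$, hence acts on $L^2(\del \Omega)$ and commutes with the Dirichlet-to-Neumann operator, so each Steklov eigenspace is a $G$-representation. The first eigenspace $E_1$ therefore decomposes into irreducibles of $G$. The standard $3$-dimensional representation (spanned by $x^1, x^2, x^3$) sits inside the $\sigma = 1$ eigenspace. Next I would invoke McGrath's argument \cite{McGrath2018}: for surfaces with enough symmetry, a first eigenfunction orthogonal to the coordinate functions would, after averaging over $G$-orbits or restricting to a fundamental domain, descend to a function on the quotient orbifold $\Omega / G$ satisfying a mixed Steklov--Dirichlet problem, and one shows the first eigenvalue of that mixed problem on the fundamental piece is $\ge 1$. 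The specific values of $b$ listed — $4$ for tetrahedral, $6$ and $8$ for octahedral, $12, 20, 32$ for icosahedral — are exactly those for which the quotient of $\del \Omega$ by $G$ is a single arc (or the fundamental domain is a disk with the right boundary decomposition), which is what makes the comparison clean; these are the cases in \cite[Theorem 6.1]{Ketover2016fb}. For the genus-zero hypothesis, the key point is that $\Omega$ is then a planar domain, and combining this with the order of $G$ and the Riemann--Hurwitz / orbifold Euler characteristic formula pins down $\Omega / G$ as a disk with a controlled number of boundary arcs, so the number of boundary components $b$ is forced to equal the number of $G$-orbits of boundary components, which is $1$.

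The main obstacle will be the mixed Steklov--Dirichlet eigenvalue estimate on the fundamental domain: showing that on the piece $\Sigma_0 = \Omega / G$ (a disk), with Steklov condition on the free boundary arcs and Dirichlet (or Neumann, depending on the parity of the representation type) on the arcs coming from the mirror planes, the first nonzero eigenvalue is at least $1$. Here I would follow McGrath's comparison: the coordinate functions, suitably restricted, are eigenfunctions of this mixed problem with eigenvalue $1$, and one argues via nodal-domain counting or via a direct Rayleigh-quotient bound that this is in fact the bottom of the relevant part of the spectrum — no antisymmetric or symmetric test function of lower energy exists because such a function would lift to a $G$-equivariant first Steklov eigenfunction of $\Omega$ independent of the coordinates, producing an eigenvalue strictly below $1$, contradicting the existence of the $3$-dimensional coordinate eigenspace at eigenvalue exactly $1$ once one checks it is the first. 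A secondary technical point is verifying that the three listed symmetry types together with genus $0$ and the prescribed $b$ really do appear among Ketover's surfaces and that the fundamental domain is as claimed; this is bookkeeping with the platonic symmetry groups (orders $12, 24, 60$) and their reflection subgroups, and I would cite \cite{Ketover2016fb} and \cite{McGrath2018} for the geometric input rather than reprove it.
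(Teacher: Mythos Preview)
Your overall strategy is correct in spirit --- reduce to McGrath's machinery via the symmetry group --- but two concrete points separate your sketch from a working proof, and one of them is a genuine gap.

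First, the structure of the fundamental domain $D=W\cap\Omega$ is not mere bookkeeping. The paper devotes a separate lemma to it: one uses Gau\ss--Bonnet on $\Omega$ and on $D$ to compute
\[
2\abs{G}\chi(D)=\abs{G}\tfrac{j}{2}+\abs{G}\ell_{1}\alpha_{1}+\abs{G}\ell_{2}\alpha_{2}+2(2-b),
\]
where $j,\ell_1,\ell_2$ count corners of the various possible exterior angles, and then solves this Diophantine equation case by case to conclude that $D$ is a disk with exactly four corners (three right-angled) when $b\in\{4,6,8,12,20\}$, and a disk with five right-angled corners when $b=32$. A further step using Smyth's flux lemma shows $\partial D$ hits every face of the wedge $W$, pinning down which edges are on $\partial\B^3$. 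Only after this does one know that McGrath's hypotheses hold; neither Riemann--Hurwitz nor a citation of Ketover gives this.

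Second, and more seriously, your claim that ``the specific values of $b$ listed \ldots\ are exactly those for which the quotient of $\partial\Omega$ by $G$ is a single arc'' is false for $b=32$. In that case $\partial D\cap\partial\Omega$ has \emph{two} connected components, so McGrath's Theorem 4.2 does not apply directly and the quotient is not a disk with one Steklov arc. The paper handles $b=32$ by a separate contradiction argument: assuming $\sigma_1(\Omega)<1$, a first eigenfunction $u$ satisfies $u=u\circ\psi$ for every $\psi\in G$ (McGrath's Lemma 3.2), so its two nodal domains (Courant) are $G$-invariant. One then tracks the nodal line inside the five-edged fundamental domain $D$: either it closes up after at most ten reflections (the maximal element order in the icosahedral group), enclosing a region touching at most ten of the $120$ copies of $D$ --- impossible for a $G$-invariant nodal domain --- or it forces at least six sign changes on a single component of $\partial\Omega$, which by genus~$0$ disconnects one of the nodal domains. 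Your proposal contains no mechanism for this case; the vague ``nodal-domain counting or direct Rayleigh-quotient bound'' is exactly the missing idea.
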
 

\begin{remark}
Ketover's result \cite[Theorem 6.1]{Ketover2016fb} states the existence of free boundary minimal surfaces 
with tetrahedral symmetry and $b=4$ boundary components, with octahedral
symmetry and $b=6$ boundary components and with 
icosahedral symmetry and $b=12$ boundary components.   
We conjecture that free boundary minimal surfaces with $b\in\{8,20,32\}$
boundary components and corresponding symmetries as stated in
Theorem~\ref{thm:tetrahedral} exist as well. 
In fact, we visualise all mentioned cases in Figures \ref{fig:tetrahedral}, \ref{fig:octahedral} and 
\ref{fig:icosahedral}. 
The simulation is based on Brakke's surface evolver \cite{Brakke1992} which we
use to approximate free boundary minimal disks $D$ inside a four-sided wedge as
shown on the right of Figure \ref{fig:tetrahedral}. 
If the wedge is chosen suitably such that it forms a fundamental domain for the
action of the symmetry group of one of the platonic solids (see Definition
\ref{def:fundamentaldomain}), then repeated reflection of $D$ leads to an
approximation of a free boundary minimal surface in the unit ball. 

The simulations allow approximations for $\Sigma_1$. 
Indeed, in Table \ref{tab:areas} we numerically compute the area of each surface
shown in Figures \ref{fig:tetrahedral}, \ref{fig:octahedral} and
\ref{fig:icosahedral} using the surface evolver. 
To increase accuracy, the area has been computed using a much finer triangulation than
the one used to render the images.   
Since any free boundary minimal surface $\Omega\subset\mathbb{B}^3$ has boundary
length equal to twice its area (see \cite[Proposition~2.4]{Li19}) and
since symmetries and topology imply $\sigma_1(\Omega)=1$ by Theorem~\ref{thm:tetrahedral}, we observe in each case 
\begin{equation}\label{eq:boundbeaten}
\Sigma_1(\Omega)=\CH^1(\partial\Omega)\,\sigma_1(\Omega)>4\pi. 
\end{equation}
\end{remark}

\begin{table}\centering
\begin{tabular}{l|c|c|c}
\bfseries symmetry & \bfseries  boundary components & \bfseries  area & $\Sigma_1(\Omega)$ \\\hline
tetrahedral   &  4 & $2.1752\,\pi$ & $4.3505\,\pi$  \\       
octahedral    &  6 & $2.4549\,\pi$ & $4.9099\,\pi$  \\       
octahedral    &  8 & $2.6141\,\pi$ & $5.2282\,\pi$  \\       
icosahedral   & 12 & $2.8757\,\pi$ & $5.7514\,\pi$  \\       
icosahedral   & 20 & $3.1149\,\pi$ & $6.2299\,\pi$  \\       
icosahedral   & 32 & $3.3444\,\pi$ & $6.6888\,\pi$  \\\hline 
\end{tabular}
\medskip
\caption{Areas and scale invariant eigenvalues of the surfaces shown in Figures \ref{fig:tetrahedral}, \ref{fig:octahedral} and \ref{fig:icosahedral}.}
\label{tab:areas}
\end{table}

We emphasise that we do \emph{not} answer the question whether or not any of the
free boundary minimal surfaces discussed in Theorem \ref{thm:tetrahedral}
respectively Table \ref{tab:areas} are maximisers for $\Sigma_1$ in the class of
surfaces with the same topology.
\begin{figure}%
\includegraphics[width=0.47\textwidth]{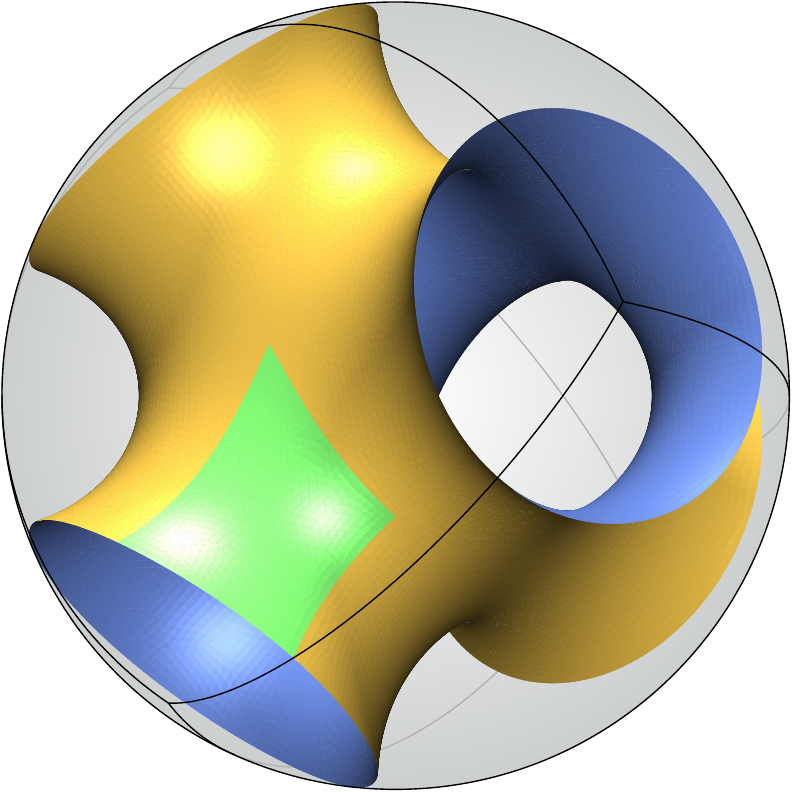}\hfill
\includegraphics[width=0.47\textwidth]{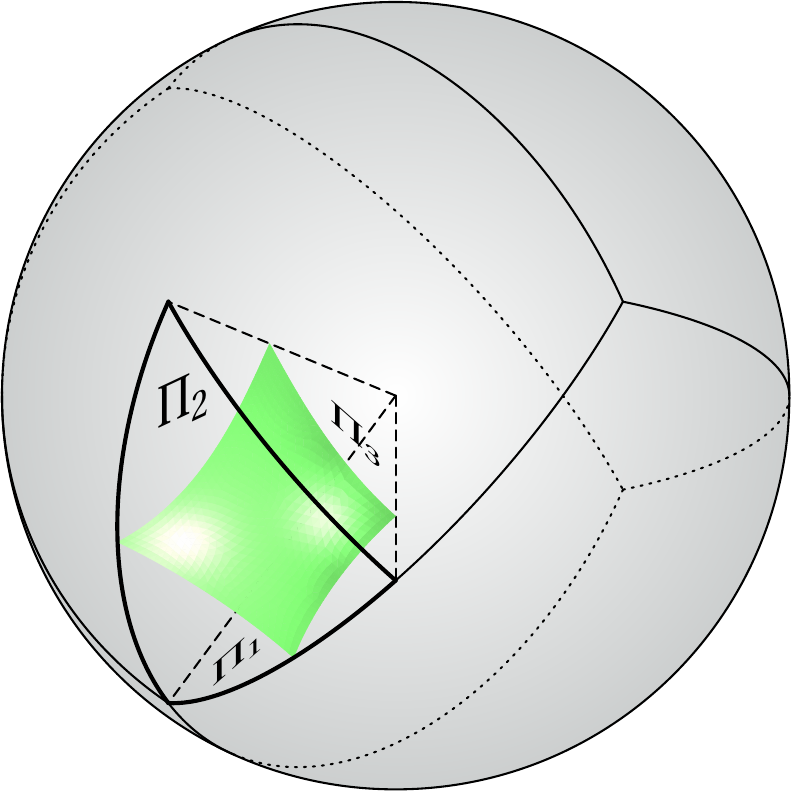}%
\caption{Free boundary minimal surface of genus 0 with tetrahedral symmetry and $4$ boundary components and its fundamental domain being a free boundary minimal disk inside a four-sided wedge.}%
\label{fig:tetrahedral}%
\end{figure}

\begin{figure}%
\includegraphics[width=0.47\textwidth]{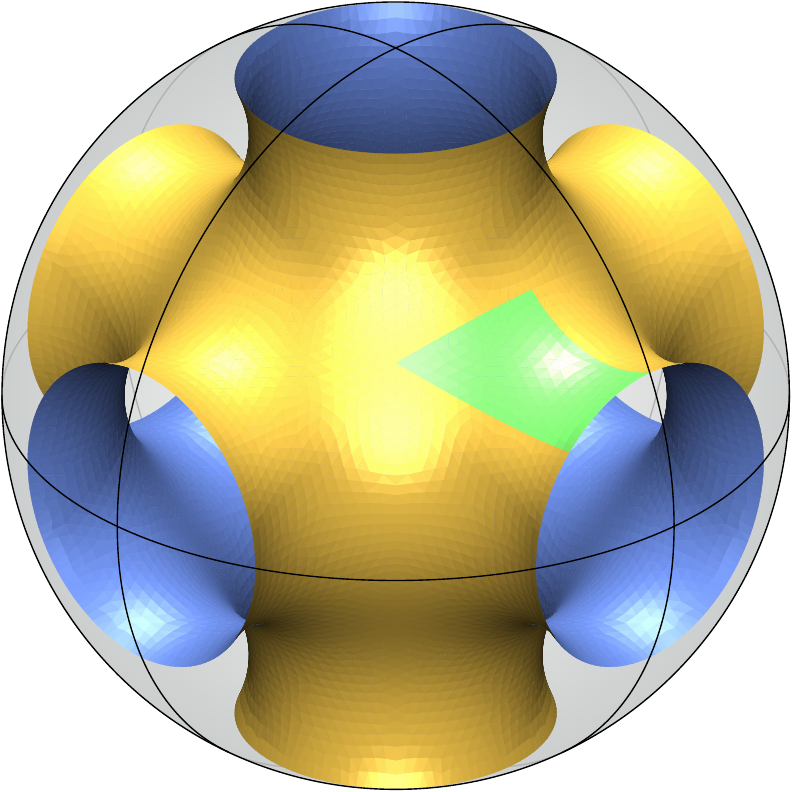}\hfill
\includegraphics[width=0.47\textwidth]{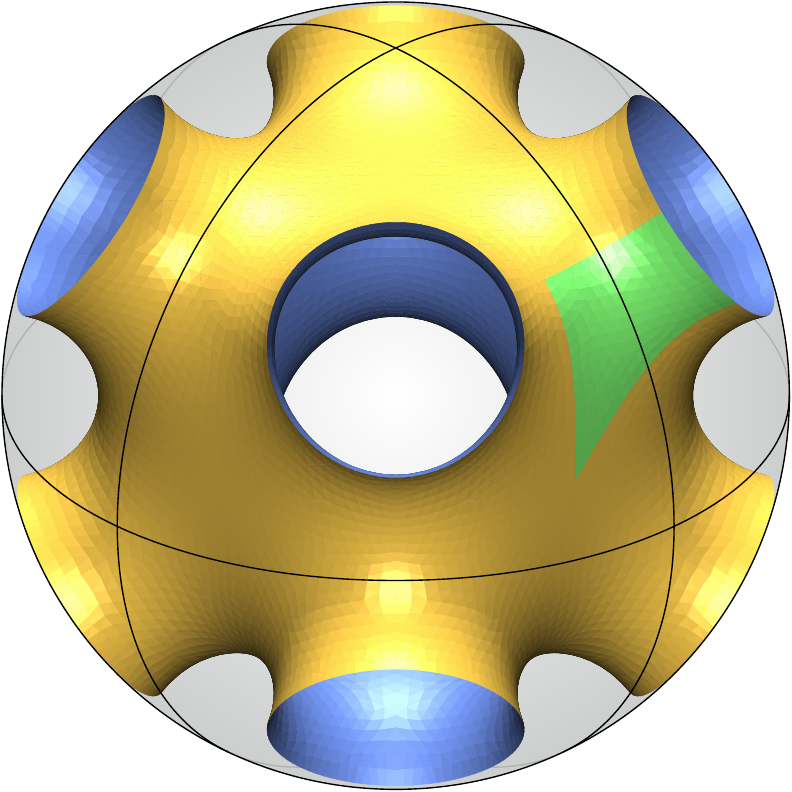}%
\caption{Free boundary minimal surfaces of genus $0$ with octahedral symmetry and $6$ or $8$ boundary components.}%
\label{fig:octahedral}%
\end{figure}

\begin{figure}\centering
\includegraphics[width=0.47\textwidth]{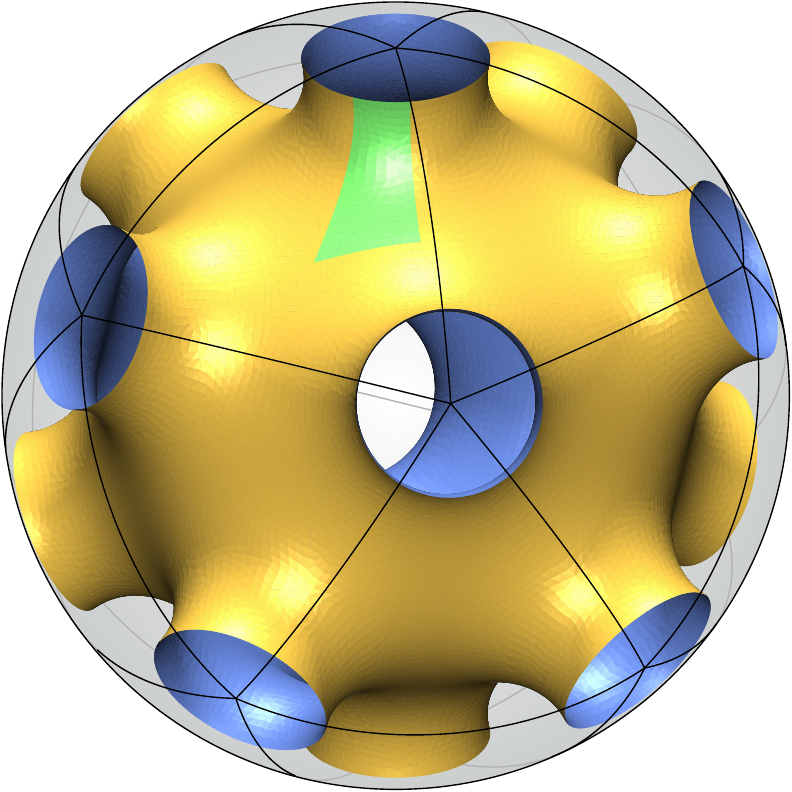}\hfill
\includegraphics[width=0.47\textwidth]{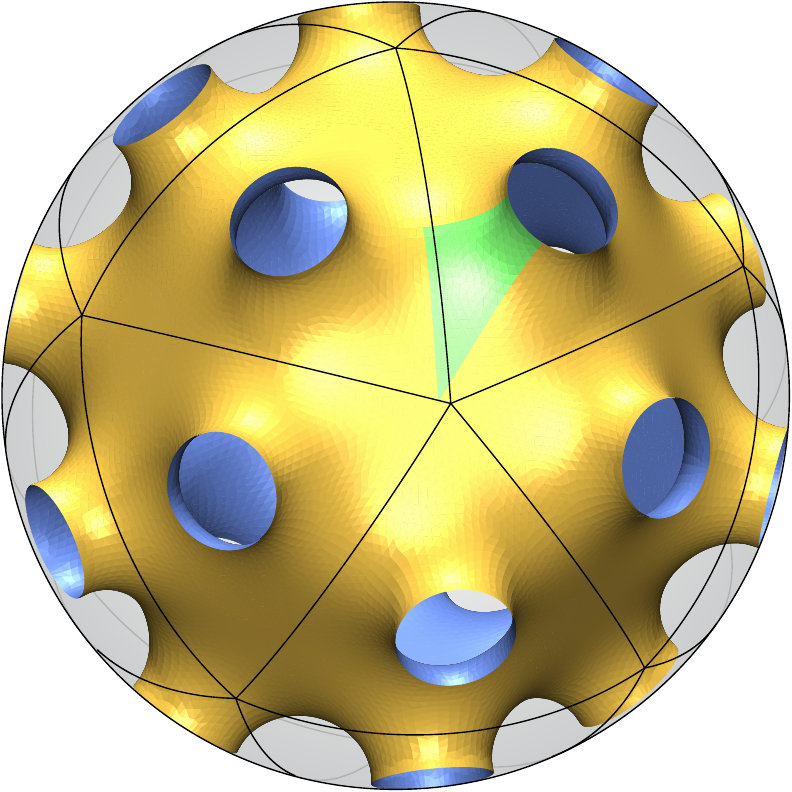}%
\\
\includegraphics[width=0.47\textwidth]{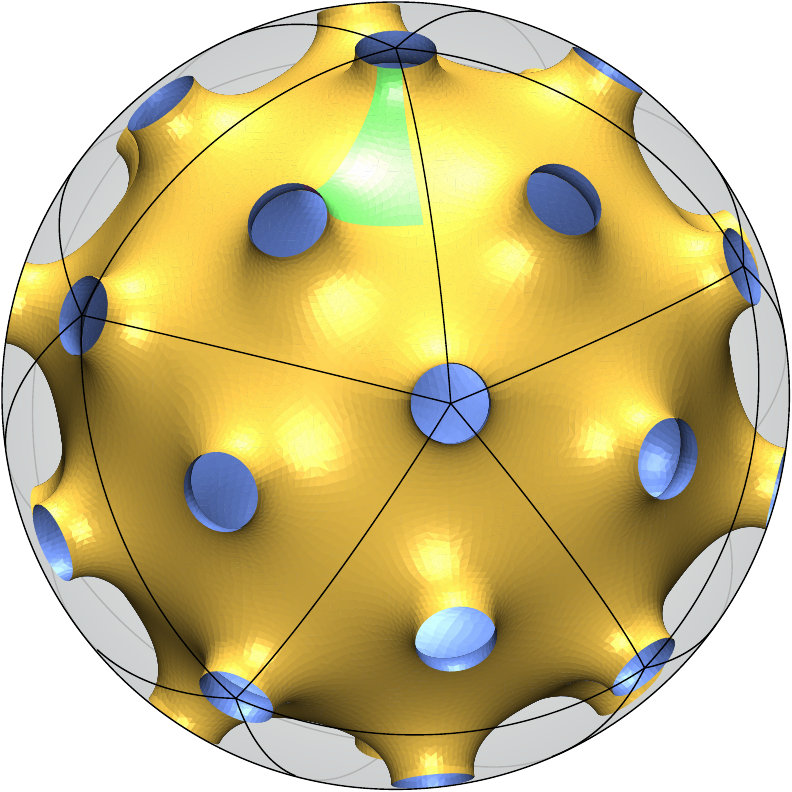}
\caption{Free boundary minimal surfaces of genus $0$ with icosahedral symmetry and $12$, $20$ or $32$ boundary components.}%
\label{fig:icosahedral}%
\end{figure}

\begin{remark}
For Laplace
eigenvalues, the eigenfunctions of a critical metric $g$ on $M$ for
$\Lambda_1$ realise an isometric immersion of $M$ as a minimal surface in the
sphere $\S^m$ for some $m$, see Nadirashvili \cite{nad96}.
\end{remark}

\subsection*{Plan of the paper}

In Section \ref{sec:homodef}, we describe precisely the homogenisation construction in the
Riemannian setting. Theorem \ref{thm:homo} is a restatement of Theorem \ref{thm:approx} in terms
of the explicit sequence of domains for which the normalised Steklov eigenvalues
converge to the weighted Laplace eigenvalues.

In Section \ref{sec:ineq} we prove various technical inequalities that will be used in the
later stages. Some of these inequalities are known for domains in flat
space and we extend their proofs to the Riemannian setting. We first need to
control the norm of the traces $\gamma^\eps :
\RH^1(\Omega^\eps) \to \RL^2(\del\Omega^\eps)$ and $\tau^\eps :
\RBV(\Omega^\eps) \to \RL^1(\del \Omega^\eps)$ uniformly in the parameter
$\eps$. We also need to bound uniformly the norm of the harmonic extension
operator from $\RH^1(\Omega^\eps)$ to $M$, and to have a uniform
Poincaré--Wirtinger inequality for
some topological perturbations of geodesically convex
subsets of $M$. We point out that
the usual sufficient conditions in term of conditions on tubular neighbourhoods
of the boundary and inner cone conditions are not satisfied in our case,
nevertheless we can use the structure of the problem to find the relevant bounds.

In Section \ref{sec:stekprop}, we prove boundedness properties for the
Steklov eigenvalues and eigenfunctions of the domains $\Omega^\eps$. More
previsely, we prove that for every fixed $k$, $\sigma_k^\eps$ is bounded in
$\eps$, and that the $\RL^\infty$ norm of $\ukn$ is also bounded uniformly.

Section \ref{sec:homo} is dedicated to the proof of Theorem \ref{thm:homo}. The
proof proceeds in three main steps. The first one is to show that for every $k$,
the eigenvalues
$\sigma_k(\Omega^\eps)$ are bounded as $\eps \to 0$ as well as to show that the families
of harmonic extensions $\Ukn$ are bounded in $\RH^1(M)$. This gives us the
existence along a subsequence of a limit $\sigma_k^\eps \to \lambda$ and of a
$\RH^1(M)$ weak limit $\Ukn \to \phi$. The second step consists in studying the
weak formulations to show that the pair $(\phi,\lambda)$ is a solution to Problem
\eqref{prob:laplace}. In the last step, we show that there is no mass lost in the
process, and therefore that indeed $\lambda = \lambda_k(M,g,\beta)$.

In Section \ref{sec:isoineq}, we prove the isoperimetric inequality stated in
Theorem \ref{thm:main} and give as a corollary explicit lower bounds on the
maximiser for Steklov eigenvalues in terms of known bounds for Laplace
eigenvalues. 

Finally, in Section \ref{sec:fbms}, we provide a proof of Theorem
\ref{thm:tetrahedral}. This proof uses symmetries of the free boundary minimal
surfaces, and properties of the
nodal sets of first eigenfunctions.

\subsection*{Acknowledgements}
We want to thank A. Fraser and R. Schoen for very useful discussions that led to
better clarity in Appendix \ref{sec:comment}. 
This project stemmed from a reading group at UCL, as such the authors want to
thank in particular C. Belletini, F. Hiesmayr, M. Levitin, L. Parnovski, and F.
Schulze for useful discussions and remarks as the project started, as well as I.
Polterovich for raising the question leading to this paper. We would also
like to thank H. Matthiesen and R. Petrides for very useful
discussions about the state of the art concerning maximisers of
normalised Steklov eigenvalues and free boundary minimal surfaces.
We are grateful to M. Schulz for generously letting them use the
pictures from his website~\cite{MarioWeb}. Finaly, the authors would like to thank M. Karpukhin for very useful comments in the final stages of writing this paper.

AG acknowledges the support of NSERC.
The research of JL was supported by EPSRC grant EP/P024793/1 and the NSERC
Postdoctoral Fellowship. 

\section{The homogenisation construction}

\label{sec:homodef}

\subsection{Notation}

From this section on, we denote by $c$ and $C$ positive constants that may depend
only on the manifold $(M,g)$, the dimension, and the positive smooth function
$\beta\in C^\infty(M)$. Similarly, the homogenisation construction depends on a
parameter $\eps > 0$ which must be chosen smaller than $\eps_0 > 0$, a value
also depending only on $(M,g)$, the dimension, and $\beta$. 
The precise values of $c, C$ and $\eps_0$ may
change from line to line, but changes occur only a finite number of times so that at
the end $0 < \eps_0, c, C < \infty$.

We will reserve the letters $\phi, \lambda$ for general eigenfunctions and eigenvalues
of Problem~\eqref{prob:laplace}, and $\phi_k$ and $\lambda_k$ representing
specifically the $k$th ones. Similarly, we reserve $u^{(\eps)}$ and
$\sigma^{(\eps)}$ for Steklov eigenvalues of the sequence of domains
$\Omega^\eps$. We drop in this notation any specific reference to $M$, to the
metric $g$ and to the weight $\beta$ as they are kept fixed. We assume that
eigenfunctions $\ukn$ and $\phi_k$ are orthonormal, with respect to
$\RL^2(\del\Omega^\eps)$ and $\RL^2(M,\beta \de \mu_g)$ respectively.
We make use of various asymptotic notation.
\begin{itemize}
  \item Indiscriminately, writing $f = \bigo g$ or $f \ll g$ means that there
    exists $C, \eps_0 > 0$ such that $\abs{f(x)} \le C g(x)$ for all $0 < x <
    \eps_0$. 
  \item Writing $f = \smallo g$ means that $\frac{f}{g} \to 0$ as $\eps \to 0$.
  \item Writing $f \asymp g$ means that both $f \ll g$ and $g \ll f$.
  \item Indices in the asymptotic notation (e.g. $f = O_M(g)$ or $f \ll_k g$)
    means that the implicit constants, the range of validity or the rate of convergence to $0$ for
      $o$ depends only on those quantities. We use $M$ as an index to represent
      dependence both on the manifold $M$ and on the metric $g$. 
\end{itemize}

\subsection{Geodesic polar coordinates}

Some of our proofs are formulated using geodesic polar coordinates,
so let us recall their construction, see \cite[Chapter XII.8]{chaveleigriem}.
For a point $p \in M$ and $\delta <
\inj(M)$, the exponential map is a diffeomorphism from the ball of radius
$\delta$ in $T_pM$ to the geodesic ball $B_{\delta}(p) \subset M$. In
$B_\delta(p)$, we use the polar coordinates $(\rho,\theta)$, where $\rho$ is the
geodesic distance from $p$ and $\theta$ is a unit tangent vector in $T_pM$. 

We recall that in those coordinates, the metric reads
\begin{equation}
  \label{eq:metricpolar}
  g(\rho,\theta) = \de \rho^2 + \rho^2(1 + h(\rho,\theta))g_{\S^{d-1}},
\end{equation}
where
\begin{equation}
  \label{eq:perturbpolar}
  \norm{h(\rho,\theta)}_{\RC^1(B_\delta(p))} = O_M(\delta).
\end{equation}
We record as well that the volume element can be written in
 these coordinates as
 \begin{equation}
   \label{eq:areageoball}
   \de V = \rho^{d-1}\left(1 + \bigo[M]{\delta^2}\right) \de \rho \de A_{\S^{d-1}}
 \end{equation}
 and for any geodesic sphere of radius $r \le \delta$, its
 area element is of the form
 \begin{equation}
   \label{eq:boundarygeoball}
   \de A = r^{d-1}\left(1 + \bigo[M]{r^2}\right) \de A_{\S^{d-1}}.
 \end{equation}
 Compactness of $M$ ensures that the implicit constants in
 \eqref{eq:perturbpolar}, \eqref{eq:areageoball} and \eqref{eq:boundarygeoball} 
 can be chosen independently of $p$.

 \subsection{Homogenisation by obstacles}

 For every $\eps > 0$, let $\BS^\eps$ be a maximal $\eps$-separated subset of
 $M$, and let $\BV^\eps$ be the Vorono\u{\i} tesselation associated with
 $\BS^\eps$, that is the set $\BV^\eps:= \set{V_p^\eps : p \in \BS^\eps}$, with
 \begin{equation}
   V_p^\eps := \set{x \in M : \dist(x,p) \le \dist(x,q) \text{ for all } q \in
   \BS^\eps}.
 \end{equation}
 We note that for $\eps < \eps_0$ and  every $p \in \BS^\eps$, $V_p^\eps$ is a domain with piecewise smooth
 boundary, and that
 \begin{equation}
   \Vol_g(V_p^\eps) \asymp_M \eps^d.
 \end{equation}
Indeed, by maximality of the $\eps$-separated set $\BS^\eps$ we have that
$B_{\eps/2}(p) \subset V_p^\eps \subset B_{3\eps}(p)$. 
Let $\beta\in C^\infty(M)$ be a smooth positive function. For every $p \in
\BS^\eps$, let $r_{\eps,p} > 0$ be such that
 \begin{equation}
   \label{eq:holedef}
   \CH^{d-1}(\del B_{r_{\eps,p}}(p)) = \beta(p) \Vol_g(V_p^\eps).
 \end{equation}
 It also follows from \eqref{eq:areageoball} and \eqref{eq:boundarygeoball} that
 \begin{equation}
   r_{\eps,p}  \asymp_{M,\beta} \eps^{\frac{d}{d-1}}.
 \end{equation}
 Since the previous display holds uniformly for $p\in M$, we often abuse notation and
 write $r_\eps$ for $r_{\eps,p}$. We set 
 \begin{equation}
   \BT^\eps := \bigcup_{p \in \BS^\eps} B_{r_\eps}(p),
 \end{equation}
 $\Omega^\eps = M \setminus \BT^\eps$, and $Q_p^\eps = V_p^\eps \setminus
 B_{r_\eps}(p)$. See Figure \ref{pic:tesselation} for a depiction of this
 construction.

\begin{figure}\centering
  \pgfmathsetmacro{\xmin}{0.8}
\pgfmathsetmacro{\xmax}{5.3}
\pgfmathsetmacro{\ymin}{1.4} 
\pgfmathsetmacro{\ymax}{3.65}
\pgfmathsetmacro{\picscale}{\textwidth/1cm/(\xmax-\xmin)}
\begin{tikzpicture}[line cap=round,line join=round ,scale=\picscale]
\clip(\xmin,\ymin)rectangle(\xmax,\ymax);
\foreach[count=\i]\x/\y in {
99.0000 / 99.0000,
0.6096 / 60.8493,
5.9364 / 2.6704,
-0.4053 / 1.6624,
-27.1042 / 1.8572,
0.4487 / 2.2047,
1.3643 / 2.8791,
0.7112 / 44.5919,
1.5494 / 4.0190,
-0.7290 / 3.4274,
0.4282 / 4.0519,
0.8978 / 3.7231,
0.4976 / 2.8283,
0.8954 / 3.1083,
6.0167 / 0.8534,
3.4973 / 2.8372,
1.9175 / 3.2107,
1.8687 / 3.8060,
2.4685 / 2.8573,
2.9023 / 3.1604,
2.6259 / 14.6407,
3.5537 / 4.0497,
2.5221 / 4.0184,
2.9410 / 3.7241,
4.0463 / 3.1827,
4.4999 / 2.8085,
5.1955 / 3.0527,
4.0751 / 3.7562,
4.4295 / 2.1858,
4.4991 / 1.0416,
5.0343 / 1.9343,
5.0993 / 1.3502,
1.5437 / -40.5229,
2.4068 / 2.3871,
4.3855 / 3.9138,
5.0113 / 3.6916,
4.1768 / 2.0309,
4.0676 / 1.5091,
3.5147 / 2.3445,
3.0435 / 1.9362,
4.4199 / 0.6617,
3.9331 / 0.3971,
4.9018 / 0.2668,
5.9855 / -33.3508,
4.4396 / -13.2896,
0.9318 / 0.1271,
0.4662 / 0.4677,
0.4691 / 1.0128,
1.9548 / 0.3359,
1.6063 / 0.5361,
3.4034 / 1.2891,
3.0622 / 1.5700,
2.5110 / 1.2524,
3.4369 / 0.6221,
2.8931 / 0.3118,
2.5141 / 0.4982,
2.0132 / 1.4753,
1.0290 / 1.9391,
1.1280 / 1.3263,
1.4891 / 1.1261,
1.4955 / 2.2936,
2.0091 / 2.1308 
}{\coordinate(\i)at(\x,\y);}
\fill[cyan!20](40)--(34)--(19)--(20)--(16)--(39)--cycle;
\draw(50)--(46)--(33)--(49)--cycle;
\draw(55)--(45)--(42)--(54)--cycle;
\draw(45)--(42)--(41)--(43)--(44)--cycle;
\draw(60)--(50)--(46)--(47)--(48)--(59)--cycle;
\draw(60)--(50)--(49)--(56)--(53)--(57)--cycle;
\draw(56)--(53)--(52)--(51)--(54)--(55)--cycle;
\draw(54)--(42)--(41)--(30)--(38)--(51)--cycle;
\draw(43)--(15)--(32)--(30)--(41)--cycle;
\draw(59)--(48)--(4)--(6)--(58)--cycle;
\draw(62)--(57)--(60)--(59)--(58)--(61)--cycle;
\draw(62)--(34)--(40)--(52)--(53)--(57)--cycle;
\draw(52)--(40)--(39)--(37)--(38)--(51)--cycle;
\draw(38)--(30)--(32)--(31)--(29)--(37)--cycle;
\draw(13)--(6)--(4)--(5)--(10)--cycle;
\draw(61)--(7)--(14)--(13)--(6)--(58)--cycle;
\draw(62)--(34)--(19)--(17)--(7)--(61)--cycle;
\draw(40)--(34)--(19)--(20)--(16)--(39)--cycle;
\draw(39)--(16)--(25)--(26)--(29)--(37)--cycle;
\draw(31)--(3)--(27)--(26)--(29)--cycle;
\draw(14)--(12)--(11)--(10)--(13)--cycle;
\draw(18)--(9)--(12)--(14)--(7)--(17)--cycle;
\draw(24)--(20)--(19)--(17)--(18)--(23)--cycle;
\draw(28)--(22)--(24)--(20)--(16)--(25)--cycle;
\draw(36)--(27)--(26)--(25)--(28)--(35)--cycle;
\draw(12)--(9)--(8)--(11)--cycle;
\draw(23)--(18)--(9)--(8)--(2)--(21)--cycle;
\draw(24)--(22)--(21)--(23)--cycle;
\begin{scope}[fill=black!5]
\filldraw(0.3748,-0.1144)node(p1){$\scriptstyle\bullet$};
\filldraw(1.4959,-0.0975)node(p2){$\scriptstyle\bullet$};
\filldraw(2.4049,-0.1067)node(p3){$\scriptstyle\bullet$};
\filldraw(3.4628,0.0136)node(p4){$\scriptstyle\bullet$};
\filldraw(4.4304,0.0494)node(p5){$\scriptstyle\bullet$};
\filldraw(5.3862,0.0802)node(p6){$\scriptstyle\bullet$};
\filldraw(-0.0582,0.7366)node(p7){$\scriptstyle\bullet$};
\filldraw(0.9933,0.7311)node(p8){$\scriptstyle\bullet$}circle(0.1513);
\filldraw(2.0981,0.9507)node(p9){$\scriptstyle\bullet$}circle(0.1402);
\filldraw(2.9263,0.9540)node(p10){$\scriptstyle\bullet$}circle(0.1413);
\filldraw(3.9117,1.0035)node(p11){$\scriptstyle\bullet$}circle(0.1384);
\filldraw(5.0223,0.7718)node(p12){$\scriptstyle\bullet$}circle(0.1508);
\filldraw(0.5812,1.5974)node(p13){$\scriptstyle\bullet$}circle(0.1711);
\filldraw(1.5617,1.7557)node(p14){$\scriptstyle\bullet$}circle(0.1344);
\filldraw(2.4611,1.7614)node(p15){$\scriptstyle\bullet$}circle(0.1342);
\filldraw(3.6406,1.8216)node(p16){$\scriptstyle\bullet$}circle(0.1295);
\filldraw(4.5841,1.6241)node(p17){$\scriptstyle\bullet$}circle(0.1290);
\filldraw(5.5417,1.7308)node(p18){$\scriptstyle\bullet$};
\filldraw(-0.0447,2.5829)node(p19){$\scriptstyle\bullet$};
\filldraw(0.9950,2.5014)node(p20){$\scriptstyle\bullet$}circle(0.1364);
\filldraw(1.8595,2.6951)node(p21){$\scriptstyle\bullet$}circle(0.1306);
\filldraw(3.0150,2.5436)node(p22){$\scriptstyle\bullet$}circle(0.1451);
\filldraw(3.9991,2.5783)node(p23){$\scriptstyle\bullet$}circle(0.1288);
\filldraw(4.9367,2.4722)node(p24){$\scriptstyle\bullet$}circle(0.1610);
\filldraw(0.3577,3.4067)node(p25){$\scriptstyle\bullet$}circle(0.1795);
\filldraw(1.4354,3.4025)node(p26){$\scriptstyle\bullet$}circle(0.1373);
\filldraw(2.3620,3.4785)node(p27){$\scriptstyle\bullet$}circle(0.1427);
\filldraw(3.4811,3.4016)node(p28){$\scriptstyle\bullet$}circle(0.1612);
\filldraw(4.6306,3.3439)node(p29){$\scriptstyle\bullet$}circle(0.1423);
\filldraw(5.5186,3.5999)node(p30){$\scriptstyle\bullet$};
\filldraw(-0.1497,4.3470)node(p31){$\scriptstyle\bullet$};
\filldraw(1.0102,4.3389)node(p32){$\scriptstyle\bullet$};
\filldraw(2.0750,4.3609)node(p33){$\scriptstyle\bullet$};
\filldraw(2.9758,4.3521)node(p34){$\scriptstyle\bullet$};
\filldraw(4.0701,4.4480)node(p35){$\scriptstyle\bullet$};
\filldraw(4.9351,4.2015)node(p36){$\scriptstyle\bullet$};
\end{scope}

\draw[latex-latex,red](p14.center)--(p15.center)node[sloped,above,pos=0.65]{$>\varepsilon$};
\draw[latex-latex,red](p22.center)--($(p22)+(0:0.1451cm)$)
node[below left=-2pt]{$r_{\varepsilon}$}
node[above left=2em,cyan!66!black]{$Q_p^{\varepsilon}$}
;
\end{tikzpicture}
\caption{Vorono\u{\i} tesselation associated with a maximal $\eps$-separated subset}%
\label{pic:tesselation}%
\end{figure}
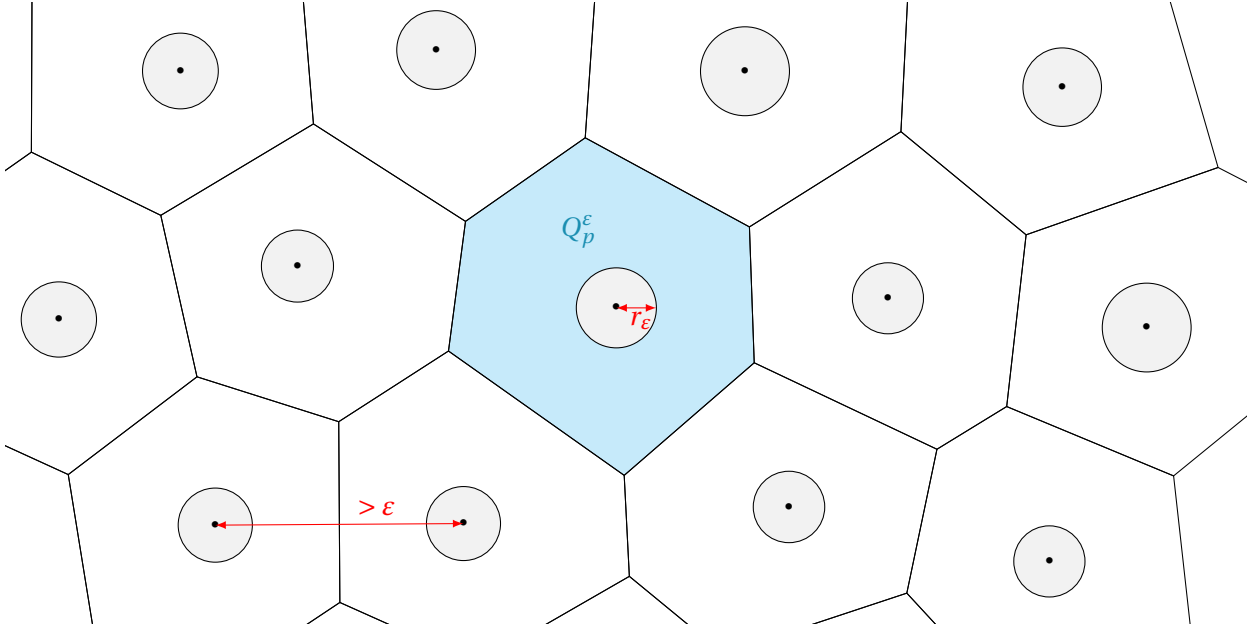

Furthermore, we have that $\dist(B_{r_{\eps,p}}(p),B_{r_{\eps,q}}(q)) \ge
\eps - \bigo[M,\beta]{\eps^{\frac{d}{d-1}}}$ for all $p \ne q \in \BS^\eps$. We see that by
construction, for every $0 < \eps < \eps_0$,
\begin{equation}
  \CH^{d-1}(\del \Omega^\eps) = \sum_{p\in\BS^\eps}\beta(p)\Vol_g(V_p^\eps).
\end{equation}
It also easy to see that the measure $\de A_g$ obtained on $M$ by
restriction of the
Hausdorff measure $\CH^{d-1}$ to $\del \Omega^\eps$ converges weak-$*$ to
the weighted Lebesgue measure $\beta \de \mu_g$ on $M$. That is, for each continuous function $f$ on $M$,
\begin{equation}
  \label{eq:weakstar}
\int_{\partial\Omega^\eps}f \de A_g \xrightarrow{\eps\to
0}\int_M f\,\beta\de \mu_g.
\end{equation}
This already addresses the first part of Theorem \ref{thm:approx}, and by
considering $f \equiv 1$ in \eqref{eq:weakstar} we see that
\begin{equation}
  \CH^{d-1}(\del\Omega^\eps) \xrightarrow{\eps \to 0} \int_M \beta \de \mu_g.
\end{equation}
We study the sequence of eigenvalue problems on $\Omega^\eps$
\begin{equation}
  \label{prob:steklovhom}
  \begin{cases}
    \Delta u^{(\eps)} = 0 & \text{in } \Omega^\eps; \\
    \del_\nu u^{(\eps)} = \sigma^{(\eps)} u^{(\eps)} & \text{on } \del
    \Omega^\eps,
  \end{cases}
\end{equation}
and for every eigenfunction $\ukn$, we define $\Ukn : M \to \R$ as the unique
function equal to $\ukn$ on
$\overline{\Omega^\eps}$ and harmonic in $\BT^\eps$.
The next theorem is the central technical result of this paper, and is in the
flavour of the main theorem of \cite{GHL}. It is also readily seen to imply
directly Theorem \ref{thm:approx} by providing the appropriate sequence
$\Omega^\eps$.

\begin{theorem}
  \label{thm:homo}
  The eigenvalues $\sigma_k^{(\eps)}$ of Problem \eqref{prob:steklovhom}
  converge as $\eps \to 0$ to the eigenvalue $\lambda_k(M,g,\beta)$ 
  defined in Problem \eqref{prob:laplace}. Up to choosing a subsequence, the
  extensions $\Ukn$ to $M$ of the eigenfunctions $\ukn$ converge
  weakly in $\RH^1(M)$ to the corresponding Laplace eigenfunction $f_k$ on $M$,
  where $f_k$ is normalised to $\RL^2(M,\beta \de \mu_g)$ norm $1$.
\end{theorem}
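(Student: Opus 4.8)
The plan is to follow the three-step scheme outlined in the paper's introduction, working entirely with the variational (weak) formulations of the two eigenvalue problems. First I would establish the uniform a priori bounds: using the test-function characterisation of $\sigma_k^{(\eps)}$ together with the weak-$*$ convergence $\de A_{\del\Omega^\eps}\to\beta\de\mu_g$, I would build trial functions on $\Omega^\eps$ by restricting smooth functions on $M$ (in particular the first $k$ Laplace eigenfunctions $\phi_0,\dots,\phi_k$), and control their Dirichlet energy on $\Omega^\eps$ by their Dirichlet energy on $M$ plus a negligible contribution from the removed balls $B_{r_\eps}(p)$, whose total volume is $O(\eps^{d/(d-1)})$. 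This yields $\limsup_{\eps\to 0}\sigma_k^{(\eps)}\le\lambda_k(M,g,\beta)$, and in particular $\sigma_k^{(\eps)}=O_k(1)$. Combined with the uniform bound on the harmonic-extension operator $\RH^1(\Omega^\eps)\to\RH^1(M)$ and the uniform trace bounds asserted in Section~\ref{sec:ineq} and Section~\ref{sec:stekprop}, the extensions $\Ukn$ are bounded in $\RH^1(M)$; after passing to a subsequence we get $\sigma_k^{(\eps)}\to\lambda$ and $\Ukn\rightharpoonup\phi$ weakly in $\RH^1(M)$, with strong convergence in $\RL^2(M)$ by Rellich.

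Second I would identify the weak limit. The Steklov eigenfunction equation in weak form reads $\int_{\Omega^\eps}\langle\nabla\ukn,\nabla v\rangle\de\mu_g=\sigma_k^{(\eps)}\int_{\del\Omega^\eps}\ukn\, v\, \de A_g$ for all $v\in\RH^1(\Omega^\eps)$. Testing against $v$ the restriction of an arbitrary $\psi\in\RC^\infty(M)$, the left side converges to $\int_M\langle\nabla\phi,\nabla\psi\rangle\de\mu_g$ because the gradients of $\Ukn$ on the small balls carry vanishing energy (here the harmonic extension is exactly what makes $\nabla\Ukn$ small on $\BT^\eps$: its energy there is controlled by the boundary data, hence by the uniform $\RL^\infty$ bound on $\ukn$ times $r_\eps^{d-1}$ summed over the balls). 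For the right side I would use the key geometric input \eqref{eq:holedef}, namely $\CH^{d-1}(\del B_{r_\eps}(p))=\beta(p)\Vol_g(V_p^\eps)$: this says that the boundary measure, localised near each $p$, has total mass $\beta(p)\Vol_g(V_p^\eps)$, so that $\int_{\del\Omega^\eps}\Ukn\,\psi\,\de A_g$ is a Riemann-sum approximation to $\int_M\phi\,\psi\,\beta\de\mu_g$. Making this rigorous requires showing $\ukn$ does not oscillate too much inside a single cell $Q_p^\eps$ of diameter $O(\eps)$ — a uniform Poincaré–Wirtinger inequality on the perforated cells (provided in Section~\ref{sec:ineq}) controls $\|\ukn-\overline{\ukn}^{\,p}\|$ in terms of $\eps\|\nabla\ukn\|$, and since the total Dirichlet energy is bounded this error is $o(1)$. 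Passing to the limit then shows $(\phi,\lambda)$ solves $-\Delta\phi=\lambda\beta\phi$ on $M$ in the weak sense, so $\lambda$ is some Laplace eigenvalue $\lambda_j(M,g,\beta)$ with eigenfunction $\phi$ (and $\phi\ne 0$ since, by the normalisation $\int_{\del\Omega^\eps}|\ukn|^2\de A_g=1$ and the same Riemann-sum argument, $\int_M|\phi|^2\beta\de\mu_g=1$).

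Third, the no-loss-of-mass argument upgrades $\lambda\ge\lambda_k$ to equality. I would argue inductively on $k$: suppose the claim holds for all indices $<k$. The orthonormality of $\{\ukn\}_{j\le k}$ in $\RL^2(\del\Omega^\eps)$ passes to the limit (again via \eqref{eq:holedef} and Poincaré–Wirtinger) to give orthonormality of the limits $\{\phi_j\}_{j\le k}$ in $\RL^2(M,\beta\de\mu_g)$; since these are genuine Laplace eigenfunctions with eigenvalues $\le\lambda$, the min-max principle for \eqref{prob:laplace} forces $\lambda\ge\lambda_k(M,g,\beta)$. Combined with the upper bound from step one, $\lambda=\lambda_k(M,g,\beta)$, and $\phi=f_k$ is the normalised $k$th Laplace eigenfunction. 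Since the limit is independent of the subsequence, the full sequence converges.

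\emph{Main obstacle.} The crux is the right-hand-side limit in step two: transferring the boundary integrals $\int_{\del\Omega^\eps}(\cdot)\,\de A_g$ to bulk integrals $\int_M(\cdot)\,\beta\de\mu_g$. The weak-$*$ convergence \eqref{eq:weakstar} handles continuous integrands, but here the integrand involves $\ukn$ itself, which lives only in $\RH^1$ and changes with $\eps$; one must simultaneously exploit (i) the defining area relation \eqref{eq:holedef} cell by cell, (ii) the uniform $\RL^\infty$ bound on $\ukn$ from Section~\ref{sec:stekprop} (so that the $\ne$-smooth part of the integrand is bounded), (iii) the uniform Poincaré–Wirtinger inequality on $Q_p^\eps$ to replace $\ukn$ by its cell average, and (iv) the $\RL^2(M)$-strong convergence of $\Ukn$ to pass the cell averages to $\phi$. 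Keeping the geometric distortion factors from \eqref{eq:areageoball}–\eqref{eq:boundarygeoball} under control uniformly in $p$ — so that the errors accumulated over the $\asymp\eps^{-d}$ cells still sum to $o(1)$ — is the delicate bookkeeping that the proof must carry out carefully.
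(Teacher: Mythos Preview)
Your three-step outline matches the paper's structure exactly (upper bound via trial functions and compactness; identification of the weak limit; induction via orthogonality and no-mass-loss). The substantive difference is in how you convert the boundary integral $\int_{\del\Omega^\eps}\ukn v\,\de A_g$ to the bulk integral $\int_M\phi v\beta\,\de\mu_g$ in step two. The paper does \emph{not} use a cell-average Riemann-sum argument. Instead it introduces on each perforated cell $Q_p^\eps$ a corrector $\Psi_p^\eps$ solving the Neumann problem $\Delta\Psi_p^\eps=c_{\eps,p}$ in $Q_p^\eps$, $\del_\nu\Psi_p^\eps=1$ on $\del B_{r_\eps}(p)$, $\del_\nu\Psi_p^\eps=0$ on $\del V_p^\eps$, with $c_{\eps,p}=\beta(p)+O(\eps^{d/(d-1)})$. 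Its weak formulation yields the exact identity
\[
\int_{\del B_{r_\eps}(p)} w\,\de A_g \;=\; \int_{Q_p^\eps}\nabla\Psi_p^\eps\cdot\nabla w\,\de\mu_g \;+\; c_{\eps,p}\int_{Q_p^\eps} w\,\de\mu_g
\]
for every $w\in\RH^1(Q_p^\eps)$. Summed over $p$, the second term tends to $\int_M\phi v\beta\,\de\mu_g$ by strong $\RL^2$ convergence, and the first is $o(1)$ via the key estimate $\|\nabla\Psi_p^\eps\|_{\RL^2(Q_p^\eps)}=O(\eps^{d/2})$, itself obtained from Poincar\'e--Wirtinger plus the trace bound. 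The same device with $w=(\ukn)^2$ gives the no-mass-loss lemma.

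Your direct route has a gap as written. Replacing $\ukn$ by its cell average on $\del B_{r_\eps}(p)$ amounts to bounding $\sum_p\int_{\del B_{r_\eps}(p)}(\ukn-\overline{\ukn}^{\,p})v\,\de A_g$, and for this you need control of $\ukn-\overline{\ukn}^{\,p}$ in a \emph{trace} norm on the small sphere, whereas Poincar\'e--Wirtinger only controls the \emph{bulk} $\RL^2(Q_p^\eps)$ norm. Bridging the two requires a cell-local trace inequality with the correct scaling in $\eps$ (the sphere has radius $\asymp\eps^{d/(d-1)}$ sitting inside a cell of diameter $\asymp\eps$), which you do not supply; the $\RL^\infty$ bound alone is too crude to sum to $o(1)$ over $\asymp\eps^{-d}$ cells. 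Such a local trace can be derived by radial integration in geodesic polar coordinates on the annulus $A_p^\eps$, so your approach is salvageable --- indeed the two error terms coincide, since $\int_{\del B_{r_\eps}(p)}(w-m_w)\,\de A_g=\int_{Q_p^\eps}\nabla\Psi_p^\eps\cdot\nabla w\,\de\mu_g$ --- but the corrector $\Psi_p^\eps$ is the standard homogenisation device and packages the boundary-to-bulk conversion in a single stroke, which is what the paper exploits.
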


The proof is split in two main steps and is the subject of Section \ref{sec:homo}.

The first step is to show that there is a subsequence
$(\sigma_k^{(\eps)},\Ukn)$ converging to a weak solution $(\lambda,\phi)$ of the
weighted Laplace eigenvalue problem. In other words, the pair $(\lambda,\phi)$
satisfies
\begin{equation}
  \label{eq:weaklaplace}
  \forall v \in \RH^1(M), \qquad \int_{M} \nabla v \cdot \nabla \phi \de \mu_g = \lambda \int_M v \, \phi\,
  \beta \de \mu_g.
\end{equation}

The second step consists in proving that $(\lambda,\phi)$ has to be the $k$th
eigenpair of the weighted Laplace eigenvalue problem. This will be done by
showing that in the limit the functions $\Ukn$ do not lose any mass. Physically,
this can be interpreted as an instance of the Fermi exclusion principle, see.
e.g. the work of Colin de Verdi\`ere~\cite{cdv1986cmh} for an early application of
such an idea to create manifolds whose first Laplace eigenvalue have large
multiplicity.

\section{Analytic properties of perforated domains}
\label{sec:ineq}

In this section we describe analytic properties of the perforated manifolds
$\Omega^\eps$ and of the Vorono\u{\i} cells $V_p^\eps$. More precisely, we show 
that trace and extension operators are well behaved in
the homogenisation limiting process. 
We stress that many of the inequalities we show here would be
obviously satisfied for a fixed domain $\Omega$. However, the usual sufficient
conditions under which those inequalities would hold uniformly for a family of
domains
$\Omega^\eps$ either are not satisfied, or it is nontrivial to show
that they are indeed satisfied. We start by proving three lemmata about norms of
trace operators. We denote the annuli $A_p^\eps := B_{\eps/2}(p) \setminus
B_{r_\eps}(p)$.

The statement of Lemma \ref{lem:euctoriem} below is a generalisation of
\cite[Proposition 5.1]{bgt} for domains in a closed manifold. For domains
$\Omega \subset M$ whose
boundary is not necessarily piecewise smooth, we denote by $\operatorname{Per}(\Omega,M)$ their perimeter in $M$, which
corresponds to the Hausdorff measure of their reduced boundary $\del^* \Omega$.
Note that the topological boundary may in general be larger than the reduced
boundary.

\begin{lemma}
  \label{lem:euctoriem}
  Let $\set{\Omega_n \subset M: n \in \N}$ be a sequence of open, bounded
  domains such that $\CH^{d-1}(\del \Omega_n)$ is uniformly bounded.
  Assume that there exists
  $Q, \delta > 0$ such that for all $n\in\N$ and $x \in \del \Omega_n$,
  \begin{equation}
    \label{eq:characterisation}
    \sup\set{\frac{\CH^{d-1}(\del^*E \cap \del^*\Omega_n)}{\CH^{d-1}(\del^*E \cap
    \Omega_n)} : E \subset \Omega_n \cap B_\delta(x), \operatorname{Per}(E,\Omega_n) < \infty} <
    Q.
  \end{equation}
  Then, the trace operators $\tau_n : \RBV(\Omega_n) \to \RL^1(\del \Omega_n)$
  are bounded uniformly in $n$.
\end{lemma}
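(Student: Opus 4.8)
The plan is to reduce the trace estimate for an arbitrary $\RBV$ function to the relative isoperimetric hypothesis \eqref{eq:characterisation} applied to superlevel sets via the coarea formula, and then to upgrade this localised statement to one that is uniform in $n$ using the compactness of $M$ together with a fixed partition of unity. In this sense the argument is the manifold, uniform-in-$n$ analogue of \cite[Proposition 5.1]{bgt}.

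First I would establish a coarea representation of the trace. Given $u \in \RBV(\Omega_n)$, after replacing $u$ by $\abs u$ we may assume $u \ge 0$; for $t > 0$ set $E_t := \set{u > t} \subset \Omega_n$, which has finite perimeter in $\Omega_n$ for almost every $t$. Two ingredients are then combined: the layer-cake formula $\tau_n u = \int_0^\infty \tau_n(\chi_{E_t}) \de t$ for the trace, and the fact that for $E \subset \Omega_n$ of finite perimeter one has $\tau_n(\chi_E) = \chi_{\del^* E \cap \del^* \Omega_n}$ $\CH^{d-1}$-almost everywhere on $\del^* \Omega_n$ — the latter because the inclusion $E \subset \Omega_n$ forces the measure-theoretic inner normals of $E$ and of $\Omega_n$ to coincide $\CH^{d-1}$-almost everywhere on $\del^* E \cap \del^* \Omega_n$. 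Together with the ordinary coarea formula for the total variation this gives
\begin{equation}
  \norm{\tau_n u}_{\RL^1(\del \Omega_n)} = \int_0^\infty \CH^{d-1}(\del^* E_t \cap \del^* \Omega_n) \de t, \qquad \abs{Du}(\Omega_n) = \int_0^\infty \CH^{d-1}(\del^* E_t \cap \Omega_n) \de t.
\end{equation}

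Next I would prove the localised bound: if in addition $u$ is supported in a single geodesic ball $B_\delta(x)$ with $x \in \del \Omega_n$, then $E_t \subset \Omega_n \cap B_\delta(x)$ for almost every $t$, so \eqref{eq:characterisation} applied to each $E_t$ gives $\CH^{d-1}(\del^* E_t \cap \del^* \Omega_n) \le Q\, \CH^{d-1}(\del^* E_t \cap \Omega_n)$; integrating in $t$ and using the two displays above yields $\norm{\tau_n u}_{\RL^1(\del\Omega_n)} \le Q\, \abs{Du}(\Omega_n) \le Q\, \norm{u}_{\RBV(\Omega_n)}$. To globalise, using compactness of $M$ I would fix once and for all a finite cover $M = \bigcup_{i=1}^N B_{\delta/4}(y_i)$ and a subordinate smooth partition of unity $\set{\psi_i}_{i=1}^N$ with $\sum_i \psi_i \equiv 1$ and $\norm{\nabla \psi_i}_{\RL^\infty(M)} \le C$. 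Given $u \in \RBV(\Omega_n)$, for each $i$ with $B_{\delta/4}(y_i) \cap \del \Omega_n \ne \emptyset$ pick $x_i$ in this intersection: then $\psi_i u$ is supported in $B_{\delta/4}(y_i) \subset B_\delta(x_i)$, lies in $\RBV(\Omega_n)$, and $\abs{D(\psi_i u)}(\Omega_n) \le \abs{Du}(\Omega_n) + C \norm{u}_{\RL^1(\Omega_n)}$; the remaining $\psi_i$ vanish in a neighbourhood of $\del \Omega_n$. Since $\tau_n u = \sum_i \tau_n(\psi_i u)$ on $\del \Omega_n$, applying the localised bound to each of the at most $N$ relevant pieces and summing gives $\norm{\tau_n u}_{\RL^1(\del \Omega_n)} \le N Q \bigl( \abs{Du}(\Omega_n) + C \norm{u}_{\RL^1(\Omega_n)} \bigr) \le C' \norm{u}_{\RBV(\Omega_n)}$ with $C' = C'(Q, \delta, M)$ independent of $n$, which is the claim.

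The step I expect to be the main obstacle is the first one: ensuring the trace operator is genuinely defined on all of $\RBV(\Omega_n)$ and correctly identifying $\tau_n(\chi_E)$ with $\chi_{\del^* E \cap \del^* \Omega_n}$, while being careful that $\CH^{d-1}$ must be restricted to the reduced boundary on which the $\RBV$ trace lives. It is precisely hypothesis \eqref{eq:characterisation}, together with the uniform bound on $\CH^{d-1}(\del \Omega_n)$, that underpins this — just as in \cite[Proposition 5.1]{bgt}. By contrast the covering and partition-of-unity argument is routine, compactness of $M$ being exactly what converts the local constant $Q$ into a global constant independent of $n$.
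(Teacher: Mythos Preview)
Your argument is correct, but it takes a different route from the paper. The paper does not redo the coarea/layer-cake computation on the manifold: instead, it observes that for $\delta$ small the exponential map identifies each geodesic ball $B_{2\delta}(x)$ with a Euclidean ball via a $\RC^1$-small perturbation of an isometry, under which the $\RBV$ and $\RL^1(\del\Omega_n)$ norms, as well as the Hausdorff measures appearing in \eqref{eq:characterisation}, change by controlled multiplicative constants. One can then invoke \cite[Proposition~5.1]{bgt} as a black box on the Euclidean side and transfer the bound back.

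What you do instead is essentially reprove \cite[Proposition~5.1]{bgt} intrinsically: the layer-cake identity for $\norm{\tau_n u}_{\RL^1}$ together with the $\RBV$ coarea formula reduces the local trace bound to the relative isoperimetric hypothesis applied to each superlevel set, and a fixed partition of unity globalises. This is more self-contained and avoids any discussion of how $\RBV$ norms behave under near-isometric charts; on the other hand, it requires that the $\RBV$ trace machinery (existence of $\tau_n$, the identification $\tau_n(\chi_E)=\chi_{\del^*E\cap\del^*\Omega_n}$ for $E\subset\Omega_n$, and the coarea formula) be available directly on $M$, which you correctly flag as the point needing care. The paper sidesteps this by pushing everything to $\R^d$ first. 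Your partition-of-unity step, choosing the cover at scale $\delta/4$ so that each $\psi_i u$ is supported in some $B_\delta(x_i)$ with $x_i\in\del\Omega_n$, is also more explicit than the paper's brief ``bring these estimates back to the manifold''.
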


\begin{proof}
  For any $\eta > 0$, since $M$ is compact, we can choose $\delta$ small enough so that for every $x
  \in M$, the metric in geodesic polar coordinates in $B_{2\delta}(x)$ reads
  \begin{equation}
    g = \de \rho^2 + \rho^2 (1 + h(\rho,\theta)) \de \theta^2,
  \end{equation}
  with $\abs{h(\rho,\theta)} + \abs{\nabla h(\rho,\theta)} \le \delta^{1/2} \le
  \eta$. In other word, the diffeomorphism provided by the inverse of the exponential map, from $B_{2\delta}(x)$ to the ball of
  radius $2\delta$ in $\R^d$ is a $\RC^1$ $\eta$-perturbation of an isometry. For any $n$, the norms of
  $\RL^1(\del \Omega_n \cap B_\delta(x))$ and $\RBV(\Omega_n \cap B_\delta(x))$ change uniformly continuously on
  bounded sets under $\RC^1$ diffeomorphisms, and the same is true of the
  Hausdorff measures in \eqref{eq:characterisation}. By \cite[Proposition
  5.1]{bgt}, \eqref{eq:characterisation} implies that the trace operators are
  uniformly bounded on the pullbacks to the balls, and by the above discussion
  we can bring these estimates back to the manifold.
  \end{proof}

\begin{lemma}
  \label{lem:traceBVL1}
  The trace operators $\tau^\eps : \RBV(\Omega^\eps) \to
  \RL^1(\del\Omega^\eps)$ are
   bounded uniformly in $\eps$. 
\end{lemma}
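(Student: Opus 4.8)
The plan is to deduce the uniform bound from trace estimates on the disjoint annuli $A_p^\eps = B_{\eps/2}(p)\setminus B_{r_\eps}(p)$, $p\in\BS^\eps$. The geodesic balls $B_{\eps/2}(p)$ are pairwise disjoint (they sit inside the disjoint Vorono\u{\i} cells $V_p^\eps$) and $r_\eps\ll\eps$, so these annuli are pairwise disjoint and each is contained in $\Omega^\eps$. For $u\in\RBV(\Omega^\eps)$ one has $\norm{\tau^\eps u}_{\RL^1(\del\Omega^\eps)}=\sum_{p\in\BS^\eps}\int_{\del B_{r_\eps}(p)}\abs{u}\de A_g$, so the lemma will follow, by summing over $p$ and using disjointness together with the additivity of $\abs{Du}$, from a \emph{single-hole inequality}
\[
  \int_{\del B_{r_\eps}(p)}\abs{u}\de A_g\;\le\; C\Bigl(\int_{A_p^\eps}\abs{u}\de\mu_g+\abs{Du}(A_p^\eps)\Bigr)
\]
with $C$ independent of $p$ and $\eps$. (Equivalently, one can invoke Lemma~\ref{lem:euctoriem}: its first hypothesis holds by the computation following \eqref{eq:weakstar}, and the geometric condition \eqref{eq:characterisation} is verified by an entirely analogous annulus-by-annulus estimate together with the scaling relation below.)

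The structural fact that makes this work is that \eqref{eq:holedef} was designed precisely so that $\CH^{d-1}(\del B_{r_\eps}(p))=\beta(p)\Vol_g(V_p^\eps)\asymp\eps^d\asymp\Vol_g(A_p^\eps)$ uniformly in $p$, balancing the scaling of the three terms. To prove the single-hole inequality I would first use \eqref{eq:metricpolar}--\eqref{eq:perturbpolar} to replace, up to a factor $1+\bigo[M]{\eps}$, the geodesic annulus $A_p^\eps$ by the Euclidean annulus of the same radii, then dilate by $r_\eps^{-1}$ to the model annulus $B_R(0)\setminus B_1(0)\subset\R^d$ with $R=\eps/(2r_\eps)\asymp\eps^{-1/(d-1)}$. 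Because $\CH^{d-1}(\del B_{r_\eps}(p))\asymp\Vol_g(A_p^\eps)$, the single-hole inequality becomes equivalent to: for all $R\ge 2$ and all $v\in\RBV(B_R\setminus B_1)$,
\[
  \int_{\del B_1}\abs{v}\de\CH^{d-1}\;\le\; C\Bigl(\frac{\CH^{d-1}(\del B_1)}{\Vol(B_R\setminus B_1)}\int_{B_R\setminus B_1}\abs{v}\de x+\abs{Dv}(B_R\setminus B_1)\Bigr),
\]
with $C$ \emph{independent of $R$}.

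To establish this model inequality, write $v_F$ for the mean value of $v$ on a set $F$ and split
\[
  \int_{\del B_1}\abs{v}\de\CH^{d-1}\le\int_{\del B_1}\abs{v-v_{B_2\setminus B_1}}\de\CH^{d-1}+\CH^{d-1}(\del B_1)\,\abs{v_{B_2\setminus B_1}-v_{B_R\setminus B_1}}+\CH^{d-1}(\del B_1)\,\abs{v_{B_R\setminus B_1}}.
\]
The first summand is $\lesssim\abs{Dv}(B_2\setminus B_1)$ by the Poincar\'e inequality and the trace theorem on the \emph{fixed} bounded Lipschitz domain $B_2\setminus B_1$; the last summand is at most $\tfrac{\CH^{d-1}(\del B_1)}{\Vol(B_R\setminus B_1)}\int_{B_R\setminus B_1}\abs{v}\de x$, i.e.\ the first term of the desired bound. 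The middle summand is the only place where the smallness of the inner hole relative to the annulus is genuinely used, and it is the step I expect to be the crux — morally one must rule out a $\RBV$ function concentrating near a vanishingly small hole. I would estimate it by dyadic chaining over the shells $S_k=B_{2^{k+1}}\setminus B_{2^k}$, $0\le k\lesssim\log_2 R$: since $S_k$ has Poincar\'e constant $\asymp 2^k$ and volume $\asymp 2^{kd}$, one gets $\abs{v_{S_k}-v_{S_{k+1}}}\lesssim 2^{-k(d-1)}\abs{Dv}(S_k\cup S_{k+1})$, and writing $v_{B_R\setminus B_1}$ as the convex combination of the $v_{S_k}$ with geometrically decaying weights $\asymp 2^{(k-\log_2 R)d}$ and summing the resulting double series yields $\abs{v_{B_2\setminus B_1}-v_{B_R\setminus B_1}}\lesssim\abs{Dv}(B_R\setminus B_1)$ with a constant independent of $R$ (the relevant series converge for every $d\ge 2$). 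Combining the three bounds proves the model inequality, hence the single-hole inequality, hence — summing over the disjoint annuli $A_p^\eps$ — the lemma.
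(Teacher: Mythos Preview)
Your argument is correct and takes a genuinely different route from the paper. The paper verifies the abstract criterion of Lemma~\ref{lem:euctoriem}, which requires a uniform bound on the ratio $\CH^{d-1}(\del^*E\cap\del\Omega^\eps)/\CH^{d-1}(\del^*E\cap\Omega^\eps)$ over all sets $E$ of finite perimeter in a ball of fixed radius $\delta$; this is done via a case analysis on whether a localised perimeter condition~\eqref{eq:localised} holds near each hole, together with the relative isoperimetric inequality and a coarea computation. Your proof bypasses this criterion entirely: you reduce to a single-hole trace inequality on disjoint annuli, transfer to a Euclidean model by geodesic polar coordinates, and control the mean-value discrepancy $\abs{v_{B_2\setminus B_1}-v_{B_R\setminus B_1}}$ by a dyadic chain of Poincar\'e inequalities on shells $S_k=B_{2^{k+1}}\setminus B_{2^k}$, exploiting the fact that the series $\sum_k 2^{-k(d-1)}$ converges for $d\ge 2$. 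This is more elementary and self-contained, and it makes the role of the calibration~\eqref{eq:holedef} (namely $\CH^{d-1}(\del B_{r_\eps})\asymp\Vol_g(A_p^\eps)$) transparent in the scaling step; the paper's approach, by contrast, packages the geometry into a robust criterion that is independently useful. One caveat: your parenthetical remark that condition~\eqref{eq:characterisation} can be verified by ``an entirely analogous annulus-by-annulus estimate'' is not obviously correct, since~\eqref{eq:characterisation} concerns arbitrary sets of finite perimeter rather than BV traces, and the paper's verification of it is not a simple localisation --- but this does not affect your main argument.
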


\begin{proof}
  In order to apply Lemma \ref{lem:euctoriem}, we
  need to find $\delta, Q > 0$ such that for all $x \in \del \Omega^\eps$ and
  all $\eps > 0$, \eqref{eq:characterisation} holds. 
  A simple volume comparison yields
  that there is $c > 0$ such that for all $\delta > 0$ and
  $x \in \del \Omega^\eps$,
  \begin{equation}
    \label{eq:volcomp}
    \#\set{ p \in \BS^\eps : Q_p^\eps \cap B_\delta(x)\ne \varnothing} \le c\left( \frac
      \delta \eps
    \right)^d.
  \end{equation}
  Combining \eqref{eq:volcomp} with \eqref{eq:holedef}, for any $E \subset \Omega^\eps \cap B_\delta(x)$ of finite
  perimeter,
  \begin{equation}
    \CH^{d-1}(\del^* E \cap \del \Omega^\eps) \le \CH^{d-1}(\del\Omega^\eps \cap
    B_\delta(x)) \le C \delta^d,
  \end{equation}
  where $C$ depends on $M$, $g$ and $\beta$.
  We may then assume that the supremum is taken over sets $E$ such that
  \begin{equation}
  \CH^{d-1}(\del^* E \cap \Omega^\eps) \le C \delta^d,
\end{equation}
otherwise the ratio
  in \eqref{eq:characterisation} is bounded by $1$. Observe that
  \begin{equation}
    \CH^{d-1}(\del^* E \cap \del \Omega^\eps) = \sum_{p \in \BS^\eps}
    \CH^{d-1}(\del^* E \cap \del B_{r_\eps}(p)).
  \end{equation}

  For $p \in \BS^\eps$ and $t \in (0,\eps/4)$, define
  \begin{equation}
    F_{p,t} = E \cap \set{x : \dist(x,\del B_{r_\eps}(p)) \le t}.
  \end{equation}
  
  Assume that for some $t \in (0,\eps/4)$ we have that
  \begin{equation}
    \label{eq:localised}
    \CH^{d-1}(\del^* F_{p,t} \cap Q_p^\eps) \le 2\CH^{d-1}(\del^* E \cap Q_p^\eps).
  \end{equation}
  Without loss of generality, we have chosen $\delta$ small enough so that the retraction
  on a geodesic ball of radius $\delta' < \delta$ is a $2$-Lipschitz map
  uniformly for $x \in M$. This means that
  \begin{equation}
    \label{eq:locestimate}
    \begin{aligned}
      \CH^{d-1}(\del^* E \cap \del B_{r_\eps}(p)) &= \CH^{d-1}(\del^* F_{p,t} \cap \del
      B_{r_\eps}(p)) \\
      &\le 2 \CH^{d-1}(\del^* F_{p,t} \cap Q_p^\eps) \\
      &\le 4 \CH^{d-1}(\del^* E \cap Q_p^\eps).
  \end{aligned}
  \end{equation}
  Let $\tilde \BS^\eps = \set{p \in \BS^\eps : \eqref{eq:localised} \text{ does
  not hold}}$. If $\tilde \BS^\eps$ is empty, our claim holds since in that
  case
  \eqref{eq:locestimate} implies that 
  \eqref{eq:characterisation} holds with $Q = 4$. Let $p \in \tilde \BS^\eps$.
  
  Setting
  \begin{equation}
    h_p(t) := \CH^{d-1}\left(\del^*F_{p,t} \cap \set{x : \dist(x,\del
        B_{r_\eps}(p)) =
    t}\right),
    \end{equation}
    the coarea formula gives $\del_t \Vol_g(F_{p,t}) = h_p(t)$. It follows from the
    relative isoperimetric inequality \cite[Theorem 5.6.2]{evansgariepy} that
    there is a constant $c > 0$ depending on $M$ such that
    \begin{equation}
      \begin{aligned}
        c \Vol_g(F_{p,t})^{\frac{d-1}{d}} &\le \CH^{d-1}(\del^*F_{p,t} \cap Q_p^\eps) \\ 
        &\le 2 h_p(t),
      \end{aligned}
    \end{equation}
  where the second inequality follows from \eqref{eq:localised} not holding at
  $p$. Integrating, we therefore have that
  \begin{equation}
    \label{eq:lowerbound}
    \begin{aligned}
      2\Vol_g(F_{p,\eps/4}) &= \left(\int_0^{\eps/4}
      \frac{h_p(t)}{\Vol_g(F_{p,t})^{\frac{d-1}{d}}} \de t\right)^d \\
      &\gg_{M} C \eps^d \\
      &\gg_{M,\beta} C \CH^{d-1}(\del B_{r_{\eps}}(p)).
    \end{aligned}
  \end{equation}
  On the other hand, it follows from the isoperimetric inequality and equation
  \eqref{eq:locestimate} that
  \begin{equation}
    \label{eq:upperbound}
    \begin{aligned}
      \sum_{p \in \tilde \BS^\eps} \Vol_g(E \cap Q_p^\eps) &\ll_{M,\beta}\CH^{d-1}(\del^*
    E)^{\frac{d}{d-1}}  \\
    &\ll_{M,\beta}  \left(\CH^{d-1}(\del^*E \cap \Omega^\eps) + \CH^{d-1}(\del^* E \cap \del
    \Omega^\eps)\right)^{\frac{d}{d-1}}  \\
    & \ll_{M,\beta}  \Biggl(\CH^{d-1}(\del^* E \cap \Omega^\eps) + \sum_{p \in \tilde
    \BS^\eps} \CH^{d-1}(
B_{r_\eps}(p)
  )\Biggr)^\frac{d}{d-1}.
  \end{aligned}
  \end{equation}
  Summing over $p \in  \tilde \BS^\eps$ in \eqref{eq:lowerbound} and inserting
 in \eqref{eq:upperbound}, we obtain $C$ depending only on $M$ and $\beta$ such that 
\begin{equation}
  1 \le C \Biggl(\sum_{p \in \tilde \BS^\eps} \CH^{d-1}(
B_{r_\eps}(p)
  )\Biggr)^{\frac{1}{d-1}}
  \left(1 + 
    \frac{\CH^{d-1}(\del^*E \cap \Omega^\eps)}{\sum_{p \in \tilde \BS^\eps}
  \CH^{d-1}(\del B_{r_\eps}(p))}\right)^{\frac{d}{d-1}}.
\end{equation}
It follows from the weak-$*$ convergence in \eqref{eq:weakstar} that for small enough $\eps$,
\begin{equation}
\CH^{d-1}(\del \Omega^\eps \cap
B_\delta(x)) < 2 \max_p \beta(p) \Vol_g(B_\delta(x)).
\end{equation}
This means that
we can choose $\delta$ small enough, depending on $M$ and $\beta$ but not on $\eps$ so that
\begin{equation}
  C \Biggl(\sum_{p \in \tilde \BS^\eps} \CH^{d-1}(\del
  B_{r_\eps}(p))\Biggr)^{\frac{d}{d-1}} \le \frac{1}{4},
\end{equation}
which means that 
\begin{equation}
  \label{eq:notlocal}
  1 \le 
  \frac{\CH^{d-1}(\del^*E \cap \Omega^\eps)}{\sum_{p \in \tilde \BS^\eps}
  \CH^{d-1}(\del B_{r_\eps}(p))}.
\end{equation}
Combining estimates \eqref{eq:locestimate} with \eqref{eq:notlocal} gives us
that for $\eps$ small enough,
\begin{equation}
  \CH^{d-1}(\del^*E \cap \del\Omega^\eps) \le 4 \CH^{d-1}(\del^* E \cap
  \Omega^\eps),
\end{equation}
establishing our claim.
\end{proof}
The following lemma follows from the previous one rather directly, but we state
it explicitly for ease of reference.
\begin{lemma}
  \label{lem:traceH1L2}
  The Sobolev trace operators $\gamma^\eps : \RH^1(\Omega^\eps) \to
  \RL^2(\del\Omega^\eps)$ are bounded uniformly in $\eps$. 
\end{lemma}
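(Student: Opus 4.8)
The plan is to derive this from Lemma~\ref{lem:traceBVL1} by the classical device of squaring, which converts an $\RL^2$ trace estimate into an $\RL^1$ trace estimate for the square. First I would observe that for $u \in \RH^1(\Omega^\eps)$ one has $u^2 \in \RBV(\Omega^\eps)$ with controlled norm: since $\nabla(u^2) = 2u\,\nabla u$, the Cauchy--Schwarz inequality gives
\[
  \norm{u^2}_{\RBV(\Omega^\eps)} = \norm{u}_{\RL^2(\Omega^\eps)}^2 + 2\int_{\Omega^\eps}\abs{u}\,\abs{\nabla u}\de\mu_g \le \norm{u}_{\RL^2(\Omega^\eps)}^2 + 2\norm{u}_{\RL^2(\Omega^\eps)}\norm{\nabla u}_{\RL^2(\Omega^\eps)} \le C\norm{u}_{\RH^1(\Omega^\eps)}^2,
\]
with $C$ an absolute constant.

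Second, I would check that the $\RBV$-trace of $u^2$ agrees with the square of the $\RH^1$-trace of $u$, i.e. $\tau^\eps(u^2) = (\gamma^\eps u)^2$ in $\RL^1(\del\Omega^\eps)$. For each fixed $\eps < \eps_0$ the set $\del\Omega^\eps$ is a disjoint union of smooth geodesic spheres, so $\Omega^\eps$ has smooth boundary; in particular $\RC^\infty(\overline{\Omega^\eps})$ is dense in $\RH^1(\Omega^\eps)$ and both $\gamma^\eps$ and $\tau^\eps$ are bounded, with constants a priori depending on $\eps$. Choosing $u_j \in \RC^\infty(\overline{\Omega^\eps})$ with $u_j \to u$ in $\RH^1(\Omega^\eps)$, one has $\tau^\eps(u_j^2) = u_j^2|_{\del\Omega^\eps} = (\gamma^\eps u_j)^2$; since $u_j \to u$ in $\RH^1$ forces $u_j^2 \to u^2$ in $\RBV(\Omega^\eps)$ (because $u_j\nabla u_j \to u\nabla u$ in $\RL^1$) and $\gamma^\eps u_j \to \gamma^\eps u$ in $\RL^2(\del\Omega^\eps)$, passing to the limit in this identity yields the claim.

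Combining the two steps with the \emph{uniform} bound of Lemma~\ref{lem:traceBVL1},
\[
  \norm{\gamma^\eps u}_{\RL^2(\del\Omega^\eps)}^2 = \norm{\tau^\eps(u^2)}_{\RL^1(\del\Omega^\eps)} \le \norm{\tau^\eps}\,\norm{u^2}_{\RBV(\Omega^\eps)} \le C\norm{u}_{\RH^1(\Omega^\eps)}^2
\]
with $C$ independent of $\eps$, which is exactly the assertion. The only point requiring any care is the compatibility of the two trace operators in the second step (and there the subtlety is to note that $\tau^\eps(u_j^2)\to\tau^\eps(u^2)$ uses $\RBV$-convergence of $u_j^2$, not merely $\RL^1$-convergence); everything else is immediate, and — as the phrasing of the lemma suggests — no information about the microstructure beyond Lemma~\ref{lem:traceBVL1} enters here.
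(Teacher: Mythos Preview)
Your proposal is correct and follows essentially the same route as the paper: square the function to pass from the $\RH^1\to\RL^2$ trace to the $\RBV\to\RL^1$ trace controlled by Lemma~\ref{lem:traceBVL1}. The paper bounds $\int 2|f||\nabla f|$ via Young's inequality rather than Cauchy--Schwarz, and it simply writes $\norm{f}_{\RL^2(\del\Omega^\eps)}^2=\norm{f^2}_{\RL^1(\del\Omega^\eps)}$ without pausing to justify the trace compatibility $\tau^\eps(u^2)=(\gamma^\eps u)^2$; your extra paragraph making this explicit via approximation by smooth functions is a welcome clarification, not a different argument.
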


\begin{proof}
  Observe first that if $f \in \RH^1(\Omega^\eps)$, then $f^2 \in
  \RBV(\Omega^\eps)$. Indeed, $\norm{f}_{\RL^2(\Omega^\eps)}^2 =
  \norm{f^2}_{\RL^1(\Omega^\eps)}$, and
  \begin{equation}
    \begin{aligned}
      \int_{\Omega^\eps} \abs{\nabla f^2} \de \mu_g &= \int_{\Omega^\eps} 2\abs{f
      \nabla f} \de \mu_g \\
      &\le \int_{\Omega^\eps} f^2 + \abs{\nabla f}^2 \de \mu_g. \\
  \end{aligned}
  \end{equation}
  We therefore have
  \begin{equation}
\begin{aligned}
  \norm{f}_{\RL^2(\del \Omega^\eps)}^2 &=
  \norm{f^2}_{\RL^1(\del\Omega^\eps)} \\ 
  &\le C \norm{f^2}_{\RBV(\Omega^\eps)} \\
  &\le 2 C \norm{f}_{\RH^1(\Omega^\eps)}^2,
\end{aligned}
  \end{equation}
  proving our claim.
\end{proof}

The next Lemma describes the behaviour of the operator of harmonic extension
inside the holes $\BT^\eps$.
\begin{lemma}
  \label{lem:harmextbounded}
  The harmonic extension operator $h^\eps : \RH^1(\Omega^\eps) \to
  \RH^1(\BT^\eps)$ has norm uniformly bounded in $\eps$. Furthermore,
  \begin{equation}
    \norm{h^\eps}_{\RH^1(\Omega^\eps)\to \RL^2(\BT^\eps)} \xrightarrow{\eps\to 0}0. 
  \end{equation}
\end{lemma}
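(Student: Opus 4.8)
The plan is to decouple the harmonic extension over the individual holes, reduce to a single local estimate on one hole, and then sum over the pairwise disjoint Vorono\u{\i} cells. For $\eps<\eps_0$ the balls $B_{r_\eps}(p)$, $p\in\BS^\eps$, are pairwise disjoint, so $h^\eps f$ restricted to $\BT^\eps$ equals, on each $B_{r_\eps}(p)$, the harmonic function with boundary trace $f|_{\del B_{r_\eps}(p)}$, which depends only on the values of $f$ on the annulus $A_p^\eps=B_{\eps/2}(p)\setminus B_{r_\eps}(p)\subset Q_p^\eps\subset\Omega^\eps$. Since the $A_p^\eps$ are pairwise disjoint it will suffice to prove, uniformly in $p$ and in $\eps<\eps_0$,
\begin{equation}
  \begin{gathered}
    \norm{\nabla h^\eps f}_{\RL^2(B_{r_\eps}(p))}^2\le C\norm{\nabla f}_{\RL^2(A_p^\eps)}^2,\\
    \norm{h^\eps f}_{\RL^2(B_{r_\eps}(p))}^2\le C\eps^{d/(d-1)}\norm{f}_{\RH^1(A_p^\eps)}^2 .
  \end{gathered}
\end{equation}
Summing the first over $p$ bounds $\norm{h^\eps}_{\RH^1(\Omega^\eps)\to\RH^1(\BT^\eps)}$, and summing the second bounds $\norm{h^\eps}_{\RH^1(\Omega^\eps)\to\RL^2(\BT^\eps)}$ by $C\eps^{d/(2(d-1))}\to0$, giving both assertions. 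All Euclidean estimates used below are transported to $M$ through the coordinates of \eqref{eq:metricpolar}: rescaling $B_{\eps/2}(p)$ to unit size via $\exp_p$, by \eqref{eq:perturbpolar} the pulled-back metric is a $\RC^1$ $O_M(\eps)$-perturbation of the flat one, uniformly in $p$, so the $\RL^2$ and gradient norms, the trace and Poincaré inequalities, and the energy-minimising property of harmonic extension survive at the price of a factor $1+O_M(\eps)$. Crucially $r_\eps\asymp_{M,\beta}\eps^{d/(d-1)}$, so after rescaling $A_p^\eps$ becomes the Euclidean annulus $\set{\rho_\eps<\abs{y}<1}$ with $\rho_\eps\asymp\eps^{1/(d-1)}\to0$ and $B_{r_\eps}(p)$ its central hole.

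For the gradient bound I would use that $h^\eps f$ minimises the Dirichlet energy among $\RH^1(B_{r_\eps}(p))$ extensions of $f|_{\del B_{r_\eps}(p)}$, so it is enough to exhibit one competitor: let $S_p=B_{2r_\eps}(p)\setminus B_{r_\eps}(p)\subset A_p^\eps$ be the thin shell just outside the hole and take the Euclidean harmonic extension of $f|_{\del B_{r_\eps}(p)}$, whose energy equals the squared $\RH^{1/2}$-seminorm of that trace; the trace inequality $\RH^1(S_p)\to\RH^{1/2}(\del B_{r_\eps}(p))$ (rescaled from the unit shell) followed by the Poincaré inequality on $S_p$, both with the correct $r_\eps$-scaling, bound this by $C\norm{\nabla f}_{\RL^2(S_p)}^2\le C\norm{\nabla f}_{\RL^2(A_p^\eps)}^2$, which is the first inequality. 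For the $\RL^2$ bound I would split $h^\eps f=\bar f_p+h^\eps(f-\bar f_p)$ with $\bar f_p$ the mean of $f$ over the whole cell $A_p^\eps$. The constant part contributes $\bar f_p^2\Vol_g(B_{r_\eps}(p))\le C(r_\eps/\eps)^d\norm{f}_{\RL^2(A_p^\eps)}^2\asymp\eps^{d/(d-1)}\norm{f}_{\RL^2(A_p^\eps)}^2$, using $\bar f_p^2\le\Vol_g(A_p^\eps)^{-1}\norm{f}_{\RL^2(A_p^\eps)}^2\asymp\eps^{-d}\norm{f}_{\RL^2(A_p^\eps)}^2$ and $\Vol_g(B_{r_\eps}(p))\asymp r_\eps^d$. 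For the harmonic remainder $u:=h^\eps(f-\bar f_p)$, the mean-value estimate $\norm{u}_{\RL^2(B_\rho)}^2\le C\rho\norm{u}_{\RL^2(\del B_\rho)}^2$ for harmonic functions, together with the trace inequality on $S_p$ and the uniform Poincaré--Wirtinger inequality on the cells $A_p^\eps$ proved in this section, give $\norm{u}_{\RL^2(B_{r_\eps}(p))}^2\le Cr_\eps\norm{f-\bar f_p}_{\RL^2(\del B_{r_\eps}(p))}^2\le C(\eps^2+r_\eps^2)\norm{\nabla f}_{\RL^2(A_p^\eps)}^2$. Adding the two contributions and using $\eps^2\le\eps^{d/(d-1)}$ (valid for $d\ge2$ and $\eps<1$) yields the second inequality.

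The only delicate point, and the reason the usual one-domain arguments do not apply verbatim, is uniformity in $\eps$: the shells $S_p$ and the holes $B_{r_\eps}(p)$ degenerate as $\eps\to0$, so every constant coming from a trace, Poincaré or harmonic-function estimate must be checked to scale correctly and remain bounded. The single genuinely nonstandard ingredient is the uniform Poincaré--Wirtinger inequality on the perforated cells $A_p^\eps$, whose inner sphere shrinks at rate $\eps^{1/(d-1)}$ relative to their outer radius $\eps$; the usual sufficient conditions (cone or tubular-neighbourhood conditions on the inner boundary) fail here, so one rescales to unit size — where the cells converge to a fixed ball — and argues by a compactness contradiction. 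Everything else is careful bookkeeping of scales in otherwise standard estimates, carried over to $M$ through \eqref{eq:metricpolar}--\eqref{eq:perturbpolar}.
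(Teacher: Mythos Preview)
Your argument is essentially correct, and the overall architecture (decouple over holes, reduce to one ball via the disjoint shells, then sum) is exactly what the paper does as well. The gradient bound via the thin shell $S_p=B_{2r_\eps}(p)\setminus B_{r_\eps}(p)$ matches the paper's approach: both use a shell whose inner and outer radii are comparable, so that after rescaling to unit size the relevant trace and extension constants are independent of $\eps$.

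The difference lies in the $\RL^2$ bound. The paper stays on the thin shell $B_{2\delta}\setminus B_\delta$ with $\delta=r_\eps$ for \emph{both} bounds and simply invokes Rauch--Taylor \cite[Example~1, p.~40]{RauchTaylor}, where the uniform $\RH^1$ boundedness and the $\RL^2$ decay of the harmonic extension operator $\RH^1(B_{2\delta}\setminus B_\delta)\to\RH^1(B_\delta)$ are proved in the Euclidean setting; transfer to $M$ is then immediate via the $\RC^1$-smallness of the metric in geodesic coordinates. Your route is more self-contained but longer: you split $h^\eps f$ into a constant plus a mean-zero remainder, and the remainder needs a Poincar\'e--Wirtinger inequality on the full annulus $A_p^\eps$, whose inner radius is $\asymp\eps^{1/(d-1)}$ relative to its outer radius after rescaling. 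That inequality is true, and your compactness sketch for it is the right idea, but be aware that the Poincar\'e inequality actually proved in this section (Lemma~\ref{lem:cheeg}) is for the perforated Vorono\u{\i} cells $Q_p^\eps$, not for $A_p^\eps$, and its proof \emph{uses} the present lemma; so you cannot cite it here without circularity. Your compactness alternative avoids this, but it is an additional argument you must supply. The paper's thin-shell approach sidesteps this issue entirely, since on $B_{2\delta}\setminus B_\delta$ all constants are obtained by a single rescaling to a fixed annulus.
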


\begin{proof}
  It is clearly sufficient to show that for $\delta$ small enough and $p \in \Omega$, the harmonic extension
  operator $\RH^1(B_{2\delta}(p) \setminus B_{\delta}(p)) \to
  \RH^1(B_{\delta}(p))$ is bounded uniformly in $\delta$, and that the $\RL^2$ norm of the
  harmonic extension in $B_\delta(p)$ goes to $0$. This follows directly from
  \cite[Example 1, p. 40]{RauchTaylor}, where this is shown in the Euclidean setting and the observation that for small enough
  $\delta$, c.f. equation \eqref{eq:areageoball}, geodesic balls and spherical
  shells are
  mapped to Euclidean balls and spherical shells by $\RC^1$-small
  perturbations of an isometry, and that all quantities involved are uniformly
  continuous in such perturbations.
\end{proof}
Finally, we will require that the Poincar\'e--Wirtinger
inequality of the perforated Vorono\u{\i} cells $Q_p^\eps$ hold uniformly in both $p \in M$
and $\eps > 0$. To this end, for any domain $\Omega \subset M$, denote by $\mu_1(\Omega)$ the first
non-trivial Neumann eigenvalue of $\Omega$, and for any $f : U \subset M \to \R$, 
\begin{equation}
  m_f := \int_U f \de \mu_g.
\end{equation}

 \begin{lemma}
  \label{lem:cheeg}
  There is $c,\eps_0 > 0$ depending only on $M$ and $\beta$ such that for $0 < \eps < \eps_0$, $p \in
  \BS^\eps$, and all $f \in \RH^1(Q_p^\eps)$
  \begin{equation}
    \int_{Q_p^\eps} \abs{f - m_f}^2 \de \mu_g \le c\eps^{2} \int_{Q_p^\eps}
    \abs{\nabla f}^2 \de \mu_g.
  \end{equation}
\end{lemma}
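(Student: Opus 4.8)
The plan is to reduce the assertion to a uniform lower bound on the first nonzero Neumann eigenvalue, $\mu_1(Q_p^\eps)\ge c^{-1}\eps^{-2}$ with $c$ depending only on $M$ and $\beta$: since $m_f$ is the average of $f$ over $Q_p^\eps$, the inequality of the lemma is exactly the Poincar\'e--Wirtinger inequality $\int_{Q_p^\eps}|f-m_f|^2\de\mu_g\le\mu_1(Q_p^\eps)^{-1}\int_{Q_p^\eps}|\nabla f|^2\de\mu_g$. I would establish this bound by comparison with the full Vorono\u{\i} cell $V_p^\eps$, which is better behaved because it has a centre, and then pay only a negligible price to fill in the small hole $B_{r_\eps}(p)$.

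\emph{Step 1: the solid cell.} First I would prove the uniform bound $\mu_1(V_p^\eps)\ge c^{-1}\eps^{-2}$. The key structural fact is that $V_p^\eps$ is geodesically star-shaped with respect to $p$: if $x\in V_p^\eps$ and $y$ lies on a minimising geodesic from $p$ to $x$, then $\dist(y,q)\ge\dist(x,q)-\dist(x,y)\ge\dist(x,p)-\dist(x,y)=\dist(y,p)$ for every $q\in\BS^\eps$, so $y\in V_p^\eps$. Hence, in geodesic polar coordinates at $p$ (available for $\eps<\eps_0$ since $V_p^\eps\subset B_{3\eps}(p)$), the pullback $\exp_p^{-1}(V_p^\eps)\subset\R^d$ is star-shaped with respect to the origin, contains $B_{\eps/2}(0)$ and is contained in $B_{3\eps}(0)$; moreover each defining half-space $\{\dist(\cdot,p)\le\dist(\cdot,q)\}$ pulls back to a $\RC^1$-small perturbation of a Euclidean half-space at this scale, by \eqref{eq:metricpolar}--\eqref{eq:perturbpolar}, so after the dilation $x\mapsto\eps^{-1}x$ the cells form a family of domains of diameter $\le6$ and inradius $\ge 1/2$ that are uniformly $\RC^1$-close to convex polytopes. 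The classical Poincar\'e--Wirtinger inequality for convex, or mildly perturbed convex, domains, together with this dilation and the near-isometry, then gives $\int_{V_p^\eps}|v-\bar v|^2\de\mu_g\le c\eps^2\int_{V_p^\eps}|\nabla v|^2\de\mu_g$ for all $v\in\RH^1(V_p^\eps)$, where $\bar v$ denotes the mean of $v$ over $V_p^\eps$.

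\emph{Step 2: filling the hole.} Given $f\in\RH^1(Q_p^\eps)$, I would extend it across the hole by interpolating to a constant in a thin collar. Fix a cutoff $\chi$ on $V_p^\eps$ with $\chi\equiv0$ on $B_{r_\eps}(p)$, $\chi\equiv1$ on $V_p^\eps\setminus B_{2r_\eps}(p)$ and $|\nabla\chi|\ll r_\eps^{-1}$; this is legitimate because $r_\eps\asymp\eps^{d/(d-1)}\ll\eps$, so the collar $B_{2r_\eps}(p)\setminus B_{r_\eps}(p)$ sits well inside $A_p^\eps\subset V_p^\eps$ for $\eps<\eps_0$. Let $a$ be the mean of $f$ over this collar and set $\tilde f:=\chi f+(1-\chi)a\in\RH^1(V_p^\eps)$. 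Then $\nabla\tilde f=\chi\nabla f+(f-a)\nabla\chi$, and the Poincar\'e--Wirtinger inequality on the collar --- which has Poincar\'e constant $\asymp r_\eps^2$ by the obvious scaling --- gives $\int_{B_{2r_\eps}(p)\setminus B_{r_\eps}(p)}|f-a|^2\de\mu_g\ll r_\eps^2\int_{Q_p^\eps}|\nabla f|^2\de\mu_g$, so that $\int_{V_p^\eps}|\nabla\tilde f|^2\ll\int_{Q_p^\eps}|\nabla f|^2$. Applying Step 1 to $\tilde f$ and using minimality of the mean, $\int_{Q_p^\eps}|f-m_f|^2\le\int_{Q_p^\eps}|f-\bar{\tilde f}|^2$, which I would split over $Q_p^\eps\setminus B_{2r_\eps}(p)$, where $\tilde f=f$, and over the collar, where $|f-\tilde f|\le|f-a|$ and $|\tilde f-\bar{\tilde f}|$ is already controlled by Step 1; since $r_\eps\ll\eps$ the collar contribution is absorbed and the lemma follows.

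The main difficulty is Step 1: one is forced to exploit a genuine structural property of the Vorono\u{\i} cells --- here that they are geodesically star-shaped about their centre and, in the Euclidean model, $\RC^1$-perturbations of convex sets with controlled inradius and diameter --- because these cells are not smooth and do not satisfy the usual uniform cone or tubular-neighbourhood conditions, so that no soft compactness argument applies and a genuinely scale-invariant estimate must be produced by hand. Step 2 is the familiar fact that an obstacle of small capacity cannot destroy a Poincar\'e inequality, and is routine once the thin-collar estimate is in place.
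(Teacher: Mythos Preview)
Your proof is correct and follows the same two-step strategy as the paper: first establish the uniform Poincar\'e--Wirtinger inequality on the solid cell $V_p^\eps$, then transfer it to $Q_p^\eps$ by extending functions across the small hole.  The implementations differ in both steps.  For Step~1, the paper asserts that $V_p^\eps$ is geodesically \emph{convex} and quotes a Payne--Weinberger--type bound from the literature, whereas you only claim star-shapedness (your triangle-inequality argument is clean and correct) and argue by pulling back through $\exp_p$ and rescaling to a near-Euclidean convex set; your route is more self-contained and arguably more robust, though the final appeal to a Poincar\'e inequality for ``mildly perturbed convex domains'' should be sharpened, e.g.\ by exhibiting a uniformly bi-Lipschitz map to a genuine convex polytope with comparable inradius and diameter.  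For Step~2, the paper extends the first Neumann eigenfunction of $Q_p^\eps$ \emph{harmonically} into the hole (using the earlier harmonic-extension lemma) and then tests it against $\mu_1(V_p^\eps)$, while you extend an arbitrary $f$ by cutting off to its collar average; your approach is more elementary and avoids invoking the harmonic extension estimate, at the small cost of the extra collar bookkeeping.
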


\begin{proof}
  It follows from the variational characterisation for Neumann eigenvalues that 
  \begin{equation}
    \int_{Q_p^\eps} \abs{f - m_f}^2 \de \mu_g \le \frac{1}{\mu_1(Q_p^\eps)} \int_{Q_p^\eps}
    \abs{\nabla f}^2 \de \mu_g
  \end{equation}
  so that it is equivalent to show that for all $0 < \eps < \eps_0$,
  \begin{equation}
    \mu_1(Q_p^\eps) \ge c^{-1} \eps^{-2}
  \end{equation}
  for some $ c > 0$.

  Since the Vorono\u{\i} cells $V_p^\eps$ are geodesically convex and have
  diameter $\diam(V_p^\eps) = \bigo{\eps}$, uniformly in $p \in \BS^\eps$, it
  follows from \cite[Theorem 1.2]{hkp} that there is a constant $C$ depending
  only on the curvature and dimension of $M$ such that
  \begin{equation}
    \label{eq:lowerboundmu1}
    \mu_1(V_p^\eps) \ge C \eps^{-2}.
  \end{equation}
  Let $w$ be the first non-constant Neumann eigenfunction of $Q_p^\eps$,
  normalised to $\norm{w}_{\RL^2(Q_p^\eps)} = 1$, and let
  $\hat w$ be the function defined on $V_p^\eps$ as the harmonic extension
  to $
B_{r_\eps}(p)
  $, i.e. as
  \begin{equation}
    \hat w(x) = \begin{cases}
      w(x) &\text{if } x \in Q_p^\eps \\
      h^\eps w(x) & \text{if } x \in
B_{r_\eps}(p)
      ,
    \end{cases}
  \end{equation}
  where $h^\eps$ is defined in Lemma \ref{lem:harmextbounded}. It follows from the Cauchy--Schwarz inequality and Lemma \ref{lem:harmextbounded} that
\begin{equation}
  m_w := \int_{V_p^\eps} \hat w(x) \de \mu_g = \int_{ 
  B_{r_\eps}(p)}h^\eps w(x) \de
  A_g =  \smallo{\eps^{\frac{d^2}{2(d-1)}}}.
\end{equation}
Using $\hat w - m_w$ as a test function for the first Neumann eigenvalue in
$V_p^\eps$ we have from Lemma \ref{lem:harmextbounded} that there is a constant
$c$ such that
\begin{equation}
  \begin{aligned}
    \mu_1(Q_p^\eps) &= \int_{Q_p^\eps} \abs{\nabla (\hat w - m_w)}^2 \de \mu_g \\
    &\ge c  \int_{V_p^\eps} \abs{\nabla (\hat w - m_w)}^2 \de \mu_g \\
    &\ge c \mu_1(V_p^\eps)\norm{\hat w - m_w}^2_{\RL^2(V_p^\eps)} \\
    &\ge c \eps^{-2} (1 + \smallo{1}),
  \end{aligned}
\end{equation}
concluding the proof.
\end{proof}

\section{Analytic properties of Steklov eigenpairs}
\label{sec:stekprop}
In this section, we obtain analytic properties of the Steklov eigenvalues
$\sigma_k^{(\eps)}:= \sigma_k(\Omega^\eps)$, and of Steklov eigenfunctions
$\ukn$. We
start by obtaining bounds on $\sigma_k^{(\eps)}$ which are uniform in $\eps$.

\begin{lemma}
  \label{lem:boundstek}
  For all $k\in \N$, $\beta \in C^\infty(M)$, we have as $\eps \to 0$
  \begin{equation}
    \sigma_k^{(\eps)} := \sigma_k(\Omega^\eps) \le \lambda_k(M,g,\beta) +
     o_{M,k,\beta}(1).
  \end{equation}
\end{lemma}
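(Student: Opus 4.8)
The plan is to use the variational (min-max) characterisation of $\sigma_k(\Omega^\eps)$, testing it with functions built from the first $k+1$ Laplace eigenfunctions $\phi_0,\dotsc,\phi_k$ of Problem \eqref{prob:laplace}. Recall that
\begin{equation}
  \sigma_k(\Omega^\eps) = \min_{\substack{E \subset \RH^1(\Omega^\eps)\\ \dim E = k+1}} \ \max_{\substack{u \in E\\ u \ne 0}} \ \frac{\int_{\Omega^\eps} \abs{\nabla u}^2 \de \mu_g}{\int_{\del \Omega^\eps} u^2 \de A_g}.
\end{equation}
First I would take $E^\eps := \spn\{\phi_0|_{\Omega^\eps}, \dotsc, \phi_k|_{\Omega^\eps}\}$; since the $\phi_j$ are smooth and linearly independent on $M$, for $\eps$ small enough these restrictions remain linearly independent on $\Omega^\eps$, so $\dim E^\eps = k+1$. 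For an arbitrary $u = \sum_j a_j \phi_j$ in $E^\eps$, the Dirichlet energy satisfies $\int_{\Omega^\eps} \abs{\nabla u}^2 \de \mu_g \le \int_M \abs{\nabla u}^2 \de \mu_g = \sum_{i,j} a_i a_j \lambda_j \int_M \phi_i \phi_j \beta \de \mu_g = \sum_j a_j^2 \lambda_j$, where we used that the $\phi_j$ are $\RL^2(M,\beta\de\mu_g)$-orthonormal and eigenfunctions. In particular this numerator is at most $\lambda_k(M,g,\beta) \sum_j a_j^2$.

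The heart of the matter is the denominator: I need $\int_{\del \Omega^\eps} u^2 \de A_g$ to be close to $\int_M u^2 \beta \de \mu_g = \sum_j a_j^2$, uniformly over the finite-dimensional space $E^\eps$. Since $u^2$ is a fixed continuous (indeed smooth) function on $M$ for each fixed choice of coefficients, the weak-$*$ convergence \eqref{eq:weakstar} gives $\int_{\del\Omega^\eps} u^2 \de A_g \to \int_M u^2 \beta \de\mu_g$. To make this uniform over the coefficient vector $(a_j)$, I would restrict to the unit sphere $\sum_j a_j^2 = 1$, which is compact; the map $(a_j) \mapsto u^2 = (\sum_j a_j \phi_j)^2$ is continuous from this sphere into $\RC^0(M)$ (even $\RC^1(M)$), and the functionals $\mu \mapsto \int f \de\mu$ depend continuously on $f \in \RC^0(M)$ uniformly as $\mu$ ranges over the bounded family $\{\de A_{\del\Omega^\eps}\} \cup \{\beta\de\mu_g\}$ (boundedness of total mass comes from $\CH^{d-1}(\del\Omega^\eps) \to \int_M \beta\de\mu_g$). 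Hence a standard compactness/equicontinuity argument upgrades the pointwise weak-$*$ convergence to $\sup_{\sum a_j^2 = 1} \abs{\int_{\del\Omega^\eps} u^2 \de A_g - 1} = o_{M,k,\beta}(1)$.

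Putting the two bounds together, for every nonzero $u \in E^\eps$ with $\sum_j a_j^2 = 1$ we get the Rayleigh quotient bounded by $\lambda_k(M,g,\beta)/(1 - o_{M,k,\beta}(1)) = \lambda_k(M,g,\beta) + o_{M,k,\beta}(1)$, and taking the max over $u \in E^\eps$ and then the min over all $(k+1)$-dimensional subspaces yields $\sigma_k(\Omega^\eps) \le \lambda_k(M,g,\beta) + o_{M,k,\beta}(1)$. The main obstacle — modest, but the only real point — is the uniformity of the convergence of the boundary integrals over the $(k+1)$-dimensional family of test functions; this is handled by the compactness of the unit sphere in $E^\eps$ together with the uniform boundedness of the masses of $\de A_{\del\Omega^\eps}$, both of which are already available from the construction in Section \ref{sec:homodef}. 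I should also note that one must check the denominators are eventually bounded away from zero so that the Rayleigh quotients are well defined for small $\eps$, which is immediate from the same uniform convergence.
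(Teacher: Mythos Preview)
Your proposal is correct and follows essentially the same route as the paper: test the min--max characterisation of $\sigma_k(\Omega^\eps)$ with the span of the first $k+1$ weighted Laplace eigenfunctions on $M$, bound the Dirichlet energy from above by its value on all of $M$, and use the weak-$*$ convergence $\de A_{\del\Omega^\eps}\to\beta\de\mu_g$ to control the boundary integrals. If anything, you are more explicit than the paper about why the convergence of the denominators is uniform over the finite-dimensional test space (your compactness-of-the-unit-sphere argument, together with the uniform boundedness of $\CH^{d-1}(\del\Omega^\eps)$, is the right way to justify this step, which the paper leaves implicit).
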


\begin{proof}
  It is clearly sufficient to prove this statement for $\eps < \eps_0$ small enough. It
  follows from the variational characterisation of Steklov eigenvalues that
  \begin{equation}
    \sigma_k^{(\eps)} = \min_{\substack{E \subset \RL^2(\del \Omega^\eps) \\
    \dim(E) = k+1}} \max_{u \in E} \frac{\int_{\Omega^\eps} \abs{\nabla u}^2 \de
    \mu_g}{\int_{\del \Omega^\eps} u^2 \de x}.
  \end{equation}
  Let $f_0,\dotsc,f_k$ be the first $k+1$ normalised eigenfunctions of the
  weighted Laplacian on
  $M$. They are pairwise $\RL^2(M,\beta \de \mu_g)$ orthogonal, and since the
  $(d-1)$-dimensional Hausdorff measure restricted to $\del \Omega^\eps$
  converges weak-$*$ to $\beta\de \mu_g$, for $\eps$ small enough
  they span a $k+1$ dimensional subspace of $\RL^2(\del\Omega^\eps)$, and for
  $0 \le j \le k$,
  \begin{equation}
    \begin{aligned}
      \norm{f_j}_{\RL^2(\del \Omega^\eps)}^2 &= \int_{M}f_j^2(x)\beta(x)\de
      \mu_g + o_{M,k,\beta}(1).
    \end{aligned}
  \end{equation}
  Therefore, using $E = \operatorname{span}(f_0,\dotsc,f_k)$ as a test subspace for
  $\sigma_k^\eps$ yields
  \begin{equation}
    \begin{aligned}
      \sigma_k^{(\eps)} &\le \max_{f \in E} \frac{\int_{\Omega^\eps} \abs{\nabla
      f}^2 \de \mu_g}{\int_{\del \Omega^\eps} f^2 \de \mu_g} \\
      &\le \lambda_k(M,g,\beta) + o_{M,k,\beta}(1),      
  \end{aligned}
  \end{equation}
  which is what we set out to prove.
\end{proof}

We turn to the boundedness of the sequence $\set{\ukn}$ in
$\RL^\infty(\Omega^\eps)$.

\begin{lemma}
  \label{lem:linftybound}
  There is a $\eps_0,C > 0$ depending only on $k, \beta, M$ such that for all $0
  < \eps < \eps_0$,
  \begin{equation}
    \norm{\ukn}_{\RL^\infty(\Omega^\eps)} \le C
  \end{equation}
\end{lemma}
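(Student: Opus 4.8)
The plan is to obtain the $\RL^\infty$ bound by a Moser-type iteration argument, combined with the uniform control on Steklov eigenvalues from Lemma \ref{lem:boundstek} and the uniform trace bound from Lemma \ref{lem:traceH1L2}. Since $\ukn$ is harmonic in $\Omega^\eps$, powers of $|\ukn|$ are subharmonic, so one can run the classical iteration on $M$ after first passing through the harmonic extension $\Ukn$ so as to work on a fixed closed manifold rather than on the moving domain $\Omega^\eps$. The subtlety is that the eigenvalue equation $\del_\nu \ukn = \sigma_k^{(\eps)} \ukn$ lives on $\del \Omega^\eps$, so the "forcing term" in the iteration is a boundary integral against $\de A_{\del \Omega^\eps}$, and one must control it uniformly as $\eps \to 0$.

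First I would record the normalisation: we assume $\norm{\ukn}_{\RL^2(\del\Omega^\eps)} = 1$. Combining this with Lemma \ref{lem:boundstek} gives $\int_{\Omega^\eps} |\nabla \ukn|^2 \de\mu_g = \sigma_k^{(\eps)} \le \lambda_k(M,g,\beta) + o(1)$, bounded uniformly in $\eps$; and then the uniform trace estimate of Lemma \ref{lem:traceH1L2} run backwards, together with the uniform harmonic extension bound of Lemma \ref{lem:harmextbounded}, shows $\Ukn$ is bounded in $\RH^1(M)$. Next, for $q \ge 1$ I would test the weak formulation of harmonicity of $\ukn$ in $\Omega^\eps$ against $\ukn |\ukn|^{2q-2}$ (suitably truncated to stay in $\RH^1$), producing on the left a Dirichlet-type energy of $|\ukn|^q$ controlled from below by $c \int_{\Omega^\eps} |\nabla |\ukn|^q|^2$, and on the right the boundary term $\sigma_k^{(\eps)} \int_{\del\Omega^\eps} |\ukn|^{2q} \de A_{\del\Omega^\eps}$. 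Extending $|\ukn|^q$ harmonically to $M$ and using the uniform trace bound again, the boundary term is dominated by $C \norm{|\Ukn|^q}_{\RH^1(M)}^2$; one then applies the Sobolev inequality on the fixed manifold $M$ to bound $\norm{|\Ukn|^q}_{\RL^{2d/(d-2)}(M)}$ (or an $\RL^p$ norm for any $p$ when $d=2$) by $C\, q\, \norm{|\Ukn|^q}_{\RL^2}$, with a constant independent of $\eps$. This is the standard Moser setup, with the crucial point that every constant appearing is $\eps$-independent because it comes from fixed-manifold Sobolev embedding together with the uniform operator bounds of Section \ref{sec:ineq}.

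Iterating over $q = q_0 \theta^j$ with $\theta = d/(d-2)$ (or any $\theta > 1$ when $d = 2$) then yields $\norm{\ukn}_{\RL^\infty(\Omega^\eps)} \le C \norm{\ukn}_{\RL^2(\Omega^\eps \cup \BT^\eps)} = C\norm{\Ukn}_{\RL^2(M)}$, and the latter is bounded by the uniform $\RH^1(M)$ bound established above (together with $\norm{h^\eps}_{\RH^1 \to \RL^2(\BT^\eps)} \to 0$ from Lemma \ref{lem:harmextbounded}, which guarantees the extension does not blow up the $\RL^2$ norm). The resulting constant depends only on $k$ (through $\lambda_k(M,g,\beta)$), on $M$ and $g$ (through the Sobolev constant and the uniform trace constant), and on $\beta$, as claimed. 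I expect the main obstacle to be the bookkeeping that keeps the iteration uniform in $\eps$: one must verify that passing from $\Omega^\eps$ to $M$ via the harmonic extension does not introduce $\eps$-dependent constants at any stage — in particular that the trace inequality $\norm{|\ukn|^q}_{\RL^2(\del\Omega^\eps)}^2 \le C \norm{|\Ukn|^q}_{\RH^1(M)}^2$ holds with $C$ independent of $\eps$, which is exactly Lemma \ref{lem:traceH1L2} applied to $|\ukn|^q \in \RH^1(\Omega^\eps)$ — and that truncation arguments needed to justify testing with unbounded powers can be removed in the limit.
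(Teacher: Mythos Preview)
Your overall strategy---Moser iteration fed by the uniform eigenvalue bound (Lemma~\ref{lem:boundstek}) and uniform trace and extension bounds---is the right one, and is in fact what the paper does: it invokes \cite[Theorem~3.1]{bgt} as a black box, observing that the constant there depends polynomially only on $\sigma$, on $\Vol_g(\Omega)$, and on the norm of the trace $\tau:\RBV(\Omega)\to\RL^1(\del\Omega)$, all of which are controlled uniformly in $\eps$ by Section~\ref{sec:ineq}.

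There is, however, a genuine gap in the iteration as you set it up. Testing against $\ukn|\ukn|^{2q-2}$ gives, with $w=|\ukn|^q$,
\[
\tfrac{c}{q}\,\|\nabla w\|_{\RL^2(\Omega^\eps)}^2 \le \sigma_k^{(\eps)}\,\|w\|_{\RL^2(\del\Omega^\eps)}^2,
\]
and you then propose to bound the right-hand side by $C\|w\|_{\RH^1}^2=C(\|w\|_{\RL^2}^2+\|\nabla w\|_{\RL^2}^2)$ via Lemma~\ref{lem:traceH1L2}. But once $q$ is large, the gradient term on the right absorbs the left-hand side and the inequality is vacuous: you cannot deduce $\|w\|_{\RH^1}\le Cq\|w\|_{\RL^2}$ from this, so the iteration does not close. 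The $\RH^1\to\RL^2$ trace is simply too crude here.

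The fix---and the reason the paper establishes the $\RBV\to\RL^1$ trace uniformly in Lemma~\ref{lem:traceBVL1}---is to apply that trace to $w^2$ instead:
\[
\|w\|_{\RL^2(\del\Omega^\eps)}^2=\|w^2\|_{\RL^1(\del\Omega^\eps)}\le C\|w^2\|_{\RBV(\Omega^\eps)}\le C\bigl(\|w\|_{\RL^2}^2+2\|w\|_{\RL^2}\|\nabla w\|_{\RL^2}\bigr).
\]
Substituting $\|\nabla w\|_{\RL^2}\le (Cq\sigma_k^{(\eps)})^{1/2}\|w\|_{\RL^2(\del\Omega^\eps)}$ from the display above and applying Young's inequality to the resulting product lets you absorb $\tfrac12\|w\|_{\RL^2(\del\Omega^\eps)}^2$ into the left, yielding $\|w\|_{\RL^2(\del\Omega^\eps)}^2\le C(1+q\sigma_k^{(\eps)})\|w\|_{\RL^2(\Omega^\eps)}^2$ and hence $\|w\|_{\RH^1}\le C(1+q\sigma_k^{(\eps)})\|w\|_{\RL^2}$. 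From there Sobolev on $M$ (via the uniform extension of Lemma~\ref{lem:harmextbounded}) and the iteration proceed exactly as you describe. This is precisely the argument carried out in~\cite{bgt}; the paper's proof consists of checking that its hypotheses hold uniformly in $\eps$.
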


\begin{proof}
  It is shown in \cite[Theorem 3.1]{bgt} that for any Steklov eigenfunction $u$
  with eigenvalue $\sigma$ on a domain $\Omega$, 
  \begin{equation}
    \norm{u}_{\RL^\infty(\Omega)} \le C \norm{u}_{\RL^2(\del \Omega)},
  \end{equation}
  with $C$ depending polynomially only on $\sigma$, $\Vol_g(\Omega)$ and the
  norm of the trace operator $\tau : \RBV(\Omega) \to \RL^1(\del \Omega)$. Note
  that they only prove this statement for domains in $\R^d$, however a close
  inspection of their proof reveals that geometric dependence appears in only
  two places. The first one is on the norm of the extension operator from
  $\RBV(\Omega) \to \RBV(M)$, which depends only on the norm of $\tau$ (see
  \cite{evansgariepy}[Theorem 5.4.1]), and therefore is already accounted for.
  The second one is on the norm of the Sobolev embedding $\RBV(M) \to
  \RL^{\frac{d}{d-1}}(M)$, whose norm depends only on the
  Gagliardo--Nirenberg--Sobolev inequality, which chnages by at most a constant
  for $M$ compact.

  Lemma \ref{lem:traceBVL1} gives a uniform bound for $\norm{\tau^\eps}$, Lemma
  \ref{lem:boundstek} gives us a uniform bound for $\sigma_k^{(\eps)}$ while
  $\Vol_g(\Omega)$ is obviously bounded by $\Vol_g(M)$ and $\ukn$ is normalised
  to $\|\ukn\|_{\RL^2(\del \Omega^\eps)} = 1$. Thus
  $\|\ukn\|_{\RL^\infty}$ is bounded, uniformly in $\eps$.
\end{proof}

\section{The homogenisation limit} \label{sec:homo}

In this section, we prove Theorem \ref{thm:homo}. While the general scheme of
the proof follows the general idea in \cite{GHL}, we cannot use any periodic
structure in order to define the auxiliary functions required to prove
convergence. The major difference with general homogenisation methods 
will be the definition of those auxiliary functions on a cell by cell basis in
such a way as to obtain the desired convergence.

Our first step is to show that there are converging subsequences. This is done
in the following lemma. Recall that $\ukn$ are the Steklov eigenfunctions on
$\Omega^\eps$ and $\Ukn$ their extension to $M$, harmonic in $\BT^\eps$.

\begin{lemma}
  \label{lem:boundh1}
  There is a subsequence of $\set{\Ukn}$, which we still label by $\eps$,
  converging weakly in $\RH^1(M)$.
\end{lemma}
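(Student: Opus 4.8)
The plan is to show that the family $\{\Ukn\}_\eps$ is bounded in $\RH^1(M)$; weak compactness of the unit ball in a Hilbert space then yields a weakly convergent subsequence. By definition $\|\ukn\|_{\RL^2(\del\Omega^\eps)} = 1$, and since $\ukn$ is a Steklov eigenfunction with eigenvalue $\sigma_k^{(\eps)}$,
\begin{equation}
  \int_{\Omega^\eps} |\nabla \ukn|^2 \de\mu_g = \sigma_k^{(\eps)} \int_{\del\Omega^\eps} (\ukn)^2 \de A_g = \sigma_k^{(\eps)}.
\end{equation}
By Lemma~\ref{lem:boundstek}, $\sigma_k^{(\eps)} \le \lambda_k(M,g,\beta) + o_{M,k,\beta}(1)$, so the Dirichlet energy of $\ukn$ on $\Omega^\eps$ is bounded uniformly in $\eps$. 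The harmonic extension $\Ukn$ minimises the Dirichlet energy among $\RH^1(\BT^\eps)$ functions with the prescribed boundary data, so by Lemma~\ref{lem:harmextbounded} the harmonic extension operator $h^\eps$ is bounded uniformly in $\eps$, hence
\begin{equation}
  \int_{\BT^\eps} |\nabla \Ukn|^2 \de\mu_g = \int_{\BT^\eps} |\nabla h^\eps \ukn|^2 \de\mu_g \le C \|\ukn\|_{\RH^1(\Omega^\eps)}^2 = C\left(\|\ukn\|_{\RL^2(\Omega^\eps)}^2 + \sigma_k^{(\eps)}\right).
\end{equation}
Thus $\|\nabla \Ukn\|_{\RL^2(M)}^2$ is controlled once we bound $\|\ukn\|_{\RL^2(\Omega^\eps)}$.

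The remaining task is therefore the $\RL^2(M)$ bound on $\Ukn$, or equivalently on $\ukn$ over $\Omega^\eps$ together with the contribution from the holes. For the holes, the second part of Lemma~\ref{lem:harmextbounded} gives $\|h^\eps\|_{\RH^1(\Omega^\eps)\to\RL^2(\BT^\eps)} \to 0$, so $\|\Ukn\|_{\RL^2(\BT^\eps)}$ is negligible compared to $\|\ukn\|_{\RH^1(\Omega^\eps)}$; it remains to bound $\|\ukn\|_{\RL^2(\Omega^\eps)}$. For this I would decompose $\Omega^\eps = \bigcup_{p\in\BS^\eps} Q_p^\eps$ and apply the uniform Poincaré--Wirtinger inequality of Lemma~\ref{lem:cheeg} on each perforated Voronoï cell: writing $m_p = \Vol_g(Q_p^\eps)^{-1}\int_{Q_p^\eps} \ukn \de\mu_g$ for the mean on $Q_p^\eps$,
\begin{equation}
  \int_{Q_p^\eps} (\ukn)^2 \de\mu_g \le 2\int_{Q_p^\eps} |\ukn - m_p|^2 \de\mu_g + 2\,\Vol_g(Q_p^\eps)\, m_p^2 \le 2c\eps^2 \int_{Q_p^\eps} |\nabla \ukn|^2 \de\mu_g + 2\,\Vol_g(Q_p^\eps)\, m_p^2.
\end{equation}
Summing over $p$, the first term is bounded by $2c\eps^2 \sigma_k^{(\eps)} \to 0$. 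For the second term, the uniform $\RL^\infty$ bound of Lemma~\ref{lem:linftybound}, $\|\ukn\|_{\RL^\infty(\Omega^\eps)} \le C$, gives $|m_p| \le C$, and summing $\Vol_g(Q_p^\eps) \le \Vol_g(M)$; hence $\sum_p \Vol_g(Q_p^\eps) m_p^2 \le C^2 \Vol_g(M)$. This yields $\|\ukn\|_{\RL^2(\Omega^\eps)} \le C$ uniformly in $\eps$.

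Combining the three estimates, $\|\Ukn\|_{\RH^1(M)}^2 = \|\Ukn\|_{\RL^2(M)}^2 + \|\nabla\Ukn\|_{\RL^2(M)}^2 \le C$ uniformly in $\eps < \eps_0$. Since bounded sequences in the Hilbert space $\RH^1(M)$ have weakly convergent subsequences, we extract one, relabelled by $\eps$, converging weakly in $\RH^1(M)$; this proves the lemma. The only genuinely delicate point is the $\RL^2$ bound on $\ukn$: one cannot simply invoke a global Poincaré inequality on $\Omega^\eps$ since $\Omega^\eps$ degenerates as $\eps \to 0$, which is exactly why the cell-by-cell argument using Lemmas~\ref{lem:cheeg} and~\ref{lem:linftybound} is needed. (A cheaper alternative for the $\RL^2$ bound: since $\|\ukn\|_{\RL^\infty(\Omega^\eps)} \le C$ directly, $\|\ukn\|_{\RL^2(\Omega^\eps)}^2 \le C^2 \Vol_g(\Omega^\eps) \le C^2\Vol_g(M)$, making Lemma~\ref{lem:cheeg} unnecessary for this particular step — though it is needed elsewhere in Section~\ref{sec:homo}.)
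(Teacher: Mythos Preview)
Your proof is correct and follows essentially the same route as the paper: bound $\|\nabla\ukn\|_{\RL^2(\Omega^\eps)}$ via the eigenvalue equation and Lemma~\ref{lem:boundstek}, bound $\|\ukn\|_{\RL^2(\Omega^\eps)}$ via the $\RL^\infty$ estimate of Lemma~\ref{lem:linftybound}, and pass to $\RH^1(M)$ through the uniform harmonic extension bound of Lemma~\ref{lem:harmextbounded}. The paper goes directly to the ``cheaper alternative'' you mention at the end, so your cell-by-cell Poincar\'e--Wirtinger detour via Lemma~\ref{lem:cheeg} is correct but superfluous here, as you yourself observe.
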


\begin{proof}
  It suffices to show that the sequence $\set{\Ukn}$ is bounded in
  $\RH^1(\Omega)$ as $\eps \to 0$. By Lemma \ref{lem:harmextbounded}, we have that
  \begin{equation}
    \norm{\Ukn}_{\RH^1(M)} \ll_{M,\beta}  \norm{\ukn}_{\RH^1(\Omega^\eps)}
  \end{equation}
  On the other hand, we have that
  \begin{equation}
    \norm{\nabla \ukn}_{\RL^2(\Omega^\eps)^d}^2 \le \sigma_k^{(\eps)} \le
    \lambda_k(M,g,\beta) + \smallo[M,\beta,k]{1},
  \end{equation}
  where the last bound follows from Lemma~\ref{lem:boundstek}.
  Furthermore, it follows from Lemma \ref{lem:linftybound} that
    $$
    \norm{\ukn}_{\RL^2(\Omega^\eps)} \le \Vol_g(\Omega^\eps)^{1/2}
    \norm{\ukn}_{\RL^\infty(\Omega^\eps)} = \bigo[M,\beta]{1} .
  $$
  Combining all of this yields indeed that the sequence $\set{\Ukn}$ is
  uniformly bounded in $\RH^1(M)$, so that it has a subsequence weakly
  converging in $\RH^1(M)$.
\end{proof}

\begin{prop}
  Let $k \in \N$. As $\eps \to 0$, the pairs $(\Ukn, \sigma_k^{(\eps)})$
  converge to a pair $(f,\lambda)$, so that $f$ is an eigenfunction of the
  weighted Laplace problem on $M$ with eigenvalue $\lambda$, the convergence of $\Ukn$ being weak in $\RH^1$.
\end{prop}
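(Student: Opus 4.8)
The plan is to pass to the limit in the weak formulations of the Steklov eigenvalue problems. By Lemma~\ref{lem:boundstek} the eigenvalues $\sigma_k^{(\eps)}$ are bounded, so along a subsequence $\sigma_k^{(\eps)} \to \lambda$ for some $\lambda \ge 0$; by Lemma~\ref{lem:boundh1}, along a further subsequence $\Ukn \rightharpoonup f$ weakly in $\RH^1(M)$, and by Rellich compactness $\Ukn \to f$ strongly in $\RL^2(M)$. The weak formulation of Problem~\eqref{prob:steklovhom} reads: for every $v \in \RH^1(\Omega^\eps)$,
\begin{equation}
  \int_{\Omega^\eps} \nabla v \cdot \nabla \ukn \de \mu_g = \sigma_k^{(\eps)} \int_{\del \Omega^\eps} v\, \ukn \de A_g.
\end{equation}
Since $\Ukn$ is harmonic in $\BT^\eps$, for any test function $v \in \RH^1(M)$ (restricted to $\Omega^\eps$ on the left, and using that the harmonic extension minimises Dirichlet energy) we may rewrite this over all of $M$, up to an error controlled by the Dirichlet energy of $\Ukn$ on the holes, which by Lemma~\ref{lem:harmextbounded} is itself controlled. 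So the task reduces to showing, for a fixed smooth $v$:
\begin{equation}
  \int_{M} \nabla v \cdot \nabla \Ukn \de \mu_g \xrightarrow{\eps \to 0} \int_M \nabla v \cdot \nabla f \de \mu_g
\end{equation}
(immediate from weak $\RH^1$ convergence) and
\begin{equation}
  \sigma_k^{(\eps)} \int_{\del \Omega^\eps} v\, \ukn \de A_g \xrightarrow{\eps \to 0} \lambda \int_M v\, f\, \beta \de \mu_g.
\end{equation}

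The second limit is the crux. One writes $\int_{\del \Omega^\eps} v\,\ukn \de A_g = \int_{\del \Omega^\eps} v\, \Ukn \de A_g$ and compares it with $\int_M v\, \Ukn\, \beta \de \mu_g$. The natural device is to exploit the defining property~\eqref{eq:holedef}: on each Vorono\u{\i} cell, $\CH^{d-1}(\del B_{r_\eps}(p)) = \beta(p)\Vol_g(V_p^\eps)$, so for a constant function the boundary integral over $\del B_{r_\eps}(p)$ equals $\beta(p)$ times the volume integral over $V_p^\eps$ exactly. For $v\,\Ukn$ not constant, I would replace it on each cell by its average (or a suitable constant) and control the discrepancy by the oscillation: the oscillation of $v$ is $O(\eps)$ on each cell by smoothness, and the oscillation of $\Ukn$ is controlled in $\RL^2$-averaged sense via the uniform Poincar\'e--Wirtinger inequality of Lemma~\ref{lem:cheeg} together with the uniform trace bound of Lemma~\ref{lem:traceH1L2}; the key point is that $\CH^{d-1}(\del B_{r_\eps}(p)) \asymp \eps^d$ matches $\Vol_g(Q_p^\eps) \asymp \eps^d$, so the cell-by-cell error terms sum to something $o(1)$ after using Cauchy--Schwarz, the $\RL^\infty$ bound on $\ukn$ from Lemma~\ref{lem:linftybound}, and the bound $\norm{\nabla \ukn}_{\RL^2}^2 \le \sigma_k^{(\eps)} = O(1)$. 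Strong $\RL^2(M)$ convergence $\Ukn \to f$ then upgrades $\int_M v\,\Ukn\,\beta \de \mu_g \to \int_M v\, f\, \beta \de \mu_g$, and combined with $\sigma_k^{(\eps)} \to \lambda$ we obtain the desired limit.

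Putting the two limits together yields, for every $v \in \RH^1(M)$,
\begin{equation}
  \int_M \nabla v \cdot \nabla f \de \mu_g = \lambda \int_M v\, f\, \beta \de \mu_g,
\end{equation}
which is exactly the weak formulation~\eqref{eq:weaklaplace}; by elliptic regularity $f$ is then a genuine eigenfunction of Problem~\eqref{prob:laplace} with eigenvalue $\lambda$, provided $f \not\equiv 0$. Nontriviality of $f$ is the remaining subtlety: it follows because $\norm{\ukn}_{\RL^2(\del \Omega^\eps)} = 1$, and applying the weak-$*$ convergence argument above to $v = \Ukn$ itself (or rather comparing $\int_{\del\Omega^\eps} (\Ukn)^2 \de A_g$ with $\int_M (\Ukn)^2 \beta \de \mu_g$ via the same cell-by-cell estimate) shows $\int_M f^2 \beta \de \mu_g$ is bounded below away from $0$; I expect the authors defer the precise normalisation and the identification $\lambda = \lambda_k(M,g,\beta)$ to the subsequent ``no loss of mass'' step, so here it suffices to record that $(f,\lambda)$ is \emph{some} eigenpair. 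The main obstacle is unquestionably the boundary-to-bulk comparison in the second limit: making the cell-by-cell replacement of $v\,\Ukn$ by constants rigorous, and checking that all the accumulated errors — controlled by oscillation, the uniform Poincar\'e inequality, the uniform trace bounds, and the volume/perimeter matching $\eps^d \asymp \eps^d$ — genuinely vanish as $\eps \to 0$, uniformly in the choice of test function from a fixed countable dense family.
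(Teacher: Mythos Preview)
Your overall scheme matches the paper's: extract subsequential limits via Lemmas~\ref{lem:boundstek} and~\ref{lem:boundh1}, reduce to smooth test functions (the paper makes this step explicit via the Banach--Steinhaus theorem applied to the boundary functionals), pass to the limit in the Dirichlet term by weak $\RH^1$ convergence, kill the $\int_{\BT^\eps}\nabla\Ukn\cdot\nabla v$ contribution via smallness of $\Vol_g(\BT^\eps)$, and isolate the boundary integral as the real difficulty; you also correctly note that nontriviality of $f$ is deferred to the subsequent mass lemma.

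The treatment of the boundary integral is where your route diverges from the paper. You propose replacing $v\,\ukn$ on each cell $Q_p^\eps$ by its average and estimating the residual via Poincar\'e--Wirtinger together with the trace bound of Lemma~\ref{lem:traceH1L2}. The paper instead introduces on each cell an auxiliary \emph{corrector} $\Psi_p^\eps$ solving $\Delta\Psi_p^\eps=c_{\eps,p}$ in $Q_p^\eps$ with $\partial_\nu\Psi_p^\eps=1$ on $\partial B_{r_\eps}(p)$ and $\partial_\nu\Psi_p^\eps=0$ on $\partial V_p^\eps$; its weak formulation gives the exact identity
\[
\int_{\partial B_{r_\eps}(p)}w\,\de A_g-c_{\eps,p}\int_{Q_p^\eps}w\,\de\mu_g=\int_{Q_p^\eps}\nabla\Psi_p^\eps\cdot\nabla w
\]
for any $w$, and the remaining work is the energy bound $\norm{\nabla\Psi_p^\eps}_{\RL^2(Q_p^\eps)}=O(\eps^{d/2})$. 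The two approaches are dual: $\norm{\nabla\Psi_p^\eps}_{\RL^2}$ is precisely the norm of the functional $w\mapsto\int_{\partial B_{r_\eps}(p)}(w-m_w)\,\de A_g$ on mean-zero functions in $\RH^1(Q_p^\eps)$, so the corrector encodes exactly the cell-wise trace constant your average argument needs. The soft spot in your sketch is that Lemma~\ref{lem:traceH1L2} is a \emph{global} bound $\RH^1(\Omega^\eps)\to\RL^2(\partial\Omega^\eps)$ and does not localise: the cell averages $m_p$ differ from cell to cell, so $w-m_p$ patched together is not in $\RH^1(\Omega^\eps)$ and the global trace cannot be applied to it. To close your argument you must supply a cell-wise trace inequality with the correct $\eps$-dependence --- which is tantamount to bounding $\norm{\nabla\Psi_p^\eps}$ by other means (say a direct computation in geodesic polar coordinates on the annulus $A_p^\eps$). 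The corrector construction buys a clean representation formula in the spirit of classical homogenisation; your cell-average picture is the right intuition but needs this extra analytic input to become a proof.
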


\begin{proof}
  Denote by $(\phi,\lambda)$ the weak limit (up to a subsequence) of
  $(\Ukn,\sigma_k^{(\eps)})$, we now aim to show that they are weak solutions of the
  weighted Laplace eigenvalue problem on $M$, i.e. that they satisfy
  \eqref{eq:weaklaplace}.
  For a real valued $v \in \RH^1(M)$, we have, using the
  weak formulation of Problem \eqref{prob:steklovhom} that
  \begin{equation}
    \label{eq:weakform}
    \int_M \nabla \Ukn \cdot \nabla v \de \mu_g = \sigma_k^{(\eps)}
    \int_{\del \Omega^\eps} \Ukn v \de A_g +
    \int_{\BT^\eps} \nabla \Ukn \cdot \nabla v \de \mu_g.
  \end{equation}
  In order to be able to consider smooth test functions in this weak
  formulation, we need to ensure that the family of bounded linear functionals $\Phi^\eps \in \RH^1(M)^*$
  given by
  \begin{equation}
    \Phi^\eps(v) :=  \sigma_k^{(\eps)}
    \int_{\del \Omega^\eps} \Ukn v \de A_g.
  \end{equation}
  is bounded uniformly in $\eps < \eps_0$. It indeed is, since we know from Lemma \ref{lem:boundstek} that $\sigma_k^{(\eps)}$
is bounded as $\eps \to 0$, and we have
\begin{equation}
  \begin{aligned}
  \abs{  \int_{\del \Omega^\eps} \Ukn v \de A_g} &\le 
  \norm{\gamma^\eps}_{\RH^1(\Omega^\eps) \to
  \RL^2(\del \Omega^\eps)}^2 \norm{\Ukn}_{\RL^2(\del\Omega^\eps)}
  \norm{v}_{\RH^1(M)} \\
  &= 
  \norm{\gamma^\eps}_{\RH^1(\Omega^\eps) \to
  \RL^2(\del \Omega^\eps)}^2 \norm{v}_{\RH^1(M)}.
\end{aligned}
\end{equation}
We have shown in Lemma \ref{lem:traceH1L2} that $\norm{\gamma^\eps}$ was
 bounded uniformly in $\eps$ for $0 < \eps < \eps_0$. By the Banach-Steinhaus
 theorem, the family $\set{\Phi^\eps}$ is uniformly bounded. We may assume
from now on that in the weak formulation of Problem \eqref{prob:steklovhom}, we
consider only $v$ in a dense subspace of $\RH^1(M)$, in particular we assume
$v \in \RC^\infty(M)$. 

That the first term in \eqref{eq:weakform} converges follows from weak convergence of $\Ukn$. That the
last term in \eqref{eq:weakform} converges to $0$ follows from the
Cauchy--Schwarz inequality and the observation that since $v \in \RC^\infty(M)$,
\begin{equation}
  \int_{\BT^\eps} \abs{\nabla v}^2 \de \mu_g \le \max_{x \in M} \abs{\nabla v(x)}^2
  \Vol_g(\BT^\eps) \xrightarrow{\eps \to 0} 0.
\end{equation}

We now study the boundary term in \eqref{eq:weakform}.
  For every $p \in \BS^\eps$, define a function $\Psi_p^\eps : Q_p^\eps \to \R$ satisfying the weak
  variational problem
  \begin{equation}
    \forall v \in \RH^1(Q_p^\eps), \qquad \int_{Q_p^\eps} \nabla \Psi_p^\eps \cdot
    \nabla v \de \mu_g = - c_{\eps,p} \int_{Q_p^\eps} v \de \mu_g +  \int_{\del
    B_{r_\eps}(p)} v \de A_g
    \label{eq:weakpsi}
  \end{equation}
  Choosing $v \equiv 1$, we see that a necessary and sufficient condition for
  the existence of a solution (see \cite[Theorem 5.7.7]{TaylorI}) is that
    uniformly in $p$,
    \begin{equation}
      c_{\eps,p} = \frac{\CH^{d-1}(\del B_{r_\eps}(p))}{\Vol_g(Q_p^\eps)} =
      \beta(p) + \bigo[M,\beta]{\eps^{\frac{d}{d-1}}},
    \end{equation}
    and uniqueness is guaranteed by requiring that $\int_{Q_p^\eps}\Psi_p^\eps
    \de A_g = 0$.  
    The function $\Psi_p^\eps$ satisfies the differential equation
    \begin{equation}
      \begin{cases}
        \Delta \Psi_p^\eps = c_{\eps,p} & \text{in } Q_p^\eps \\
        \del_\nu \Psi_p^\eps = 1 & \text{on } \del B_{r_\eps}(p) \\
        \del_\nu \Psi_p^\eps = 0 & \text{on } \del V_p^\eps.
      \end{cases}
      \label{eq:strongPsi}
    \end{equation}
    We have that for all test functions $v$, 
\begin{equation}
  \int_{\del \Omega^\eps} \ukn v \de A_g = \sum_{p \in \BS^\eps}
  \int_{Q_p^\eps} \nabla
  \Psi_p^\eps \cdot
  \nabla(\ukn v) \de \mu_g + 
  \underbrace{\sum_{p \in \BS^\eps} c_{\eps,p} \int_{Q_p^\eps} \ukn v \de \mu_g}_{\to
   \int_M
  \phi v \beta\de \mu_g},
  \label{eq:extrep}
\end{equation}
where convergence of the last term comes from strong $\RL^2$ convergence of
$\set{\Ukn}$. We show that the other term converges to $0$. 
Applying the generalised Hölder inequality, we obtain
\begin{equation}
  \label{eq:genhold}
  \abs{\int_{Q_p^\eps}\nabla \Psi_p^\eps \cdot \nabla (\ukn v) \de \mu_g} \le
  \norm{v}_{\RC^1(Q_p^\eps)}\norm{\nabla\Psi_p^\eps}_{\RL^2(Q_p^\eps)^d}
  \norm{\Ukn}_{\RH^1(Q_p^\eps)}.
\end{equation}
 Since $v$ is smooth,
$\norm{v}_{\RC^1(M)}$ is bounded, and  a fortiori the restriction to
$Q_p^\eps$ is bounded as well.
By applying the variational characterisation of $\Psi_p^\eps$ to itself, we obtain
\begin{equation}
  \label{eq:nabpsin}
  \begin{aligned}
    \norm{\nabla\Psi_p^\eps}^2_{\RL^2(Q_p^\eps)^d} &=  \int_{\del
    B_{r_\eps}(p)} \Psi_p^\eps \de
  A_g  \\
  &\le  \norm{\gamma^\eps} \sqrt{\CH^{d-1}(\del B_{r_\eps}(p))}
  \norm{\Psi_p^\eps}_{\RH^1(Q_p^\eps)}.
  \end{aligned}
\end{equation}
By Lemma \ref{lem:traceH1L2}, $\norm{\gamma^\eps}$ is bounded. Since
$\Psi_p^\eps$ has average $0$ on $Q_p^\eps$, the Poincaré--Wirtinger
inequality tells us that
\begin{equation}
  \norm{\Psi_p^\eps}_{\RH^1(Q_p^\eps)} \le \Biggl( 1 +
    \frac{1}{\mu_1(Q_p^\eps)}
  \Biggr)^{1/2}\norm{\nabla \Psi_p^\eps}_{\RL^2(Q_p^\eps)^d}.
\end{equation}
By Lemma \ref{lem:cheeg}, $\mu_1(Q_p^\eps) \to \infty$ as $\eps \to 0$. This, along with
the fact that $\CH^{d-1}(\del B_{r_\eps}(p)) \asymp \eps^d$ tells us that
\begin{equation}
  \label{eq:boundpsi}
  \norm{\nabla \Psi_p^\eps}_{\RL^2(Q_p^\eps)^d} = \bigo{\eps^{d/2}}.
\end{equation}
Putting this estimate and \eqref{eq:genhold} into \eqref{eq:extrep} yields
\begin{equation}
  \begin{aligned}
  \sum_{p \in \BS^\eps} \int_{Q_p^\eps} \nabla \Psi_p^\eps \cdot \nabla (\ukn v)
\de \mu_g &\le \sum_{p \in
\BS^\eps}\norm{\gamma^\eps}\norm{v}_{\RC^1(M)}\sqrt{\CH^{d-1}(\del B_{r_\eps}(p))}
  \norm{\Ukn}_{\RH^1(Q_p^\eps)}  \\
  &\ll_{M,\beta,v} \eps^{d/2} \norm{\Ukn}_{\RH^1(M)},
\end{aligned}
\end{equation}
which goes to $0$ as $\eps \to 0$. Therefore, in view of \eqref{eq:extrep} and
\eqref{eq:weakform}, we have that if $(\phi,\lambda)$ are the limits of
$(\Ukn,\sigma_k^\eps)$ they do indeed satisfy the weak variational problem
\begin{equation}
  \forall v \in \RH^1(M), \qquad \int_{M} \nabla \phi \cdot \nabla v \de \mu_g =
  \lambda  \int_M \phi v \beta\de \mu_g,
\end{equation}
in other word $\phi$ is a weak eigenfunction of the weighted Laplacian on $M$ with
eigenvalue $\lambda$.
\end{proof}

Now that we have established convergence to solutions of the limit problem, we
need the following lemma to show that there is no mass lost in the interior.

\begin{lemma}
  Let $\phi$ be the weak limit in $\RH^1$ of $\Ukn$. Then,
  \begin{equation}
  1 =  \lim_{\eps \to 0} \int_{\del \Omega^\eps} (\Ukn)^2 \de A_g = \int_M \phi^2
  \beta
  \de \mu_g.
  \end{equation}
  \label{lem:nolostmass}
\end{lemma}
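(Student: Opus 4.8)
Since $\Ukn$ restricts to $\ukn$ on $\del\Omega^\eps$ and each $\ukn$ is normalised in $\RL^2(\del\Omega^\eps)$, the first equality is free: $\int_{\del\Omega^\eps}(\Ukn)^2\de A_g=1$ for every $\eps$. So the whole task is the second equality, i.e. $\int_M\phi^2\beta\de\mu_g=1$. The plan is to compare $\ukn$ and $\phi$ directly \emph{on the boundary}: I will show that $\norm{\ukn-\phi}_{\RL^2(\del\Omega^\eps)}\to0$, and then read off the conclusion from the weak-$*$ convergence \eqref{eq:weakstar} applied to the continuous test function $\phi^2$.

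For the boundary comparison, first note that $\phi$, being a weak eigenfunction of the weighted Laplacian on the closed manifold $M$, is smooth by elliptic regularity; in particular $\phi^2\in\RC(M)$ and $\phi$ restricts to an $\RL^2$ function on each $\del\Omega^\eps$. Since $\Ukn\rightharpoonup\phi$ weakly in $\RH^1(M)$ and $M$ is compact, Rellich--Kondrachov gives $\Ukn\to\phi$ strongly in $\RL^2(M)$, hence $\norm{\ukn-\phi}_{\RL^2(\Omega^\eps)}\le\norm{\Ukn-\phi}_{\RL^2(M)}\to0$. Set $w^\eps:=(\ukn-\phi)^2$. Exactly as in the proof of Lemma~\ref{lem:traceH1L2}, $w^\eps\in\RBV(\Omega^\eps)$ with $\nabla w^\eps=2(\ukn-\phi)\nabla(\ukn-\phi)$, and I would estimate its $\RBV$ norm by
\[
  \norm{w^\eps}_{\RL^1(\Omega^\eps)}=\norm{\ukn-\phi}_{\RL^2(\Omega^\eps)}^2\longrightarrow0
\]
together with the Cauchy--Schwarz bound
\[
  \int_{\Omega^\eps}\abs{\nabla w^\eps}\de\mu_g\le2\,\norm{\ukn-\phi}_{\RL^2(\Omega^\eps)}\,\norm{\nabla(\ukn-\phi)}_{\RL^2(\Omega^\eps)^d}.
\]
Here $\norm{\nabla\ukn}_{\RL^2(\Omega^\eps)^d}^2=\sigma_k^{(\eps)}$, which is $\bigo{1}$ by Lemma~\ref{lem:boundstek}, while $\norm{\nabla\phi}_{\RL^2}$ is fixed; so the second factor is bounded and the first tends to $0$, whence $\norm{w^\eps}_{\RBV(\Omega^\eps)}\to0$. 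Feeding this into the \emph{uniform} $\RBV$-trace estimate of Lemma~\ref{lem:traceBVL1},
\[
  \norm{\ukn-\phi}_{\RL^2(\del\Omega^\eps)}^2=\norm{w^\eps}_{\RL^1(\del\Omega^\eps)}\le C\,\norm{w^\eps}_{\RBV(\Omega^\eps)}\longrightarrow0,
\]
as desired.

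To finish, the reverse triangle inequality together with $\norm{\ukn}_{\RL^2(\del\Omega^\eps)}=1$ gives $\int_{\del\Omega^\eps}\phi^2\de A_g=\norm{\phi}_{\RL^2(\del\Omega^\eps)}^2\to1$. Since $\phi^2\in\RC(M)$, the weak-$*$ convergence $\de A_{\del\Omega^\eps}\rightharpoonup\beta\de\mu_g$ recorded in \eqref{eq:weakstar} yields $\int_{\del\Omega^\eps}\phi^2\de A_g\to\int_M\phi^2\beta\de\mu_g$, so $\int_M\phi^2\beta\de\mu_g=1$. Equivalently, $\int_{\del\Omega^\eps}(\ukn)^2\de A_g=1\to\int_M\phi^2\beta\de\mu_g$, which is the claim.

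The step I expect to be the crux is the boundary convergence $\norm{\ukn-\phi}_{\RL^2(\del\Omega^\eps)}\to0$. The naive attempt---bounding the boundary $\RL^2$ norm by the full $\RH^1(\Omega^\eps)$ norm of $\ukn-\phi$ via Lemma~\ref{lem:traceH1L2}---fails, because the gradients $\nabla(\ukn-\phi)$ are only \emph{bounded}, not small, in $\RL^2$. The point is that the \emph{product} $(\ukn-\phi)\nabla(\ukn-\phi)$ is small in $\RL^1$ (one factor small in $\RL^2$, the other bounded), which is precisely what makes $(\ukn-\phi)^2$ small in $\RBV$; the nontrivial geometric input---that the trace $\RBV(\Omega^\eps)\to\RL^1(\del\Omega^\eps)$ is bounded \emph{uniformly} in $\eps$ despite the perforations (Lemma~\ref{lem:traceBVL1})---then transports this smallness to the boundary. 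Note that the representation formula \eqref{eq:extrep} and the auxiliary functions $\Psi_p^\eps$ are not needed for this argument.
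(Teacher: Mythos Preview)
Your proof is correct and takes a genuinely different route from the paper. The paper reuses the cell-by-cell representation formula \eqref{eq:extrep} with the test function $v=\ukn$, then bounds the remainder $\sum_p\int_{Q_p^\eps}\nabla\Psi_p^\eps\cdot\nabla(\ukn)^2$ using the estimate \eqref{eq:boundpsi} together with the uniform $\RL^\infty$ bound on $\ukn$ from Lemma~\ref{lem:linftybound}. Your argument bypasses the auxiliary functions $\Psi_p^\eps$ entirely: you exploit that $\phi$ is already known to be a weak (hence smooth) eigenfunction, so $\phi^2$ is a legitimate test function for the weak-$*$ convergence \eqref{eq:weakstar}, and you bridge the gap between $\ukn$ and $\phi$ on $\del\Omega^\eps$ by pushing the $\RL^2(M)$-smallness of $\Ukn-\phi$ through the uniform $\RBV$ trace of Lemma~\ref{lem:traceBVL1} applied to $(\ukn-\phi)^2$. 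The Cauchy--Schwarz trick that turns a small $\RL^2$ factor times a bounded gradient into a small $\RW^{1,1}$ norm is exactly the right idea, and you are correct that the naive use of Lemma~\ref{lem:traceH1L2} would fail here.

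Two comparative remarks. First, your proof does \emph{not} use the $\RL^\infty$ bound of Lemma~\ref{lem:linftybound}, whereas the paper's does; both proofs ultimately rest on Lemma~\ref{lem:traceBVL1}, but yours invokes it directly while the paper routes through it via Lemma~\ref{lem:linftybound}. Second, your intermediate conclusion $\norm{\ukn-\phi}_{\RL^2(\del\Omega^\eps)}\to0$ is precisely the statement invoked later in the proof of Theorem~\ref{thm:homo} (where the paper cites Lemma~\ref{lem:nolostmass} for this fact), so your approach makes that subsequent step entirely transparent. The price you pay is the appeal to elliptic regularity for $\phi$, but that is standard and already implicit once the preceding Proposition has been established.
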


\begin{proof}
  By considering $v = \ukn$ in equation \eqref{eq:extrep} we have that
  \begin{equation}
    \int_{\del \Omega^\eps} (\ukn)^2 \de A_g = \sum_{p \in \BS^\eps} \int_{Q_p^\eps}
  \nabla
  \Psi_p^\eps\cdot
  \nabla (\ukn)^2 \de \mu_g + 
  \underbrace{\sum_{p \in \BS^\eps} c_{\eps,p} \int_{Q_p^\eps} (\ukn)^2 \de
  \mu_g}_{\to
    \int_M \phi^2\beta
  \de \mu_g}.
  \end{equation}
Once again, we have to show that the other term converges to $0$ as $\eps \to
0$. 
From the generalised
Hölder inequality, we see that
\begin{equation}
  \begin{aligned}
    \int_{Q_p^\eps} \nabla \Psi_p^\eps \cdot \nabla (\ukn)^2 \de \mu_g &= 2
    \int_{Q_p^\eps} \ukn \nabla \Psi_p^\eps \cdot \nabla \ukn \de \mu_g \\
    &\le 2\norm{\nabla\Psi_p^\eps}_{\RL^2(Q_p^\eps)^2}
    \norm{\ukn}_{\RL^\infty(Q_p^\eps)}\norm{\nabla \ukn}_{\RL^2(Q_p^\eps)^2}.
\end{aligned}
\end{equation}
It follows from Lemma \ref{lem:linftybound} that
$\norm{\ukn}_{\RL^\infty(Q_p^\eps)}$ is bounded, uniformly in $\eps$. 
Furthermore, it follows from equation \eqref{eq:boundpsi}
that $\norm{\nabla \Psi_p^\eps}_{\RL^2(Q_p^\eps)^d} \ll \eps^{d/2}$, so that
\begin{equation}
  \begin{aligned}
    \sum_{p \in \BS^\eps} \int_{Q_p^\eps} \nabla \Psi_p^\eps \cdot \nabla(\ukn)^2
    \de \mu_g & \le \sum_{p \in \BS^\eps} C \eps^{d/2} \norm{\nabla
      \ukn}_{\RL^2(Q_p^\eps)^d} \\
      &\le C \eps^{d/2} \norm{\nabla \ukn}_{\RL^2(\Omega^\eps)^d},
\end{aligned}
\end{equation}
which goes to $0$ as $\eps \to 0$, thereby finishing the proof.
\end{proof}

\begin{proof}[Proof of Theorem \ref{thm:homo}]
  We first prove that all the eigenvalues converge, proceeding by induction on
  the rank $k$. The base case $k = 0$ is trivial : indeed, the eigenvalue
  $\sigma_0^{(\eps)}$ obviously converges to $\lambda_0 = 0$, and the normalised
  constant eigenfunctions of each problem satisfy by construction
  \begin{equation}
    \begin{aligned}
      U_0^{(\eps)}(x) &= \CH^{d-1}(\del \Omega^\eps)^{-1/2} \\
      &\xrightarrow{\eps \to 0} \left(\int_{M} \beta \de \mu_g\right)^{-1/2}
      \\
      &= \phi_0(x)
\end{aligned}
\end{equation}
  Suppose now that for all $0 \le j \le k-1$, $U_j^{(\eps)}$ converges to $\phi_j$
  weakly in $\RH^1(\Omega)$. 
  We have already shown in Lemma \ref{lem:boundstek} that for all $k$,
  $\sigma_k^{(\eps)} \le \lambda_k(M,g,\beta) + \smallo 1$. We now show that the
  eigenvalues $\lambda_k(M,g,\beta)$ are
  bounded above by $\sigma_k^{(\eps)} + \smallo 1$. Suppose that the limit
eigenpair for $(\sigma_k^{(\eps)},\ukn)$ is  $(\lambda_j,\phi_j)$
for some $0 \le j \le k-1$. We have
  that
  \begin{equation}
    \begin{aligned}
    0 &= \lim_{\eps \to 0} \int_{\del \Omega^\eps} u_k^{(\eps)} u_{j}^{(\eps)}
    \de A_g \\
    &= \lim_{\eps \to 0} \int_{\del \Omega^{\eps}} u_k^{(\eps)} \phi_j \de A_g +
      \int_{\del \Omega^{\eps}} u_k^{(\eps)} (u_j^{(\eps)} - \phi_j) \de A_g.
  \end{aligned}
  \end{equation}
  The first term converges to $1$ by the assumption that
  \begin{equation}
    \int_{M} \phi_j^2 \beta \de \mu_g = 1.
  \end{equation}
   For the second term, Cauchy-Schwarz inequality and the
  normalisation of $\ukn$ tells us that
  \begin{equation}
    \int_{\del \Omega^\eps} \ukn(u_j^{(\eps)} - \phi_j) \de A_g \le \norm{u_j^{(\eps)} -
    f_j}_{\RL^2(\del \Omega^\eps)}.
  \end{equation}
  It follows from Lemma \ref{lem:nolostmass} that this limit converges to $0$,
resulting in a contradiction. This means that the eigenvalue $\lambda_k$ to
which $\sigma_k^{(\eps)}$ converges has a rank higher than $k-1$. Combining this
with the upper bound on $\lambda_k$ implies that $\sigma_k^{(\eps)}$ converges
indeed to $\lambda_k$. Weak convergence of the eigenfunctions therefore follows, up
to taking a subsequence when the eigenvalues are multiple.
\end{proof}

\section{Isoperimetric inequalities}
\label{sec:isoineq}

We are now in a position to prove Theorem \ref{thm:main}.

\begin{proof}[Proof of Theorem \ref{thm:main}]
Let $\delta > 0$ and $g_\delta$ be a metric on the surface $M$ such that
\begin{equation}
  \Lambda_k(M,g_\delta) \ge \Lambda_k^*(M) - \frac \delta 2.
\end{equation}

By taking $\beta = 1$ in Theorem \ref{thm:approx}, there is a family of domains
$\Omega^\eps \subset M$ such that for all $0 <\eps < \eps_0$, $\CH^1(\del
\Omega^\eps) = \Vol_g(M)$
and such that $\sigma_k(\Omega^\eps,g_\delta) \to \lambda_k(M,g_\delta)$ as $\eps \to 0$. In other
words,
\begin{equation}
  \lim_{\eps \to 0} \Sigma_k(\Omega^\eps,g_\delta) = \Lambda_k(M,g_\delta),
\end{equation}
so that there is $\eps > 0$ such that $\Sigma_k(\Omega^\eps,g_\delta) \ge
\Lambda_k^*(M) -  \delta$. Since $\delta$ is arbitrary, we have that
\begin{equation}
  \Sigma_k^*(M) \ge \Lambda_k^*(M).
\end{equation}
for all $k \in \N$ and surfaces $M$.
\end{proof}

\subsection{\texorpdfstring{Lower bounds and exact values for
$\Sigma_k^*$}{Lower bounds and exact values for large Steklov eigenvalues}}
For any closed surface $M$ for which $\Lambda_k^*(M)$ is known,
Theorem~\ref{thm:main}, along with Corollary \ref{cor:equality} leads to an
exact value for $\Sigma_k^*$ when $k \in \set{1,2}$, whereas it yields lower
bounds when $k \ge 3$. 
We have already seen that $\Sigma_1^*(\S^2)=\Lambda_1^*(\S^2)=8\pi$ in
Corollary~\ref{cor:sphere}. More generally, it follows from
Karpukhin--Nadirashvili--Penskoi--Polterovich~\cite{KNPP} that
$$\Sigma_k^*(\S^2)\geq \Lambda_k^*(\S^2)= 8\pi k,$$
with equality when $k \le 2$. The supremum is saturated by a sequence of
Riemannian metrics degenerating to  $k$ kissing spheres of equal area.
It follows from Nadirashvili~\cite{nad96} that
$$\Sigma_1^*(\T^2)= \Lambda_1(\T^2)=\frac{8\pi^2}{\sqrt{3}}.$$
The maximizer is the equilateral flat torus.
For the orientable surface $M$ of genus two, it follows from
Nayatoni--Shoda \cite{NayaSho2019} that
$$\Sigma_1^*(M) = \Lambda_1^*(M)=16\pi.$$
Where the equality $\Lambda_1^*(M)=16\pi$ was initially conjectured in the paper~\cite{JLNNP} by
Jakobson--Levitin--Nadirashvili--Nigam--Polterovich. This time the
maximizer is realized by a singular conformal metric on the Bolza surface.
Some results are also known for non-orientable surfaces. For instance,
it follows from the work of Li--Yau~\cite{liyau} that for the projective plane,
$$\Sigma_1^*(\RP^2) = \Lambda_1^*(\RP^2)=12\pi,$$
where the maximal metric is the canonical Fubini--Study metric.
It follows from Nadirashvili--Penskoi \cite{nadpen} that
$$\Sigma_2(\RP^2) =\Lambda_2^*(\RP^2) = 20 \pi,$$
and from Karpukhin~\cite{karpRP2} that for all $k \ge 3$,
$$\Sigma_k(\RP^2) \ge \Lambda_k^*(\RP^2) = 4\pi(2 k + 1).$$
This time the maximal metric is achieved by a sequence of surfaces degenerating to a
union of a projective plane and $k-1$ spheres with their canonical metrics, the
ratio of the area of the projective planes to the area of the union of the
spheres being $3:2$.

Finally,  it follows from El Soufi--Giacomini--Jazar~\cite{EGJ} and Cianci--Karpukhin--Medvedev~\cite{CKM}
that
$$\Sigma_1(\mathbb{KL}) = \Lambda_1^*(\mathbb{KL})=12 \pi E\Biggl(\frac{2\sqrt 2}{3}\Biggr),$$
where $E$ is the complete elliptic integral of the second type.
The supremum for  is realized by a bipolar Lawson surface corresponding to the $\tau_{3,1}$-torus. The equality for $\Lambda_1^*$ was first conjectured by
Jakobson--Nadirashvili--Polterovich~\cite{JNP}.
      
There are also situations where lower bounds for $\Lambda_k^*$ can be transfered
to $\Sigma_k^*$. For instance, restricting to flat metrics on $\T^2$, it follows
from Kao--Lai--Osting~\cite{KLO} and Lagac\'e~\cite{lagace} that
\begin{equation}\label{eq:KLOL}
  \Lambda_k^*(\T^2)_{\text{flat}}:=
\sup_{\substack{g\in\CG(M)}g  \\ \text{ flat}} \Lambda_k(M,g)\geq\frac{4\pi^2 \ceil{\frac k 2}^2}{\sqrt{\ceil{\frac k 2}^2 - \frac 1 4}}
\end{equation}
and that $\Lambda_k^*$ is realised by a family of flat tori degenerating to a circle as $k \to \infty$.
It follows from Theorem~\ref{thm:main} that
$$\Sigma_k^*(\T^2)_{\text{flat}} := \sup_{\substack{g \text{ flat} \\ \Omega \subset M}} \Sigma_k(\Omega)\ge\frac{4\pi^2 \ceil{\frac k 2}^2}{\sqrt{\ceil{\frac k 2}^2 - \frac 1 4}}.$$
Note that it is also conjecture in~\cite{KLO} that~\eqref{eq:KLOL} is an equality. 
We record one last general result following from the same strategy.
\begin{cor}
  \label{cor:explicit}
$$\Sigma_k^*(M)\ge \Lambda_1^*(M) + 8\pi(k-1)$$
\end{cor}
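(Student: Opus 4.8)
The plan is to deduce the statement from Theorem~\ref{thm:main}, so that it suffices to prove the purely intrinsic inequality
\begin{equation}
  \Lambda_k^*(M) \ge \Lambda_1^*(M) + 8\pi(k-1),
\end{equation}
after which $\Sigma_k^*(M)\ge\Lambda_k^*(M)$ gives the corollary. The bound for $\Lambda_k^*$ is obtained by a degeneration (gluing) construction, in exactly the spirit of the computation $\Lambda_k^*(\S^2)=8\pi k$ via kissing spheres recalled above: the point is to replace one of the $k$ spheres by a near-maximiser of $\Lambda_1$ on $M$ and keep the remaining $k-1$ round spheres.

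Concretely, fix $\eps>0$ and choose a metric $g$ on $M$ with $\Lambda_1(M,g)\ge \Lambda_1^*(M)-\eps$; after a scaling of $g$, which does not affect $\Lambda_1$, we may assume $\lambda_1(M,g)=1$, so that $\Vol_g(M)=\Lambda_1(M,g)$. Next take $k-1$ disjoint round spheres, each scaled so that its first nonzero Laplace eigenvalue equals $1$; since the round metric realises $\Lambda_1^*(\S^2)=8\pi$, each such sphere has volume $8\pi$. Since $M\#\S^2\cong M$, performing connected sums of $(M,g)$ with these $k-1$ spheres along necks of diameter $\delta$ produces a smooth metric $g_\delta$ on $M$ whose geometry is, for $\delta$ small, a small perturbation of the disjoint union $N:=(M,g)\sqcup(\S^2)^{\sqcup(k-1)}$.

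The disjoint union $N$ has exactly $k$ connected components, so $\lambda_0(N)=\dots=\lambda_{k-1}(N)=0$ while $\lambda_k(N)=\min\{\lambda_1(M,g),\lambda_1(\S^2)\}=1$, and $\Vol(N)=\Vol_g(M)+8\pi(k-1)$. By the standard spectral behaviour of the Laplacian under collapsing of thin necks, the eigenvalues of $(M,g_\delta)$ converge as $\delta\to 0$ to those of $N$; in particular $\lambda_k(M,g_\delta)\to 1$, while clearly $\Vol(M,g_\delta)\to\Vol_g(M)+8\pi(k-1)$. Hence
\begin{equation}
  \Lambda_k(M,g_\delta)\xrightarrow{\delta\to 0}\bigl(\Vol_g(M)+8\pi(k-1)\bigr)=\Lambda_1(M,g)+8\pi(k-1).
\end{equation}
Since $\Lambda_k^*(M)\ge\Lambda_k(M,g_\delta)$ for every $\delta>0$, letting $\delta\to 0$ and then $\eps\to 0$ yields $\Lambda_k^*(M)\ge\Lambda_1^*(M)+8\pi(k-1)$, and Theorem~\ref{thm:main} concludes the proof.

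The only delicate point is the spectral convergence $\lambda_k(M,g_\delta)\to\lambda_k(N)$, and within it the lower bound $\liminf_{\delta\to 0}\lambda_k(M,g_\delta)\ge 1$. The upper bound is immediate from the min-max principle applied to the $(k+1)$-dimensional test space spanned by the $k$ functions that are locally constant on the components (interpolated across the necks with Dirichlet energy $o(1)$) together with one genuine eigenfunction of eigenvalue $1$ on one of the components, suitably cut off near the neck. The lower bound is a decoupling estimate: if $(M,g_\delta)$ carried $k+1$ mutually orthogonal eigenfunctions of eigenvalue $\le 1-\eta$, restricting them to the components and discarding the necks (whose volume and capacity are $o(1)$) would produce $k+1$ nearly orthogonal functions of Rayleigh quotient $\le 1-\eta+o(1)$ spread among the components, contradicting the fact that the components together carry only $k$ eigenvalues strictly below $1$. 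Both halves are classical; see the collapsing-handle analysis related to \cite{chavelfeldman1,chavelfeldman2} and the gluing arguments used in the study of $\Lambda_k^*$, for instance in \cite{KNPP}. I expect this spectral-convergence step to be the main, and essentially the only, obstacle; everything else is a bookkeeping of scalings.
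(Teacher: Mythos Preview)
Your proof is correct and follows the same approach as the paper: reduce to $\Lambda_k^*(M)\ge\Lambda_1^*(M)+8\pi(k-1)$ via Theorem~\ref{thm:main}, then obtain the latter by gluing a near-maximiser for $\Lambda_1$ on $M$ to $k-1$ round spheres through degenerating necks. The paper simply cites Colbois--El~Soufi~\cite{colboisElSoufi} (see also~\cite{KLO,KNPP}) for this gluing inequality rather than sketching it, so your spectral-convergence step is already established in the literature and need not be reproved.
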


\begin{proof}
  This follows from the work of Colbois--El Soufi \cite{colboisElSoufi}, see also
      \cite{KLO,KNPP} for further discussion, where it is shown that one can
      glue in appropriate ratios maximisers for the first eigenvalue in a
      topological class with spheres to obtain bounds on the $k$th normalised
      eigenvalue of the Laplacian.
\end{proof}

\section{First Steklov eigenvalue of free boundary minimal surfaces}
\label{sec:fbms}
In view of the proof of Theorem \ref{thm:tetrahedral}, we recall a few
definitions.

\begin{defi}\label{def:fundamentaldomain}
Let $G$ be a subgroup of the group of isometries of $\mathbb{B}^3$. 
A submanifold $\Omega\subset\mathbb{B}^3$ is called invariant under the action
of $G$ if $\psi(\Omega)=\Omega$ for all $\psi\in G$. 
Given $x\in\mathbb{B}^3$ we denote by $G(x)=\bigcup_{\psi\in G}\psi(x)$ the \emph{orbit} of $x$. 
A connected subset $W\subset\mathbb{B}^3$ is a \emph{fundamental domain}
for the action of $G$ on $\mathbb{B}^3$ if $G(x)\cap W$ contains exactly one
element for every $x\in\mathbb{B}^3$. 
Similarly, a connected subset $D\subset\Omega$ is called \emph{fundamental
domain} for $\Omega$ if $G(x)\cap D$ contains exactly one element for every
$x\in\Omega$.
\end{defi}

We fist prove the following lemma, concerning connectedness of subsets of
fundamental domains for reflection groups.

\begin{lemma}\label{lem:connectedness}
Let $G$ be a finite group generated by reflection along planes $\Pi_1,\dotsc,\Pi_n\subset\R^3$ passing through the origin such that $W\subset\B^3$ bounded by the planes $\Pi_1,\dotsc,\Pi_n$ and $\del\B^3$ is a fundamental domain for the action of $G$ on $\B^3$. 
Let $E \subset W$ be such that $G(E)$ is path connected. 
Then, $E$ is path connected.
\end{lemma}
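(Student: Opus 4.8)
The plan is to realise $E$ as the continuous image of the path connected set $G(E)$ under a \emph{folding retraction} of the ball onto the chamber $W$. Since the reflections across $\Pi_1,\dotsc,\Pi_n$ are linear isometries of $\R^3$ fixing the origin, $G$ acts by isometries on the compact space $\overline{\B^3}$, and I take $\overline W\subset\overline{\B^3}$ to be the intersection of the closed ball with the closed half-spaces bounded by the $\Pi_i$ that contain $W$, so that $W\subseteq\overline W$. The hypothesis that $W$ is a fundamental domain bounded by these planes exhibits $(G,\{s_{\Pi_1},\dotsc,s_{\Pi_n}\})$ as a finite reflection group with $\overline W$ its closed fundamental chamber; by the classical theory of finite reflection groups the closed chamber is a \emph{strict} fundamental domain, i.e.\ every $G$-orbit in $\overline{\B^3}$ meets $\overline W$ in exactly one point. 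This lets one define $r\colon\overline{\B^3}\to\overline W$ by letting $r(x)$ be the unique point of $G(x)\cap\overline W$.

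The crucial step will be the continuity of $r$, which I would deduce from the orbit space. Let $q\colon\overline{\B^3}\to\overline{\B^3}/G$ be the quotient map; it is continuous and closed, because for closed $A$ the set $q^{-1}(q(A))=\bigcup_{\psi\in G}\psi(A)$ is a finite union of closed sets. The orbit space is compact and is metrised by $\overline d([x],[y]):=\min_{\psi\in G}\dist(x,\psi y)$, hence Hausdorff. Then $q|_{\overline W}\colon\overline W\to\overline{\B^3}/G$ is a continuous bijection (surjective since every orbit meets $\overline W$, injective since $\overline W$ is strict) from a compact space to a Hausdorff space, hence a homeomorphism, and $r=(q|_{\overline W})^{-1}\circ q$ is continuous. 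Moreover $r$ fixes $\overline W$ pointwise, since $G(x)\cap\overline W=\{x\}$ for $x\in\overline W$; in particular $r|_E=\Id_E$.

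It then remains to identify $r(G(E))$ and conclude. If $y\in G(E)$, write $y=\psi(x)$ with $x\in E\subseteq\overline W$; then $x\in G(y)\cap\overline W$, so $r(y)=x\in E$, giving $r(G(E))\subseteq E$, and combined with $r|_E=\Id_E$ this yields $r(G(E))=E$. Finally, for $a,b\in E\subseteq G(E)$, path connectedness of $G(E)$ supplies a path $\gamma\colon[0,1]\to G(E)$ from $a$ to $b$, and $r\circ\gamma$ is then a continuous path in $r(G(E))=E$ from $r(a)=a$ to $r(b)=b$, so $E$ is path connected. The one genuinely nontrivial ingredient is the continuity of the folding map $r$ — morally, that reflecting a path back into $\overline W$ across successive chamber walls produces a continuous path — which is exactly what the homeomorphism $\overline W\cong\overline{\B^3}/G$ packages; everything else is formal bookkeeping.
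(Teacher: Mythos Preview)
Your proof is correct and follows essentially the same idea as the paper: both construct the ``folding'' retraction $x\mapsto$ (unique representative of $G(x)$ in the chamber), observe that it sends $G(E)$ onto $E$, and then push a path in $G(E)$ down to $E$ once continuity of the folding map is established. The only difference is in how continuity is argued: the paper asserts a local statement (for each $x$ there is $\delta_x>0$ with $\{\tilde y: y\in B_{\delta_x}(x)\}\subset B_{\delta_x}(\tilde x)$), while you package the same fact more abstractly via the homeomorphism $\overline W\cong\overline{\B^3}/G$ coming from the compact--Hausdorff bijection $q|_{\overline W}$; your route has the advantage of making the continuity step entirely self-contained.
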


\begin{proof}
By Definition \ref{def:fundamentaldomain}, every $x\in\B^3$ has a unique $\tilde{x}\in G(x)\cap W$. 
Moreover, it follows from the definition of $W$ that for every $x\in\B^3$ there exists $\delta_x>0$ such that 
\begin{align}\label{eqn:uniquechoice}
\set{\tilde{y}\;:\; y\in B_{\delta_x}(x)\cap\B^3}\subset B_{\delta_x}(\tilde{x})\cap W.
\end{align}
Let $x_0, x_1\in E$ be arbitrary and let $\gamma\colon[0,1]\to G(E)$ be a continuous path with $\gamma(0)=x_0$ and $\gamma(1)=x_1$. 
Let $\tilde\gamma\colon[0,1]\to E$ be given by $\tilde\gamma(t)=\widetilde{\gamma(t)}$. 
Since $E\subset W$, it is clear that $\tilde{\gamma}$ is well-defined satisfying $\tilde\gamma(0)=x_0$ and $\tilde\gamma(1)=x_1$.  
Moreover, \eqref{eqn:uniquechoice} implies that $\tilde{\gamma}$ is continuous, and thus connecting $x_0$ and $x_1$ in $E$.  
\end{proof}

The following Lemma states that the surfaces satisfying the hypotheses of
Theorem~\ref{thm:tetrahedral} have fundamental domains with the same structure
as those visualised in Figures \ref{fig:tetrahedral}, \ref{fig:octahedral} and \ref{fig:icosahedral}.

\begin{lemma}\label{lem:fundamentaldomain}
Let $\Omega\subset\mathbb{B}^3$ be an embedded free boundary
minimal surface of genus $0$ which has  
tetrahedral symmetry and $b=4$ boundary components or 
octahedral symmetry and $b\in\{6,8\}$ boundary components or 
icosahedral symmetry and $b\in\{12,20,32\}$ boundary components. 
Then $\Omega$ has a simply connected fundamental domain $D$ with piecewise smooth boundary $\partial D$. 
If $b=32$ then $\partial D$ consists of five edges and five right-angled corners. 
In the other cases, $\partial D$ has four edges and four corners, three of which are right-angled. 
\end{lemma}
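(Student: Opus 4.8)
The plan is to combine three ingredients: Ketover's equivariant construction (which tells us that $\Omega$ arises by repeatedly reflecting a free boundary minimal disk $D$ across the faces of a wedge $W$ that is a fundamental domain for the relevant platonic symmetry group $G$ acting on $\B^3$), the genus-$0$ and prescribed-boundary-component hypotheses (which pin down the combinatorics of how $\partial\Omega$ meets the mirror planes), and an Euler characteristic count on $D$ itself. First I would recall that for each of these symmetry groups $G$ the standard fundamental wedge $W$ is a geodesic triangle on $\partial\B^3$ times the radial direction, bounded by three mirror planes $\Pi_1,\Pi_2,\Pi_3$ through the origin and by $\partial\B^3$; the dihedral angles of $W$ along the three edges are $\pi/p,\pi/q,\pi/2$ for the appropriate Schläfli data. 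Setting $D = \Omega\cap W$, the free boundary and equivariance conditions force $D$ to meet each mirror plane $\Pi_i$ orthogonally and to meet $\partial\B^3$ orthogonally, so $\partial D$ decomposes into arcs lying on the $\Pi_i$ ("mirror arcs") and arcs lying on $\partial\B^3$ ("boundary arcs," pieces of $\partial\Omega$), meeting at right angles wherever a mirror plane meets $\partial\B^3$.

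Next I would determine, in each case, how many boundary arcs and mirror arcs appear, and argue that $D$ is simply connected. For simple connectedness: since $\Omega$ has genus $0$ it is planar, hence so is the quotient orbifold $\Omega/G$, and $D$ is a fundamental domain for this free-ish action; the only way $D$ could fail to be simply connected is if it carried a handle or a nontrivial loop, which would contradict planarity of $\Omega$, or if $G(D)=\Omega$ glued up a sphere incorrectly — but a short Mayer--Vietoris / covering-space argument using $|G|\cdot\chi(D) = \chi(\Omega) + (\text{contributions from fixed arcs and points})$ and $\chi(\Omega)=2-b$ (genus $0$ with $b$ boundary circles: $\chi = 2 - 2\cdot 0 - b$) will pin down $\chi(D)=1$, i.e. $D$ is a disk. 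The bookkeeping here is: each of the $b$ boundary circles of $\Omega$ is cut by the mirror planes into some number of arcs distributed among the $|G|$ copies of $D$, and the genus-$0$/count-$b$ hypotheses are exactly what make this distribution the "expected" one visualised in the figures. I would run this count case by case (values $b\in\{4\}$, $\{6,8\}$, $\{12,20,32\}$ against $|G|\in\{12,24,60\}$ up to orientation-reversing doubling) to read off the number of edges and corners of $\partial D$.

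Finally I would identify the corners. A corner of $\partial D$ is either (i) a point where $\partial\Omega$ crosses a mirror plane — these are right-angled by the orthogonal-reflection condition; (ii) a point where two mirror planes meet inside $\Omega$, i.e. a point of $\Omega$ on an edge of the wedge $W$, whose angle equals the wedge's dihedral angle $\pi/p$ or $\pi/q$ and is right-angled only when that equals $\pi/2$; or (iii) a point where a mirror plane meets $\partial\B^3$ along $\partial D$, again right-angled since both $D\perp\Pi_i$ and $\Pi_i\perp\partial\B^3$. Counting these and matching against the total edge count from the previous paragraph yields: in the $b=32$ icosahedral case, $\partial D$ is a pentagon with all five corners right-angled (the extra boundary components, relative to the $b=12$ and $b=20$ cases, force $D$ to pick up one more boundary arc and hence an extra edge, all of whose new corners are of type (i) or (iii)); in every other listed case $\partial D$ is a quadrilateral, three of whose corners are right-angled (types (i) and (iii)) while the fourth, of type (ii), has the wedge's non-right dihedral angle.

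\smallskip
\emph{Main obstacle.} The delicate point is the case-by-case combinatorial bookkeeping tying the number of boundary components $b$ to the edge/corner count of $D$: one must rule out "exotic" ways the mirror planes could intersect $\partial\Omega$ (e.g. a single boundary circle meeting one mirror plane in more than the expected number of points, or a boundary circle entirely contained in a single copy of $\bar W$), and this is precisely where the genus-$0$ hypothesis together with Ketover's explicit symmetry data must be invoked — essentially, any such exotic configuration would either raise the genus or change $b$. Making this rigorous likely requires either an appeal to the known structure of Ketover's surfaces \cite[Theorem 6.1]{Ketover2016fb} or a direct nodal-domain/Euler-characteristic argument on $\Omega/G$, and I expect this to be the longest part of the proof; the orthogonality statements and simple-connectedness of $D$ are comparatively routine once the incidence data is fixed.
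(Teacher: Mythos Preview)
Your overall framework---take $D=W\cap\Omega$ for the standard reflection wedge $W$ and run an Euler-characteristic argument---matches the paper's. But two aspects of your proposal miss the mark.

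First, you open by invoking Ketover's construction to assert that $\Omega$ arises from reflecting a free boundary minimal \emph{disk}. This assumes the conclusion: the lemma is stated for \emph{any} embedded free boundary minimal surface with the given symmetry and topology, not only Ketover's examples, and the fact that $D$ is a disk is precisely what must be proved. Your later suggestion to fall back on ``the known structure of Ketover's surfaces'' to rule out exotic incidence patterns would therefore not prove the lemma as stated.

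Second, the ``main obstacle'' you identify---ruling out exotic ways the mirror planes might meet $\partial\Omega$---is dissolved by the paper's key tool, which you do not name: the Gauss--Bonnet theorem applied to $D$ with its corners. Since $D$ meets every face of $W$ orthogonally, each corner of $\partial D$ has exterior angle equal to the dihedral angle between the corresponding faces of $W$; writing $j,\ell_1,\ell_2$ for the numbers of corners with exterior angles $\tfrac{\pi}{2},\alpha_1\pi,\alpha_2\pi$ respectively, Gauss--Bonnet on $\Omega$ and on $D$ combine to give
\[
2|G|\chi(D)=|G|\tfrac{j}{2}+|G|\ell_1\alpha_1+|G|\ell_2\alpha_2+2(2-b),
\]
with $|G|\in\{24,48,120\}$. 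Together with $\chi(D)\leq 1$ (a connected surface with boundary) this is a Diophantine equation that in every case but $b=32$ has a \emph{unique} nonnegative integer solution $(\chi(D),j,\ell_1,\ell_2)$, yielding $\chi(D)=1$ and the stated corner count with no further combinatorics. For $b=32$ one spurious solution with $\chi(D)=0$ appears; the paper excludes it by the elementary observation that $|G|=120>32=b$ forces $\partial D\cap\partial\mathbb{B}^3$ to contain no closed curve, hence $j\geq 2$. No appeal to any specific construction is needed, and the case-by-case bookkeeping is a few lines of integer arithmetic rather than the delicate incidence analysis you anticipate.
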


\begin{proof}
The assumption that $\Omega$ has tetrahedral, octahedral or icosahedral symmetry
means that it is invariant under the action of the full symmetry group $G$ of a
certain platonic solid. Any such group is generated by reflections along planes through the origin. 
We can realise a fundamental domain $W$ for the action of $G$ on $\mathbb{B}^3$
as a four-sided wedge which is bounded by three symmetry planes $\Pi_1$,
$\Pi_2$, $\Pi_3$ of $\Omega$ and by $\partial\mathbb{B}^3$ as shown in
Figure~\ref{fig:tetrahedral} on the right. 
Indeed, given a platonic solid centred at the origin, let $v_1$ and $v_2$ be two
of its adjacent vertices, let $c_1=\frac{1}{2}(v_1+v_2)$ and let $c_2$ be the
center of a face adjacent to the edge between $v_1$ and $v_2$. 
Then, we can choose $\Pi_1$ as the plane through $v_1$, $v_2$ and the origin,
$\Pi_2$ as the plane through $v_1$, $c_1$ and the origin and $\Pi_3$ as the
plane through $c_1$, $c_2$ and the origin (see Figure \ref{fig:reflection}). 
In particular, $\Pi_1$ and $\Pi_3$ are orthogonal. See the classical book~\cite{Coxeter1973} for details on symmetries of platonic solids.

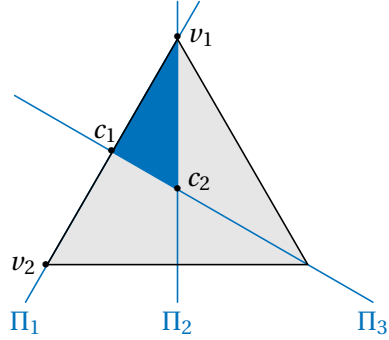
\begin{figure}
\pgfmathsetmacro{\plane}{1.5}
\pgfmathsetmacro{\size}{4cm}
\begin{tikzpicture}[line cap=round,line join=round,baseline={(0,0)},semithick]
\node[regular polygon, regular polygon sides=3, minimum size=\size,fill=black!10](P) at (0,0) {};
\fill[color={cmyk,1:magenta,0.5;cyan,1}](0,0)--(P.side 1)--(P.corner 1);
\pgfmathsetmacro{\planetop}{2.5}
\pgfmathsetmacro{\planebot}{-1.5}
\coordinate(A1)at({tan(30)*(\planebot-2)},{\planebot});
\coordinate(A2)at({tan(30)*(\planetop-2)},{\planetop});
\coordinate(B1)at({0*(\planebot-2)},{\planebot});
\coordinate(B2)at({0*(\planetop-2)},{\planetop});
\coordinate(C1)at({tan(-60)*(\planebot)},{\planebot});
\coordinate(C2)at({tan(-60)*(\planetop/2)},{\planetop/2}); 
\draw[color={cmyk,1:magenta,0.5;cyan,1}](A1)node[below]{$\Pi_1$}--(A2);
\draw[color={cmyk,1:magenta,0.5;cyan,1}](B1)node[below]{$\Pi_2$}--(B2);
\draw[color={cmyk,1:magenta,0.5;cyan,1}](C1)node[below]{$\Pi_3$}--(C2);
\draw(0,0)node{$\scriptstyle\bullet$}node[anchor=200]{$c_2$};
\draw(P.corner 1)node{$\scriptstyle\bullet$}node[right]{$v_1$};
\draw(P.corner 2)node{$\scriptstyle\bullet$}node[left]{$v_2$};
\draw(P.side 1)node{$\scriptstyle\bullet$}node[anchor=-70]{$c_1$};
\node[regular polygon, regular polygon sides=3, minimum size=\size,draw](P) at (0,0) {};
\end{tikzpicture} 
\caption{Reflection symmetries.}%
\label{fig:reflection}%
\end{figure}

The set $D\vcentcolon=W\cap\Omega$ is connected. 
This is a consequence of Lemma \ref{lem:connectedness} and the fact that
$\Omega$ is connected, being a free boundary minimal surface in the unit ball.
Moreover, $D$ meets $\partial W$ orthogonally. 
Along the planar faces of $W$, this follows from the assumption that $\Omega$ is
embedded and invariant under reflection and along $\partial\B^3\cap\partial W$
it is a direct consequence from the free boundary condition.
Hence, the curve $\partial D$ is piecewise smooth with corners where it meets
the edges of $W$. 
Moreover, the exterior angles along $\partial D$ are given by the angles between
the faces of $\partial W$. 
Let $\alpha_{1}\pi$ be the larger angle between $\Pi_1$ and $\Pi_2$ and let
$\alpha_{2}\pi$ be the larger angle between $\Pi_2$ and $\Pi_3$. 
All the other faces of $\partial W$ are pairwise orthogonal. 
Let $j,\ell_{1},\ell_{2}$ be the numbers of exterior angles along $\partial D$
with values $\frac{\pi}{2}$, $\alpha_{1}\pi$, $\alpha_{2}\pi$ respectively. 
By the argument above, these are all possible cases. 
We first observe that  
\begin{align*}
\int_\Omega K&=\abs{G}\int_{D} K, & 
\int_{\partial\Omega}\kappa&=\abs{G}\int_{\partial D}\kappa, 
\end{align*}
where we denote the Gauß curvature of a surface (here $\Omega$ or $D$) by $K$,
the geodesic curvature of its boundary by $\kappa$ and the number of elements in
the symmetry group $G$ by $\abs{G}$. 
By the Gauß--Bonnet theorem, we have the following formula for the Euler
characteristic $\chi(\Omega)$ of $\Omega$. 
\begin{align}\label{eqn:Gauss-Bonnet}
2\pi\chi(\Omega)  
&=\int_{\Omega}K+\int_{\partial \Omega}\kappa
=\abs{G}\biggl(\int_{D}K+\int_{\partial D}\kappa\biggr)
=\abs{G}\bigl(2\pi\chi(D)-j\tfrac{\pi}{2}-\ell_{1}\alpha_{1}\pi-\ell_{2}\alpha_{2}\pi\bigr).
\end{align}
Since $\Omega$ has genus $0$ and $b$ boundary components, $\chi(\Omega) = 2-b$
and equation \eqref{eqn:Gauss-Bonnet} yields
\begin{align}\label{eqn:20200228}
2\abs{G}\chi(D)&=\abs{G}\tfrac{j}{2}+\abs{G}\ell_{1}\alpha_{1}+\abs{G}\ell_{2}\alpha_{2}+2(2-b). 
\end{align}
In the case of tetrahedral symmetry we have $\abs{G}=24$ and $b=4$ as well as $\alpha_{1}=\alpha_{2}=\frac{2}{3}$. 
Simplifying equation \eqref{eqn:20200228}, we obtain 
\begin{align}\label{eqn:20200306-2251}
12\chi(D)&=3j+4(\ell_{1}+\ell_{2})-1. 
\end{align}
Any connected surface $D$ with boundary has Euler characteristic $\chi(D)\leq 1$. 
Since $j,\ell_{1},\ell_{2}$ must be nonnegative integers, the right hand side of equation \eqref{eqn:20200306-2251} is bounded from below by $-1$ and does not vanish which implies $\chi(D)=1$.  
Moreover, equation \eqref{eqn:20200306-2251} implies $j,\ell_{1},\ell_{2}\leq4$. 
By testing all combinations we obtain $j=3$ and $\ell_{1}+\ell_{2}=1$ as the only possibility. 
In particular, $D$ has $j+\ell_{1}+\ell_{2}=4$ corners and the topology of a disk as claimed.  

In the octahedral case, we have $\abs{G}=48$ and $b\in\{6,8\}$ as well as $\alpha_{1}=\frac{2}{3}$ and $\alpha_{2}=\frac{3}{4}$.   
In this case, equation \eqref{eqn:20200228} implies 
\begin{align*}
24\chi(D)&=6j+8\ell_{1}+9\ell_{2}-
\begin{cases}
2 &\text{ if $b=6$,} \\
3 &\text{ if $b=8$.}
\end{cases}
\end{align*}
As before, we conclude $\chi(D)=1$ and obtain $(j,\ell_{1},\ell_{2})=(3,1,0)$ if $b=6$ or $(j,\ell_{1},\ell_{2})=(3,0,1)$ if $b=8$.

With icosahedral symmetry, we have $\abs{G}=120$ and $b\in\{12,20,32\}$ as well as $\alpha_{1}=\frac{2}{3}$ and $\alpha_{2}=\frac{4}{5}$.   
Then, equation \eqref{eqn:20200228} implies 
\begin{align}\label{eqn:20200306-2201}
60\chi(D)&=15j+20\ell_{1}+36\ell_{2}-\begin{cases}
5 & \text{ if $b=12$,}\\
9 & \text{ if $b=20$,}\\
15& \text{ if $b=32$.}\\
\end{cases}
\end{align}
If $b\in\{12,20\}$ we obtain $\chi(D)=1$ and $(j,\ell_{1},\ell_{2})=(3,1,0)$ respectively $(j,\ell_{1},\ell_{2})=(3,0,1)$ as above. 
In the case $b=32$, equation \eqref{eqn:20200306-2201} has the solution $(j,\ell_{1},\ell_{2})=(1,0,0)$ with $\chi(D)=0$ which we need to exclude. 
Since the group order $\abs{G}=120$ exceeds the number $b=32$ of boundary components, there are no closed curves in $\partial D\cap\partial\mathbb{B}^3$. 
Consequently, and since $\Omega$ is embedded with boundary, $\partial D$ must have at least two corners on $\partial\mathbb{B}^3$ which implies $j\geq2$. 
In this case, the right hand side of \eqref{eqn:20200306-2201} is positive which implies $\chi(D)=1$. 
The equation simplifies to 
\begin{align*}
45&=15(j-2)+20\ell_1+36\ell_{2}
\end{align*}
and the only solution with integers $(j-2),\ell_{1},\ell_{2}\geq0$ is $(j,\ell_{1},\ell_{2})=(5,0,0)$. 
\end{proof}

We are now ready to prove our main result regarding free boundary minimal surfaces.
\begin{proof}[Proof of Theorem \ref{thm:tetrahedral}]
A result by McGrath \cite[Theorem 4.2]{McGrath2018} states $\sigma_1(\Omega)=1$ provided that $\Omega\subset\mathbb{B}^3$ is an embedded free boundary minimal surface which is invariant under a finite group $G$ of reflections satisfying the following two conditions. 
\begin{enumerate}
\item\label{condition:G1} The fundamental domain for the action of $G$ on $\mathbb{B}^3$ is a four-sided wedge $W$ bounded by three planes and $\partial\mathbb{B}^3$.
\item\label{condition:G2} The fundamental domain $D=W\cap\Omega$ for $\Omega$ is simply connected with boundary $\partial D$ which has at most five edges and intersects $\partial\Omega$ in a single connected curve.
\end{enumerate}

Let $D$ be the fundamental domain for $\Omega$ as given by Lemma~\ref{lem:fundamentaldomain}. 
Interpreting $D$ as free boundary minimal disk inside $W$, a result by Smyth
\cite[Lemma~1]{Smyth1984} states that the integral of the outward unit normal
vector field along $\partial D$ vanishes.
Consequently, $D$ meets all four faces of $W$ at least once. 
Hence, in the cases where $\partial D$ has exactly four edges, $\partial
D\cap\partial\Omega$ must be connected and \cite[Theorem 4.2]{McGrath2018}
applies.    

In the case $b=32$ where $\partial D$ has five edges and right angles, $\partial
D\cap\partial\Omega$ could be disconnected which would violate condition
\eqref{condition:G2}. 
We recall from the proof of Lemma~\ref{lem:fundamentaldomain} that the plane
$\Pi_2$ intersects $\Pi_1$ and $\Pi_3$ at angles different from $\frac{\pi}{2}$.

Since $\partial D$ has only right angles, it must avoid these two intersections
while still meeting the adjacent faces of $W$ (see Figure \ref{fig:icosahedral}
lower image). 
Hence, $\gamma=\partial D\cap\partial\Omega$ has indeed two connected components
$\gamma_1$ and $\gamma_2$. 
Let $e_i$ be the edge of $\partial D$ on $\Pi_i$ for $i\in\{1,2,3\}$ such that
in consecutive order 
\begin{align*}
\partial D&=e_1\cup\gamma_1\cup e_2\cup\gamma_2\cup e_3.
\end{align*}
In the following, we adapt McGrath's \cite{McGrath2018} approach to prove
$\sigma_1(\Omega)=1$ for the case at hand.  
Towards a contradiction, suppose that $\sigma_1(\Omega)<1$ and let $u$ be a
first eigenfunction for the Steklov eigenvalue problem satisfying 
\begin{align}\label{eqn:20202502}
\int_{\partial\Omega}u\,\de s&=0. 
\end{align}
Let $\CN=\{x\in\Omega\mid u(x)=0\}$ denote the \emph{nodal set} of $u$. 
As remarked in \cite{McGrath2018}, $\CN$ consists of finitely many arcs which intersect
in a finite set of points. 
By definition a \emph{nodal domain} of $u$ is a connected component of $\Omega\setminus\CN$. 
By Courant's nodal domain theorem, $u$ has exactly two nodal domains
$\CN^{\pm}\vcentcolon=\{x\in\Omega\mid \pm u(x)>0\}$, being a first non-trivial
eigenfunction.

We recall that the symmetry group $G$ of $\Omega$ is generated by reflections. 
Let $R\in G$ be any such reflection. 
According to \cite[Lemma 3.2]{McGrath2018} we have 
$u=\frac{1}{2}(u+u\circ R)$ since $\Omega$ is $R$-invariant with $\sigma_1(\Omega)<1$.

This implies that $u=u\circ \psi$ for any $\psi\in G$ which means that the two
nodal sets $\CN^{\pm}$ are invariant under the group action, i.\,e. they must
intersect every fundamental domain of $\Omega$ and still both be connected. 
Below we show that this contradicts the fact that the order of an element of the
icosahedral group is at most $10$.
 
Assumption \eqref{eqn:20202502} implies that $u$ restricted to $\gamma=\partial
D\cap\partial\Omega$ changes sign because being a Steklov eigenfunction, $u$
does not vanish on all of $\partial\Omega$. 
Consequently, an arc $\eta$ in $\CN$ either meets one connected component of
$\gamma$ or separates them by connecting two edges $e_i$ and $e_j$. 
In this case, at most ten alternating reflections on $\Pi_i$ and $\Pi_j$ close
up the curve $\eta$ and the enclosed region of $\Omega$ intersects at most ten
fundamental domains. 
However, $\Omega$ has $\abs{G}=120$ pairwise disjoint fundamental domains in total. 
This contradicts the fact that there are only two nodal domains which are
invariant under the group action. 
If the nodal line $\eta$ meets $\gamma_1$ or $\gamma_2$ then a similar
reflection argument shows that $u$ restricted to the corresponding connected
component of $\partial\Omega$ changes sign at least six times. 
Since $\Omega$ has genus~$0$, this implies that at least one of the two sets $\CN^{\pm}$ is disconnected which again contradicts \cite[Lemma 2.2]{McGrath2018}. 
This completes the proof.
\end{proof}

\appendix


\section{On the monotonicity of Steklov eigenvalues}
\label{sec:comment}

In this appendix, we elaborate on Remark \ref{rem:counter}, following
communication with Fraser and Schoen \cite{fsprivate}.
Given a compact orientable surface $\Omega$ of genus $\gamma$ with $b$ boundary components, we recall the notation from \eqref{eq:isostek} and set 
\begin{align*}
\sigma_1^*(\gamma,b)\vcentcolon=\sup_{g \in \CG(\Omega)}\Sigma_1(\Omega,g) 
\end{align*}
as in~\cite{fraschoen2}. The limit result \cite[Theorem 8.2]{fraschoen2} states that
$\sigma_1^*(0,b) \to 4 \pi$ as $b \to \infty$ and that the associated free
boundary minimal surfaces $\Omega_b$ converge to a double disk. In the proof, it
is shown that the area of $\Omega_b$ cannot concentrate near its boundary.
While this is true, a gap appears where this non-concentration phenomenon is used to deduce that all $\Omega_b$ must intersect a fixed smaller ball.
In~\cite{fraschoen2} this is used to show convergence of $\Omega_b$ to a non-trivial limit. There is another possibility: that the sequence of  maximisers $\Omega_b$ converge to the boundary $\S^2$. It is this
latter behaviour that is suggested by Theorem \ref{thm:approx} and Corollary
\ref{cor:sphere}, which leads us to state the following conjecture.

\begin{conjecture}
  There is a sequence $\set{\Omega_b : b \in \N} \subset \B^3$ of free boundary
  minimal surfaces of genus $0$ with $b$ boundary components which enjoys the
  following properties.
  \begin{enumerate}
    \item For every $b$, $\Omega_b$ maximises $\Sigma_1$ among surfaces of genus
      $0$ with $b$ boundary components.
    \item As $b \to \infty$, the measure on $\R^3$ obtained by restriction of
      the Hausdorff measure $\CH^1$ to $\del \Omega_b$ converges weak-$*$ to
      twice the measure obtained by restriction of $\CH^2$ on $\S^2$.
    \item As $b \to \infty$, $\Omega_b$ converges in the sense of varifolds to
      $\S^2$. 
  \end{enumerate}
  Furthermore, $\S^2$ is the unique limit point for $\set{\Omega_b}$ under the
  condition that they maximise $\Sigma_1$.
\end{conjecture}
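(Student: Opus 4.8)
The plan is to combine the known existence theory for $\Sigma_1$-maximal metrics with a varifold compactness argument and a sharp rigidity statement identifying the limit object, so that all four assertions follow once that limit is pinned down. First I would treat part~(1). By the existence theory for maximal metrics of Fraser--Schoen \cite{fraschoen2} and its subsequent refinements, for each $b$ the supremum $\sigma_1^*(0,b)$ is attained by a smooth metric which is induced by a free boundary minimal immersion of a genus-$0$ surface with $b$ boundary components into some Euclidean ball; that this ball can in fact be taken to be $\B^3$ is itself part of what must be proven, and here the relevant input should be that the homogenised spheres-with-holes produced by Theorem~\ref{thm:approx} (which are genuine near-maximisers) admit a realisation as free boundary minimal surfaces in $\B^3$ rather than merely an abstract one. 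Granting this, write $\Omega_b\subset\B^3$ for such a maximiser. The value $\sigma_1^*(0,b)$ is non-decreasing in $b$ (a new, small boundary component may be created by excising a tiny geodesic disk, changing $\Sigma_1$ by an arbitrarily small amount), it is bounded above by $8\pi$ by Kokarev \cite{kok}, and it is attained in the limit $b\to\infty$ upon applying Theorem~\ref{thm:approx} to $(\S^2,g_0)$ with $\beta\equiv1$, as in Corollary~\ref{cor:sphere}; hence $\sigma_1^*(0,b)\nearrow 8\pi$.

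For parts~(2) and~(3), recall that any free boundary minimal surface $\Omega\subset\B^3$ has $\CH^1(\del\Omega)=2\,\mathrm{Area}(\Omega)$ \cite[Proposition~2.4]{Li19}, and that $\sigma_1(\Omega_b)=1$ since $\Omega_b$ realises a $\Sigma_1$-maximal metric, so $\mathrm{Area}(\Omega_b)=\tfrac12\CH^1(\del\Omega_b)=\tfrac12\sigma_1^*(0,b)\to 4\pi$. By the compactness theorem for integral varifolds with free boundary, a subsequence of $\Omega_b$ converges to a free boundary stationary integral varifold $V$ in $\overline{\B^3}$ of mass $4\pi$, while the boundary measures $\CH^1|_{\del\Omega_b}$, of total mass tending to $8\pi$, converge weak-$*$ to a Radon measure $\mu$ on $\overline{\B^3}$. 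The crucial step is to identify $(V,\mu)$: the coordinate functions restricted to $\Omega_b$ are normalised Steklov eigenfunctions with eigenvalue $1$, and running the homogenisation analysis of Section~\ref{sec:homo} \emph{in reverse}, with the boundary-concentrated mass $\CH^1|_{\del\Omega_b}$ now playing the role of the converging surface measures $\de A_g$, one expects $(V,\mu,1)$ to solve a weighted Laplace-type eigenvalue problem on the support of $V$ in which $\mu$ is the weight and $1$ is a first eigenvalue. Combined with the sharpness of Kokarev's bound and with Hersch's rigidity underlying $\Lambda_1^*(\S^2)=8\pi$ (see the discussion in \cite{KNPP}), this should force $V$ to be the unit sphere with multiplicity one and $\mu=2\,\CH^2|_{\S^2}$, which is exactly~(2) and~(3). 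Part~(4) is then the statement that this holds along \emph{every} convergent subsequence of maximisers, which is precisely what the argument yields once it is made unconditional.

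I expect the main obstacle to be this identification and rigidity step. One must rule out the competing possibilities that $V$ is, say, a union of equatorial disks carrying integer multiplicities with total mass $4\pi$, or that part of the mass of $\mu$ escapes onto a set of Hausdorff dimension less than two; this is the analogue, in the reverse direction, of the ``no lost mass'' Lemma~\ref{lem:nolostmass}, but it is genuinely harder because the limiting support $V$ is not fixed in advance. Carrying it out seems to require a sharp \emph{quantitative} (stability) version of the inequality $\Sigma_1^*(M)\le\Lambda_1^*(M)$ of \cite{KS} in the genus-$0$ case, asserting that a genus-$0$ surface with $\Sigma_1$ close to $8\pi$ must be close, as a metric measure space, to the round $\S^2$. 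A further structural difficulty, already flagged in part~(1), is to show that the genus-$0$ maximiser is realised by a free boundary minimal immersion into $\B^3$ and not only into some $\B^N$ with $N>3$: the Fraser--Schoen examples, which converge instead to the double equatorial disk with $\Sigma_1\to4\pi$, show that one cannot simply invoke a previously constructed family, so a new construction — presumably a minimal realisation of the homogenised domains of Theorem~\ref{thm:approx} — appears to be needed.
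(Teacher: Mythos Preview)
The statement you are attempting to prove is a \emph{conjecture} in the paper, not a theorem: the authors state it in Appendix~\ref{sec:comment} precisely because they do not know how to prove it, and they give no proof or proof sketch. There is therefore nothing in the paper to compare your proposal against.

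That said, your outline is a reasonable and honest description of what a proof would have to accomplish, and you correctly flag the genuine obstructions. A few points deserve emphasis. First, even part~(1) is not settled: the existence of a $\Sigma_1$-maximising metric for genus~$0$ with $b$ boundary components, realised specifically as an embedded free boundary minimal surface in $\B^3$ rather than in some higher-dimensional ball, is itself open in this generality; the Fraser--Schoen existence theory does not deliver this directly, and the gap in \cite[Proposition~4.3]{fraschoen2} that the authors mention affects exactly the kind of monotonicity argument you invoke. Second, your ``reverse homogenisation'' identification of the varifold limit is the heart of the matter and is, as you say, genuinely harder than Lemma~\ref{lem:nolostmass}: in the forward direction the ambient manifold is fixed and only the holes move, whereas here the limiting support is unknown and could in principle be a multiplicity-two disk, which is precisely the competing scenario the conjecture is meant to rule out. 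The stability version of $\Sigma_1^*\le\Lambda_1^*$ you propose would indeed resolve this, but no such result is currently available. In short, your proposal is a coherent research programme rather than a proof, and the paper's authors would likely agree with your assessment of where the difficulties lie.
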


We remark that this is not in contradiction with the existence of free boundary
minimal surfaces converging to the double disk as the number of boundary
components goes to infinity, it simply means that they are not global maximisers for
$\Sigma_1$. We also remark that a part of the gap in the proof of \cite[Theorem
8.2]{fraschoen2} appears also in the monotonicity result \cite[Proposition
4.3]{fraschoen2}, stating that $\sigma_1^*(\gamma,b) < \sigma_1^*(\gamma,b+1)$.
This was also mentioned to us in \cite{fsprivate}, along with a statement that
the result still holds and that a corrigendum is in preparation.

\bibliographystyle{plain}
\bibliography{homo}
\end{document}